\numberwithin{equation}{section}
\newtheorem{thm}{Theorem}[section]
\newtheorem{cor}[thm]{Corollary}
\newtheorem{lem}[thm]{Lemma}
\newtheorem{prop}[thm]{Proposition}
\theoremstyle{definition}
\theoremstyle{remark}
\newtheorem{remark}[thm]{Remark}
\crefname{thm}{Theorem}{Theorems}
\crefname{cor}{Corollary}{Corollaries}
\crefname{lem}{Lemma}{Lemmas}
\crefname{prop}{Proposition}{Propositions}
\crefname{definition}{Definition}{Definitions}
\crefname{example}{Example}{Examples}
\crefname{claim}{Claim}{Claims}
\crefname{conjecture}{Conjecture}{Conjectures}
\crefname{remark}{Remark}{Remarks}
\crefname{figure}{Figure}{Figures}
\crefname{section}{Section}{Sections}
\crefname{subsection}{Section}{Sections}
\crefname{introthm}{Theorem}{Theorems}
\crefname{introcor}{Corollary}{Corollaries}
\crefname{introconj}{Conjecture}{Conjectures}
\def\C{{\mathbb C}}
\def\Z{{\mathbb Z}}
\def\ve{{\varepsilon}}
\newcommand\R{{\mathbb{R}}}
\newcommand\trop{{\mathrm{trop}}}
\newcommand\mg{{\mathfrak{g}}}
\newcommand\A{{\mathcal{A} }}
\newcommand\X{{\mathcal{X} }}
\newcommand\bY{{\boldsymbol{\mathcal{Y}} }}
\newfont{\bg}{cmr9 scaled\magstep4}
\newcommand{\bigzerol}{\smash{\lower1.0ex\hbox{\bg 0}}}
\newcommand\qline[2]{\draw[-,shorten >=2pt,shorten <=2pt] (#1) -- (#2);}
\newcommand\qarrow[2]{\draw[->,shorten >=2pt,shorten <=2pt] (#1) -- (#2) [thick];} 
\tikzset{
  mid arrow/.style={postaction={decorate,decoration={
        markings,
        mark=at position .5 with {\arrow[#1]{stealth}}
      }}},
}
\begin{document}
\title[Cluster realization of Weyl groups and $q$-characters]
{Cluster realization of Weyl groups and $q$-characters of quantum affine algebras}

\author[Rei Inoue]{Rei Inoue}
\address{Rei Inoue, Department of Mathematics and Informatics,
   Faculty of Science, Chiba University,
   Chiba 263-8522, Japan.}
\email{reiiy@math.s.chiba-u.ac.jp}

\date{March 8, 2020. Revised on December 18, 2020.}


\begin{abstract}
We consider an infinite quiver $Q(\mg)$ and a family of periodic quivers $Q_m(\mg)$ for a finite dimensional simple Lie algebra $\mg$ and $m \in \Z_{>1}$. 
The quiver $Q(\mg)$ is essentially same as what introduced in \cite{HL16} for the quantum affine algebra.
We construct the Weyl group $W(\mg)$ as a subgroup of the cluster modular group for $Q_m(\mg)$, in a similar way as \cite{IIO19},    
and study its applications to the $q$-characters of quantum non-twisted affine algebras $U_q(\hat{\mg})$ \cite{FR99}, and to the lattice $\mg$-Toda field theory \cite{IH00}. 
In particular, when $q$ is a root of unity, we prove that the $q$-character is invariant under the Weyl group action.
We also show that the $A$-variables for $Q(\mg)$ correspond to the $\tau$-function for the lattice $\mg$-Toda field equation.
\end{abstract}

\keywords{cluster algebras, Weyl group, $q$-character, Toda field}


\maketitle


\section{Introduction}

The {\em cluster algebra} was introduced around 2000 by Fomin and Zelevinsky \cite{FZ-CA1}. The heart of the algebra is algebraic operations called 
{\em mutations} of seeds, where a seed consists of a quiver (or a skew-symmetric matrix) and two tuples of variables ($A$-variables and $X$-variables). 
This algebra has been variously applied in mathematics and mathematical physics. One idea is to find an interesting sequence of mutations which preserves a given quiver $Q$, from which we define a rational map for the $A$- and $X$-variables. Such sequences of mutations constitute a group called the cluster modular group $\Gamma_Q$.
For example, the Laurent property of the Somos-4 sequence \cite{FZ-CA0}, the periodicity of the $T$-system and $Y$-system associated with the finite dimensional Lie algebra \cite{FZ03, K10, IIKNS, IIKKN} were proved by formulating the system with some quivers.

In \cite{IIO19}, for a symmetrizable Kac-Moody Lie algebra we constructed a family of periodic (weighted) quivers, and introduce the realization of the Weyl group as a subgroup of the cluster modular group. When the Lie algebra is classical finite type, this has application in higher Teichm\"uller space introduced by Fock and Goncharov \cite{FG03}. We showed that the $h$-periodic quiver is mutation equivalent to the quiver which gives the cluster structure for the higher Teichm\"uller space of once punctured disk with two marked points on the boundary, where $h$ is the Coxeter number of the Lie algebra. Based on a result by Goncharov and Shen in the case of $A_\ell$ \cite{GS16}, we further proved that the Weyl group action coincides with the geometric one defined in \cite{GS16} in the other classical cases.

In this paper, for a finite dimensional Lie algebra $\mg$ we introduce another family of quivers with the Weyl group realization, and study its applications.
Let $\ell$ be the rank of $\mg$, and set $S = \{1,2,\ldots,\ell \}$. We consider an infinite quiver $Q(\mg)$ and its periodic version $Q_m(\mg)$ for $m \in \Z_{>1}$. The infinite quiver $Q(\mg)$ is essentially same as what appeared in \cite{HL16}, and it was applied to study the cluster structures of representation for non-twisted quantum affine Lie algebra $U_q(\hat{\mg})$ \cite{HL16, HL16b} (see Remark \ref{rem:HL}).   
For the periodic quiver $Q_m(\mg)$, we construct sequences of mutations $R_i$ for $i \in S$ (Theorem \ref{R-action}), which generate the Weyl group $W(\mg)$ as a subgroup of the cluster modular group $\Gamma_{Q_m(\mg)}$ (Theorem \ref{thm:R-Weyl}). In fact, when $\mg$ is of simply-laced Dynkin types as $A_\ell$, $D_\ell$ and $E_{6,7,8}$, this Weyl group realization is same as what studied  in \cite{ILP16, IIO19}, whereas when $\mg$ is of non-simply-laced Dynkin type as $B_\ell$, $C_\ell$, $F_4$ and $G_2$, this gives a different cluster realization of $W(\mg)$. 
As same as in \cite{IIO19}, we show that each $R_i$ is a green sequence, and that the longest element in $W(\mg)$ gives the maximal green sequence (Theorem \ref{thm:R-Weyl}) in the sense of Keller \cite{Keller11}.

Further we present two applications of the quivers and the Weyl group realization:
the first one is to the $q$-characters for finite dimensional representations of quantum non-twisted affine algebras $U_q(\hat{\mg})$ introduced by Frenkel and Reshetikhin \cite{FR98,FR99}. The $q$-character $\chi_q$ is a ring homomorphism: 
$$
  \chi_q : \mathrm{Rep}\,U_q(\hat{\mg}) \to \mathbf{Y} := \Z[Y_{i,a_i}^{\pm 1}]_{i \in S, a_i \in \C^\times},
$$
and the image of $\chi_q$ is known to be
$$
\bigcap_{i \in S} \left(\Z[Y_{j,a}^{\pm 1}]_{j \neq i, a \in \C^\times} \otimes  \Z[Y_{i,b}(1+A_{i,bq_i}^{-1})]_{b \in \C^\times} \right),
$$
where the $A_{i,bq_i}$ are Laurent monomials in the $Y_{j,a}$. (See \S \ref{subsec:q-ch} for the precise definitions.)
For each $a \in \C^\times/q^{d\Z}$ 
we define a Poisson map from the variables $A_{i,aq_i}$ to the $X$-variables for $Q(\mg)$ (Proposition \ref{prop:PoissonaX-g}), where $d$ is defined at \eqref{eq:def-d}. The case that $q$ is a root of unity naturally corresponds to the quiver $Q_m(\mg)$. We extend the action of $W(\mg)$ on the $X$-variables to that on the variables $Y_{i,a q_i}$ (Theorem \ref{thm:WonY-g}), and prove that the $q$-character is invariant under the $W(\mg)$-action (Proposition \ref{prop:qch-weyl}).
When $q$ is generic, the $W(\mg)$-action is not well-defined, but we have an analog of $R_i$ which preserves the $q$-character (Proposition \ref{prop:qch-weyl-inf}).
The second application is to the lattice $\mg$-Toda field theory introduced in \cite{IH00}. We show that the $\tau$-function for the lattice $\mg$-Toda field equation is nothing but the $A$-variables for the infinite quiver $Q(\mg)$ (Proposition \ref{prop:Toda-A}).  
The two applications should be closely related, since the image of $\chi_q$ coincides with the intersection of the kernels of the screening operators for the $q$-deformed $\mathcal{W}$-algebras \cite{FR99} (also see the last part of \S \ref{subsec:q-ch}).  

In closing the introduction, we remark an interesting relation with the braid group action on the $\ell$-integral weight lattice introduced in \cite{CM05}. Roughly speaking, our Weyl group action on the {\it tropical} $X$-variables (Corollary \ref{cor:tropR}) corresponds to the braid group action on the $\ell$-root lattice. See Remark \ref{rem:braidacion}. It might be interesting to study this relation further.

\subsection*{Contents of the paper}

This paper is organized as follows. In \S 2, we briefly introduce the basic definitions in cluster algebras we use in this paper. We also fix the notations in Lie algebras and Weyl groups. In \S 3 we introduce the infinite quiver $Q(\mg)$ and the periodic one $Q_m(\mg)$, and study the realization of the Weyl group $W(\mg)$ as a subgroup of $\Gamma_{Q_m(\mg)}$. A remark on the case of non-twisted affine Lie algebra $\hat{\mg}$ is presented. In \S 4 and \S 5, we present the two applications of what constructed in \S 3.

\subsection*{Acknowledgement}

The author thanks Ryo Fujita, Tsukasa Ishibashi and Hironori Oya for valuable comments and discussions.  
The author is supported by JSPS KAKENHI Grant Number 16H03927, 18KK0071 and 19K03440. 
The author is grateful to anonymous referees for helpful comments and referring \cite{CM05}.

\section{Notation and Definitions}

\subsection{Seed mutation}\label{subsec:mutation}

Let $I$ be a finite set or a countably infinite set, and 
$\ve = (\ve_{ij})_{i,j \in I}$ be an integral skew-symmetric matrix.
When $I$ is infinite, we assume that for each $i \in I$ the number of $j \in I$ such that $\ve_{ij} \neq 0$ is finite. 
The matrix $\ve$ is called the \emph{exchange matrix}. 
The quiver $Q$ corresponding to $(I,\ve)$ is with the vertex set $I$, and it holds that $\ve_{ij} := \#\{\text{arrows from $i$ to $j$}\}  - \#\{\text{arrows from $j$ to $i$}\}$.

Let $\mathcal{F}$ be a field isomorphic to the field of rational functions over $\C$ in $|I|$ independent variables.
We recall the definition of the seed mutation by Fomin and Zelevinsky \cite{FZ-CA4} with the convention in \cite{FG06}. 
Let $\mathbf{X} = (X_i)_{i\in I}$ and $\mathbf{A} = (A_i)_{i\in I}$ be two tuples of algebraically independent elements in the field $\mathcal{F}$. The tuple $(Q,\mathbf{X},\mathbf{A})$ is called a \emph{seed}, and the pair $(Q,\mathbf{A})$ (resp. $(Q,\mathbf{X})$) is called an \emph{$A$-seed} (resp. an \emph{$X$-seed}). For $k \in I$, 
the mutation $\mu_k$ of the seed $\mu_k(\ve,\mathbf{X},\mathbf{A})
= (\ve', \mathbf{X}',\mathbf{A}')$ is given by 
\begin{align}\label{eq:e-mutation}
  &\ve_{ij}' = 
  \begin{cases}
    -\ve_{ij} & i=k \text{ or } j=k,
    \\
    \displaystyle{\ve_{ij} + \frac{|\ve_{ik}| \ve_{kj} + \ve_{ik} |\ve_{kj}|}{2}}
    & \text{otherwise},
  \end{cases}
  \\ \label{eq:X-mutation}
  &X_{i}' = 
  \begin{cases}
  X_k^{-1} & i=k,
  \\
  X_i (1 + X_k^{-\mathrm{sgn}(\ve_{ik})})^{-\ve_{ik}} & i \neq k,
  \end{cases}
  \\ \label{eq:A-mutation}
  &A_i' = 
  \begin{cases}
  \displaystyle{A_k^{-1} \left(\prod_{j:\ve_{kj}>0} A_j^{\ve_{kj}} 
       + \prod_{j:\ve_{kj}<0} A_j^{-\ve_{kj}} \right)} & i = k,
  \\ 
  A_i & i \neq k.
  \end{cases}
\end{align}  
It is easily checked that the obtained tuple $(\ve', \mathbf{X}',\mathbf{A}')$ is again a seed.
\\

\paragraph{\textbf{Poisson structure}}
For a seed $(\ve, \mathbf{A}, \mathbf{X})$,
let $\C(\mathbf{X})$ and $\C(\mathbf{A})$ be the rational functional fields generated by the $X$-variables and the $A$-variables respectively.
There is a log-canonical Poisson structure on $\C(\mathbf{X})$ and a closed two-form on $\C(\mathbf{A})$ compatible with seed mutation \cite{GSV03}, expressed as
\begin{align}
\label{Poisson-XA}
\{X_i,X_j\}=\ve_{ij} X_i X_j, \qquad \Omega_\A=\sum_{i,j \in I}\ve_{ij} \frac{\mathrm{d}A_i}{A_i}\wedge \frac{\mathrm{d}A_j}{A_j}. 
\end{align}
When the exchange matrix $\ve$ is non-degenerate, the form $\Omega_A$ is a symplectic form.
\\

\paragraph{\textbf{Cluster modular group}}

For a given quiver $Q$, a mutation sequence is a sequence of mutations associated with $Q$ and permutations of the vertex set $I$.
A mutation sequence is {\em trivial} if it preserves the seed.
The {\em cluster modular group} $\Gamma_Q$ is a group generated by mutation sequences which preserve $Q$, modulo trivial ones. 
\\

\paragraph{\textbf{Tropical $X$-variables}}

Let $\mathbb{P}=\mathbb{P}_\trop(u_1,u_2,\ldots,u_p) := \{\prod_{i=1}^{p} u_i^{a_i}; ~a_i \in \Z \}$ be the tropical semifield of rank $p$, equipped with the addition $\oplus$ and multiplication $\cdot$ as
$$
  \prod_{i=1}^{p} u_i^{a_i} \oplus \prod_{i=1}^{p} u_i^{b_i}
  = 
  \prod_{i=1}^{p} u_i^{\min(a_i,b_i)}, 
  \qquad
  \prod_{i=1}^{p} u_i^{a_i} \cdot \prod_{i=1}^{p} u_i^{b_i}
  =  
  \prod_{i=1}^{p} u_i^{a_i+b_i}.
$$
For an element $x = \prod_{i=1}^{p} u_i^{a_i} \in \mathbb{P}$, we define the tropical sign; $x$ is positive (resp. negative) if $a_i \geq 0$ (resp. $a_i \leq 0$) for all $i=1,\ldots,p$. We write $x>0$ (resp. $x <0$), when $x$ is positive (resp. negative). 

For a tropical $X$-seed $(Q, \mathbf{x}); ~\mathbf{x} \in \mathbb{P}^{|I|}$ and $k \in I$, the mutation $\mu_k (Q, \mathbf{x}) = (Q',\mathbf{x}')$ is given by \eqref{eq:e-mutation} and 
\begin{equation}\label{eq:trop-mutation}
x_{i}' = 
  \begin{cases}
  x_k^{-1} & i=k,
  \\
  x_i \cdot (1 \oplus x_k^{-\mathrm{sgn}(\ve_{ik})})^{-\ve_{ik}} & i \neq k.
  \end{cases}
\end{equation}

The followings are fundamental theorems which respectively state
the {\em sign coherence} of tropical $X$-variables and the {\em periodicity} of seeds. 
For a quiver $Q$ with a vertex set $I$, let $\mathbb{P}=\mathbb{P}_\trop(\mathbf{u})$ be a tropical semifield of rank $|I|$.
    
\begin{thm}[\cite{FZ-CA4,GHKK14}]
\label{thm:sign-coherence}
For any mutation sequence $\nu$ which mutates a tropical $X$-seed $(Q,\mathbf{u})$ into $(Q',\mathbf(x_i)_{i \in I})$, it holds that each $x_i \in \mathbb{P}$ is negative or positive. 
\end{thm}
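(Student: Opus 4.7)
The plan is to rephrase the claim in terms of the $c$-vectors of the target seed and then invoke one of the known constructive proofs. Applying the tropical mutation rule \eqref{eq:trop-mutation} inductively along $\nu$, each output $x_i$ is a Laurent monomial $\prod_j u_j^{c_{ji}}$ in the initial generators $\mathbf{u}=(u_i)_{i\in I}$; the assertion is therefore the sign coherence of the $c$-vectors $\mathbf{c}_i := (c_{ji})_{j \in I}$, namely that each $\mathbf{c}_i$ lies in $\Z_{\ge 0}^I$ or $\Z_{\le 0}^I$. Because $\ve$ is row-finite and $\nu$ is a finite sequence, only finitely many vertices are ever touched, so one may reduce to the case $|I| < \infty$ by restricting to the principal subquiver supporting the mutations.

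The main argument I would run is the cluster scattering diagram approach of Gross-Hacking-Keel-Kontsevich. One constructs a locally finite collection $\mathfrak{D}_Q$ of codimension-one rational polyhedral walls in $\R^I$, each decorated by a wall function in a suitable completed algebra, uniquely determined up to equivalence by the Kontsevich-Soibelman consistency of path-ordered products around loops. A subfan of $\mathfrak{D}_Q$ (the cluster complex) has its maximal chambers in bijection with seeds reachable from $(Q,\mathbf{u})$ by mutation, and the $c$-vectors of a given target chamber appear as primitive outward normals to its bounding walls. The key structural property, established during the inductive construction of $\mathfrak{D}_Q$ starting from the initial walls $\be_k^\perp$, is that every wall is supported on a hyperplane lying entirely in one of the closed half-spaces $\R_{\ge 0}^I$ or $\R_{\le 0}^I$; this half-space property transfers to the normals, giving exactly sign coherence.

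An alternative route, historically the first, is the Derksen-Weyman-Zelevinsky framework of quivers with potentials. A decorated representation attached to the initial seed mutates consistently with cluster mutation, and sign coherence of $c$-vectors then follows from the observation that the positive and negative parts of the decoration do not mix under subsequent mutations. The main obstacle along either route is not the final deduction but rather the construction and consistency of the underlying object -- the scattering diagram, respectively the mutation calculus on decorated representations -- together with its identification with seed mutation; once this infrastructure is in place, sign coherence of the $\mathbf{c}_i$ is essentially a corollary. These are the arguments of \cite{FZ-CA4,GHKK14} invoked here.
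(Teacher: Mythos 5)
The paper gives no proof of this theorem; it is imported verbatim from the cited references \cite{FZ-CA4,GHKK14}, and your proposal is an accurate summary of exactly those arguments: the reformulation as sign coherence of the $c$-vectors of the target seed, the reduction to finite rank using row-finiteness of $\ve$, and the scattering-diagram proof of Gross--Hacking--Keel--Kontsevich (with the quivers-with-potentials route as the historical alternative). This matches the paper's intent, so there is nothing further to reconcile.
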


\begin{thm}[\cite{IIKKN}]\label{thm:periodicity}
For a seed $(Q,\mathbf{X}, \mathbf{A})$, a tropical $X$-seed $(Q,\mathbf{u})$ and a mutation sequence $\nu$, the followings are equivalent:

  (i) $\nu (Q, \mathbf{X}, \mathbf{A})  = (Q, \mathbf{X}, \mathbf{A})$,
  \qquad
  (ii) $\nu (Q, \mathbf{u}) = (Q, \mathbf{u})$.
\end{thm}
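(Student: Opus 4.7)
The plan is to use the combinatorial machinery of $c$-vectors, $g$-vectors, and $F$-polynomials introduced by Fomin and Zelevinsky. The first step is to identify the tropical $X$-variables with $c$-vectors: by construction, if $\nu(Q,\mathbf{u}) = (\nu(Q),(x_i)_{i\in I})$ with $x_i = \prod_{j} u_j^{c_{ji}}$, then the matrix $(c_{ji})_{j,i \in I}$ is the $c$-matrix at the target vertex, and the mutation rule \eqref{eq:trop-mutation} recovers the standard $c$-vector recursion once the sign of each mutating $c$-vector is fixed by \cref{thm:sign-coherence}. Recall further that the recursions defining $F$-polynomials and $g$-vectors depend only on the exchange matrix $\ve$ and on the signs of the intermediate $c$-vectors.

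The direction (i)$\Rightarrow$(ii) is the easier half. Assuming $\nu(Q,\mathbf{X},\mathbf{A}) = (Q,\mathbf{X},\mathbf{A})$, the $g$-vectors of the mutated $A$-variables, which can be read off from their $\Z^I$-gradings in the principal-coefficient extension, must agree with those of the initial cluster. By the $g$-vector/$c$-vector duality (itself a consequence of sign-coherence), this forces the $c$-matrix at the target to be the identity, so $\nu(Q,\mathbf{u}) = (Q,\mathbf{u})$.

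For the substantial direction (ii)$\Rightarrow$(i), assume $\nu(Q,\mathbf{u}) = (Q,\mathbf{u})$. Then $\nu(Q) = Q$, the $c$-matrix at the target is the identity, and by \cref{thm:sign-coherence} the sign of every intermediate $c$-vector along $\nu$ is well-defined. The mutation recursion for $F$-polynomials depends only on this sign sequence together with $\ve$, so the $F$-polynomials at the target equal their initial values (which are all $1$); the same holds for the $g$-matrix. Applying the Fomin--Zelevinsky separation formulas, which express $\nu(\mathbf{A})$ and $\nu(\mathbf{X})$ as explicit rational expressions in $\mathbf{A}$, $\mathbf{X}$, the $g$-vectors, and the $F$-polynomials at the target, yields $\nu(\mathbf{A},\mathbf{X}) = (\mathbf{A},\mathbf{X})$.

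The main obstacle is that the entire strategy hinges on sign-coherence, a deep input due to \cite{FZ-CA4,GHKK14} (restated here as \cref{thm:sign-coherence}); once sign-coherence is in hand, the argument reduces to the routine, but nontrivial, bookkeeping of $c$- and $g$-vector recursions and of the separation formulas, so in practice one would simply quote the result from \cite{IIKKN}.
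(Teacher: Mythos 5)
First, note that the paper does not actually prove \cref{thm:periodicity}: it is imported verbatim from \cite{IIKKN}, so there is no in-paper argument to compare yours against. Your sketch follows the standard modern strategy (identify the tropical $X$-variables with the columns of the $c$-matrix, reduce both directions to statements about the $c$-matrix, $g$-matrix and $F$-polynomials via the Fomin--Zelevinsky separation formulas, with sign-coherence as the external input), and that architecture is the right one.

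However, the pivotal step in the direction (ii)$\Rightarrow$(i) is a non sequitur as written. From ``the $F$-polynomial recursion depends only on this sign sequence together with $\ve$'' you conclude ``so the $F$-polynomials at the target equal their initial values.'' Determinism of the recursion along the sequence does not give periodicity: the $c$-matrix returning to the identity at the \emph{endpoint} says nothing about the intermediate history through which the $F$-polynomials are accumulated, so no amount of bookkeeping of the recursion alone yields $F_{j;t}=1$. What is actually needed is the detropicalization content of the theorem: (a) $C_t=I$ together with the return of the exchange matrix forces $G_t=I$ (the tropical duality of Nakanishi--Zelevinsky, which does follow from sign-coherence), and then (b) $G_t=I$ forces $A_{j;t}=A_j$ and hence $F_{j;t}=1$, i.e.\ cluster variables are separated by their $g$-vectors. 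Step (b) is the genuinely hard part; it is not a consequence of sign-coherence plus recursion bookkeeping, but requires the scattering-diagram/theta-basis machinery of \cite{GHKK14} or, in the setting of \cite{IIKKN}, a $2$-Calabi--Yau categorification. A milder version of the same issue affects your (i)$\Rightarrow$(ii): hypothesis (i) concerns the given seed, not its principal-coefficient extension, so ``reading off'' the $g$-vectors already presupposes that the $g$-vector and $F$-polynomial are recoverable from the coefficient-free Laurent expansion. Your closing remark that in practice one quotes \cite{IIKKN} is exactly what the paper does; for a self-contained argument the missing ingredient to cite or prove is step (b).
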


For a quiver $Q$ of a vertex set $I$, we define a set $\mathcal{X}_Q(\mathbb{P}) := \{\mathbf{x} = (x_i)_{i \in I} \in \mathbb{P}^{|I|} \}$ where the cluster modular group $\Gamma_Q$ acts. We write $\mathcal{X}_Q^+(\mathbb{P})$ (resp. $\mathcal{X}_Q^-(\mathbb{P})$) for the subset of $\mathcal{X}_Q(\mathbb{P})$ consisting of positive (resp. negative) $\mathbf{x}$, i. e., the tropical sign of all $x_i$ of $\mathbf{x}$ is positive (resp. negative).
\\

\paragraph{\textbf{Green and Maximal green sequences}}

For a quiver $Q$, we again consider the tropical semifield $\mathbb{P}=\mathbb{P}_\trop(\mathbf{u})$ of rank $|I|$. 
We say that a sequence $\mathbf{i} = (i_1,i_2,\ldots,i_N)$ in $I$ is {\it green}, if
in the sequence of seeds 
$$
(Q[0],\mathbf{x}[0]) := (Q,\mathbf{u}) 
\stackrel{\mu_{i_1}}{\longmapsto} (Q[1],\mathbf{x}[1])
\stackrel{\mu_{i_2}}{\longmapsto} (Q[2],\mathbf{x}[2])
\stackrel{\mu_{i_3}}{\longmapsto} \cdots
\stackrel{\mu_{i_N}}{\longmapsto} (Q[N],\mathbf{x}[N]),
$$
it holds that $x[k]_{i_{k+1}} > 0$ for $k=0,1,\ldots,N-1$.
We say that the sequence $\mathbf{i}$ is {\it maximal green} 
if $\mathbf{i}$ is green and $\mathbf{x}[N] \in \mathcal{X}_Q^-(\mathbb{P})$.
These notions are essentially the same as the original ones in \cite{Keller11}.


\subsection{Lie algebras and Weyl groups}\label{subsec:Lie-alg}

Let $\mg$ be a finite-dimensional simple Lie algebra of rank $\ell$ and over $\C$. Define its rank set $S = \{1,2,\ldots,\ell \}$. Let $\mathfrak{h}$ be the Cartan subalgebra of $\mg$.
For $i \in S$, we write $\alpha_i$ for the $i$th simple root, and $\omega_i$ for the $i$th fundamental weight.
The Cartan matrix  
$\mathbf{C} = (C_{ij})_{i,j \in S}$
is given by 
$$
C_{ij} = 2 \, \frac{(\alpha_i \,,\,\alpha_j)}{(\alpha_i\,,\,\alpha_i)},
$$
where $( ~~ ,~~ )$ is an inner product. 
We  define
\begin{equation*}
  d_i = \frac{1}{2} ( \alpha_i \,,\, \alpha_i) ,
\end{equation*}
and a diagonal matrix $\mathbf{D}= \text{diag}(d_1, \cdots, d_\ell)$.
We write $d$ and $d'$ for the minimum and maximal value of $d_i$;
\begin{align}\label{eq:def-d}
d= \min\{ d_i ; ~i \in S\}, \quad d'= \max\{ d_i ; ~i \in S \}.
\end{align}
See Figure \ref{Dynkin-diagrams} for the definition of Dynkin diagrams in this paper.
We define a symmetrized Cartan matrix $\mathbf{B} = (B_{ij})_{1 \leq i,j \leq l}$ by
\begin{equation}
  \label{Cartan}
  \mathbf{B} 
  =
  \mathbf{D} \mathbf{C}.
  \qquad 
\end{equation}
We note that the matrix $\mathbf{D}$ becomes an identity matrix when $\mg$ 
has a simply-laced Dynkin diagram.

The Weyl group $W(\mg)$ associated with $\mg$ is group of the following presentation:
$$
W(\mg)=\langle r_i; ~i \in S \mid (r_i r_j)^{m_{ij}}=1 ~(i.j\in S) \rangle.
$$
Here $(m_{ij})_{i,j \in S}$ is a symmetric matrix satisfying $m_{ii}=1$ for all $i \in S$, and 
\[
\begin{tabular}{rccccc}
$C_{ij}C_{ji}:$ & $0$ & $1$ & $2$ & $3$ & $\geq 4$ \\
$m_{ij}:$         & $2$ & $3$ & $4$ & $6$ & $\infty$
\end{tabular}
\]
for $i \neq j$. Note that the case that $C_{ij} C_{ji} \geq 4$ does not appear when $\mg$ is finite. 
 
\begin{figure}[ht]
\begin{tikzpicture}
\begin{scope}[>=latex]
\draw (0,9) node{$A_\ell:$};
\draw (1,9) circle(2pt) coordinate(A1) node[above]{$1$};
\draw (2.5,9) circle(2pt) coordinate(A2) node[above]{$2$};
\draw (4,9) circle(2pt) coordinate(A3) node[above]{$3$};
\draw (8,9) circle(2pt) coordinate(A4) node[above]{$\ell-1$};
\draw (9.5,9) circle(2pt) coordinate(A5) node[above]{$\ell$};
\qline{A2}{A1}
\qline{A3}{A2}
\draw[-,shorten >=2pt,shorten <=2pt] (5.5,9) -- (A3) [thick];
\draw (6,9) node{$\dots$};
\draw[-,shorten >=2pt,shorten <=2pt] (A4) -- (6.5,9) [thick];
\qline{A5}{A4}

\draw (0,7.5) node{$B_\ell:$};
\draw (1,7.5) circle(2pt) coordinate(B1) node[above]{$1$};
\draw (2.5,7.5) circle(2pt) coordinate(B2) node[above]{$2$};
\draw (4,7.5) circle(2pt) coordinate(B3) node[above]{$3$};
\draw (8,7.5) circle(2pt) coordinate(B4) node[above]{$\ell-1$};
\draw (9.5,7.5) circle(2pt) coordinate(B5) node[above]{$\ell$};
\qline{B2}{B1}
\qline{B3}{B2}
\draw[-,shorten >=2pt,shorten <=2pt] (5.5,7.5) -- (B3);
\draw (6,7.5) node{$\dots$};
\draw[-,shorten >=2pt,shorten <=2pt] (B4) -- (6.5,7.5);
\draw[-,shorten >=2pt,shorten <=2pt] (8,7.475) -- (9.5,7.475);
\draw[-,shorten >=2pt,shorten <=2pt] (8,7.525) -- (9.5,7.525);
\draw (8.75,7.5) node{$>$};

\draw (0,6) node{$C_\ell:$};
\draw (1,6) circle(2pt) coordinate(C1) node[above]{$1$};
\draw (2.5,6) circle(2pt) coordinate(C2) node[above]{$2$};
\draw (4,6) circle(2pt) coordinate(C3) node[above]{$3$};
\draw (8,6) circle(2pt) coordinate(C4) node[above]{$\ell-1$};
\draw (9.5,6) circle(2pt) coordinate(C5) node[above]{$\ell$};
\qline{C2}{C1}
\qline{C3}{C2}
\draw[-,shorten >=2pt,shorten <=2pt] (5.5,6) -- (C3);
\draw (6,6) node{$\dots$};
\draw[-,shorten >=2pt,shorten <=2pt] (C4) -- (6.5,6);
\draw[-,shorten >=2pt,shorten <=2pt] (8,5.975) -- (9.5,5.975);
\draw[-,shorten >=2pt,shorten <=2pt] (8,6.025) -- (9.5,6.025);
\draw (8.75,6) node{$<$};

\draw (-0.2,4.5) node{$D_{\ell \geq 4}:$};
\draw (1,4.5) circle(2pt) coordinate(D1) node[above]{$1$};
\draw (2.5,4.5) circle(2pt) coordinate(D2) node[above]{$2$};
\draw (4,4.5) circle(2pt) coordinate(D3) node[above]{$3$};
\draw (8,4.5) circle(2pt) coordinate(D4) node[above]{$\ell-2$};
\draw (9.5,4.5) circle(2pt) coordinate(D5) node[above]{$\ell-1$};
\draw (8,3) circle(2pt) coordinate(D6) node[right]{$\ell$};
\qline{D2}{D1}
\qline{D3}{D2}
\draw[-,shorten >=2pt,shorten <=2pt] (5.5,4.5) -- (D3);
\draw (6,4.5) node{$\dots$};
\draw[-,shorten >=2pt,shorten <=2pt] (D4) -- (6.5,4.5);
\qline{D5}{D4}
\qline{D6}{D4}

\draw (-0.4,2) node{$E_{\ell=6,7,8}:$}; 
\draw (1,2) circle(2pt) coordinate(E1) node[above]{$1$};
\draw (2.5,2) circle(2pt) coordinate(E2) node[above]{$2$};
\draw (4,2) circle(2pt) coordinate(E3) node[above]{$3$};
\draw (5.5,2) circle(2pt) coordinate(E4) node[above]{$5$};
\draw (8,2) circle(2pt) coordinate(E5) node[above]{$\ell-1$};
\draw (9.5,2) circle(2pt) coordinate(E6) node[above]{$\ell$};
\draw (4,0.5) circle(2pt) coordinate(E7) node[right]{$4$};
\qline{E2}{E1}
\qline{E3}{E2}
\qline{E4}{E3}
\draw (6,2) node{$\dots$};
\draw[-,shorten >=2pt,shorten <=2pt] (E5) -- (6.5,2);
\qline{E6}{E5}
\qline{E7}{E3}

\draw (0,-0.5) node{$F_4:$};
\draw (1,-0.5) circle(2pt) coordinate(F1) node[above]{$1$};
\draw (2.5,-0.5) circle(2pt) coordinate(F2) node[above]{$2$};
\draw (4,-0.5) circle(2pt) coordinate(F3) node[above]{$3$};
\draw (5.5,-0.5) circle(2pt) coordinate(F4) node[above]{$4$};
\qline{F2}{F1}
\qline{F4}{F3}
\draw[-,shorten >=2pt,shorten <=2pt] (2.5,-0.475) -- (4,-0.475);
\draw[-,shorten >=2pt,shorten <=2pt] (2.5,-0.525) -- (4,-0.525);
\draw (3.25,-0.5) node{$>$};

\draw (7,-0.5) node{$G_2:$};
\draw (8,-0.5) circle(2pt) coordinate(G1) node[above]{$1$};
\draw (9.5,-0.5) circle(2pt) coordinate(G2) node[above]{$2$};
\qline{G2}{G1}
\draw[-,shorten >=2pt,shorten <=2pt] (8,-0.45) -- (9.5,-0.45); 
\draw[-,shorten >=2pt,shorten <=2pt] (8,-0.55) -- (9.5,-0.55); 
\draw (8.75,-0.5) node{$<$};

\end{scope}
\end{tikzpicture}
\caption{Dynkin diagrams for $\mg$}
\label{Dynkin-diagrams}
\end{figure}

\section{Cluster realization of Weyl groups}

\subsection{The infinite quiver $Q(\mg)$}

For each $\mg$ we define an infinite quiver $Q(\mg)$ of a vertex set $I = \{v^i_n; ~i \in S, n \in d \Z \}$.
\\

\paragraph{\textbf{The case of $A_\ell$, $D_\ell$, $E_{6,7,8}$}}
For $\mg$ of simply-laced Dynkin diagram, i.e., $\mathbf{D} = \mathbb{I}_\ell$, we set a Dynkin quiver $C(\mg)$ as Figure \ref{Dynkin-quivers}, and write $\ve = (\ve_{ij})_{i,j \in S}$ for the corresponding exchange matrix.
We define an infinite quiver $Q(\mg)$ of a vertex set $I = \{v_n^i;~ i \in S, n \in \Z \}$ in the following way:
\begin{itemize}
\item 
we have an arrow $v_n^i \to v_{n+1}^{i}$,
\item
if $\ve_{ij} = 1$, we have an arrow $v_n^i \to v_n^{j}$ and an arrow $v_{n+1}^j \to v_n^{i}$.
\end{itemize}
See Figure \ref{TQAn} for the quiver $Q(A_\ell)$.
\\

\begin{figure}[ht]
\begin{tikzpicture}
\begin{scope}[>=latex]
\draw (0,6) node{$A_\ell:$};
\draw (1,6) circle(2pt) coordinate(A1) node[above]{$1$};
\draw (2.5,6) circle(2pt) coordinate(A2) node[above]{$2$};
\draw (4,6) circle(2pt) coordinate(A3) node[above]{$3$};
\draw (8,6) circle(2pt) coordinate(A4) node[above]{$\ell-1$};
\draw (9.5,6) circle(2pt) coordinate(A5) node[above]{$\ell$};
\qarrow{A2}{A1}
\qarrow{A3}{A2}
\draw[->,shorten >=2pt,shorten <=2pt] (5.5,6) -- (A3) [thick];
\draw (6,6) node{$\dots$};
\draw[->,shorten >=2pt,shorten <=2pt] (A4) -- (6.5,6) [thick];
\qarrow{A5}{A4}

\draw (-0.2,4.5) node{$D_{s \geq 4}:$};
\draw (1,4.5) circle(2pt) coordinate(D1) node[above]{$1$};
\draw (2.5,4.5) circle(2pt) coordinate(D2) node[above]{$2$};
\draw (4,4.5) circle(2pt) coordinate(D3) node[above]{$3$};
\draw (8,4.5) circle(2pt) coordinate(D4) node[above]{$\ell-2$};
\draw (9.5,4.5) circle(2pt) coordinate(D5) node[above]{$\ell-1$};
\draw (8,3) circle(2pt) coordinate(D6) node[right]{$\ell$};
\qarrow{D2}{D1}
\qarrow{D3}{D2}
\draw[->,shorten >=2pt,shorten <=2pt] (5.5,4.5) -- (D3) [thick];
\draw (6,4.5) node{$\dots$};
\draw[->,shorten >=2pt,shorten <=2pt] (D4) -- (6.5,4.5) [thick];
\qarrow{D5}{D4}
\qarrow{D6}{D4}

\draw (-0.4,2) node{$E_{s=6,7,8}:$}; 
\draw (1,2) circle(2pt) coordinate(E1) node[above]{$1$};
\draw (2.5,2) circle(2pt) coordinate(E2) node[above]{$2$};
\draw (4,2) circle(2pt) coordinate(E3) node[above]{$3$};
\draw (5.5,2) circle(2pt) coordinate(E4) node[above]{$5$};
\draw (8,2) circle(2pt) coordinate(E5) node[above]{$\ell-1$};
\draw (9.5,2) circle(2pt) coordinate(E6) node[above]{$\ell$};
\draw (4,0.5) circle(2pt) coordinate(E7) node[right]{$4$};
\qarrow{E2}{E1}
\qarrow{E3}{E2}
\qarrow{E4}{E3}
\draw (6,2) node{$\dots$};
\draw[->,shorten >=2pt,shorten <=2pt] (E5) -- (6.5,2) [thick];
\qarrow{E6}{E5}
\qarrow{E7}{E3}

\end{scope}
\end{tikzpicture}
\caption{Dynkin quivers $C(\mathfrak{g})$ for $\mathfrak{g} = A_\ell, D_\ell$ and $E_{6,7,8}$}
\label{Dynkin-quivers}
\end{figure}

\begin{figure}[ht]
\begin{tikzpicture}
\begin{scope}[>=latex]
\draw (0,8.6) node{$\vdots$};
\draw (2,8.6) node{$\vdots$};
\draw (4,8.6) node{$\vdots$};
\draw (6,8.6) node{$\vdots$};

\draw (0,8) circle(2pt) coordinate(A1) node[above left]{$v^1_{n-1}$};
\draw (2,8) circle(2pt) coordinate(A2) node[above left]{$v^2_{n-1}$};
\draw (4,8) circle(2pt) coordinate(A3) node[above left]{$\cdots$};
\draw (6,8) circle(2pt) coordinate(A4) node[above left]{$v^\ell_{n-1}$};
\qarrow{A2}{A1}
\qarrow{A3}{A2}
\qarrow{A4}{A3}
\draw (0,6) circle(2pt) coordinate(B1) node[above left]{$v^1_n$};
\draw (2,6) circle(2pt) coordinate(B2) node[above left]{$v^2_n$};
\draw (4,6) circle(2pt) coordinate(B3) node[above left]{$\cdots$};
\draw (6,6) circle(2pt) coordinate(B4) node[above left]{$v^\ell_n$};
\qarrow{B2}{B1}
\qarrow{B3}{B2}
\qarrow{B4}{B3}
\qarrow{A1}{B1}
\qarrow{A2}{B2}
\qarrow{A3}{B3}
\qarrow{A4}{B4}
\qarrow{B1}{A2}
\qarrow{B2}{A3}
\qarrow{B3}{A4}
\draw (0,4) circle(2pt) coordinate(C1) node[above left]{$v^1_{n+1}$};
\draw (2,4) circle(2pt) coordinate(C2) node[above left]{$v^2_{n+1}$};
\draw (4,4) circle(2pt) coordinate(C3) node[above left]{$\cdots$};
\draw (6,4) circle(2pt) coordinate(C4) node[above left]{$v^\ell_{n+1}$};
\qarrow{C2}{C1}
\qarrow{C3}{C2}
\qarrow{C4}{C3}
\qarrow{B1}{C1}
\qarrow{B2}{C2}
\qarrow{B3}{C3}
\qarrow{B4}{C4}
\qarrow{C1}{B2}
\qarrow{C2}{B3}
\qarrow{C3}{B4}

\draw (0,3.6) node{$\vdots$};
\draw (2,3.6) node{$\vdots$};
\draw (4,3.6) node{$\vdots$};
\draw (6,3.6) node{$\vdots$};

\end{scope}
\end{tikzpicture}
\caption{The quiver $Q(A_\ell)$.}
\label{TQAn}
\end{figure}

\paragraph{\textbf{The case of $B_\ell$}}
We have $\mathbf{D}=(\underbrace{1,\ldots,1}_{\ell-1},\frac{1}{2})$, thus $d = \frac{1}{2}$. 
We define $Q(B_\ell)$ of a vertex set $I = \{v_n^i;~ i \in S, n \in \Z/2 \}$ in the following way: 
\begin{itemize}
\item 
prepare two copies of $Q(A_{\ell-1})$, where the vertex set of the first and second copies are respectively $I_1 = \{v_n^i; i \in S \setminus \{\ell\}, n \in \Z \}$ and $I_2 = \{v_n^i; i \in S \setminus \{\ell \}, n \in \Z +\frac{1}{2}\}$. 
Also prepare vertices $v_n^\ell$ for $n \in \Z/2$.
\item
we have an arrow $v_n^\ell \to v_{n+\frac{1}{2}}^{\ell}$, for $n \in \Z/2$, 
\item
we have an arrow $v_n^\ell \to v_{n-\frac{1}{2}}^{\ell-1}$ and an arrow
$v_{n+\frac{1}{2}}^{\ell-1} \to v_n^{\ell}$, for $n \in \Z/2$. 
\end{itemize}
See Figure \ref{TQB4} for the quiver $Q(B_4)$.
\\

\begin{figure}[ht]
\begin{tikzpicture}
\begin{scope}[>=latex]
\draw (0,8.6) node{$\vdots$};
\draw (2,8.6) node{$\vdots$};
\draw (4,8.6) node{$\vdots$};
\draw (6,8.6) node{$\vdots$};

\draw (0,8) circle(2pt) coordinate(A1) node[above left]{$v^1_{n-1}$};
\draw (2,8) circle(2pt) coordinate(A2) node[above left]{$v^2_{n-1}$};
\draw (4,8) circle(2pt) coordinate(A3) node[above left]{$v^3_{n-1}$};
\draw (6,8) circle(2pt) coordinate(A4) node[left]{$v^4_{n-1}$};
\qarrow{A2}{A1}
\qarrow{A3}{A2}
\draw (0,6) circle(2pt) coordinate(B1) node[above left]{$v^1_n$};
\draw (2,6) circle(2pt) coordinate(B2) node[above left]{$v^2_n$};
\draw (4,6) circle(2pt) coordinate(B3) node[above left]{$v^3_n$};
\draw (6,6) circle(2pt) coordinate(B4) node[left]{$v^4_n$};
\qarrow{B2}{B1}
\qarrow{B3}{B2}
%
\qarrow{A1}{B1}
\qarrow{A2}{B2}
\qarrow{A3}{B3}
%
\qarrow{B1}{A2}
\qarrow{B2}{A3}
\draw (0,4) circle(2pt) coordinate(C1) node[above left]{$v^1_{n+1}$};
\draw (2,4) circle(2pt) coordinate(C2) node[above left]{$v^2_{n+1}$};
\draw (4,4) circle(2pt) coordinate(C3) node[above left]{$v^3_{n+1}$};
\draw (6,4) circle(2pt) coordinate(C4) node[left]{$v^4_{n+1}$};
\qarrow{C2}{C1}
\qarrow{C3}{C2}
%
\qarrow{B1}{C1}
\qarrow{B2}{C2}
\qarrow{B3}{C3}
%
\qarrow{C1}{B2}
\qarrow{C2}{B3}

\draw (0,3.6) node{$\vdots$};
\draw (2,3.6) node{$\vdots$};
\draw (4,3.6) node{$\vdots$};


\draw (8,7.6) node{$\vdots$};
\draw (10,7.6) node{$\vdots$};
\draw (12,7.6) node{$\vdots$};

\draw (6,7) circle(2pt) coordinate(A11) node[right]{$v^4_{n-\frac{1}{2}}$};
\draw (8,7) circle(2pt) coordinate(A21) node[above right]{$v^3_{n-\frac{1}{2}}$};
\draw (10,7) circle(2pt) coordinate(A31) node[above right]{$v^2_{n-\frac{1}{2}}$};
\draw (12,7) circle(2pt) coordinate(A41) node[above right]{$v^1_{n-\frac{1}{2}}$};
\qarrow{A21}{A31}
\qarrow{A31}{A41}
\draw (6,5) circle(2pt) coordinate(B11) node[right]{$v^4_{n+\frac{1}{2}}$};
\draw (8,5) circle(2pt) coordinate(B21) node[above right]{$v^3_{n+\frac{1}{2}}$};
\draw (10,5) circle(2pt) coordinate(B31) node[above right]{$v^2_{n+\frac{1}{2}}$};
\draw (12,5) circle(2pt) coordinate(B41) node[above right]{$v^s_{n+\frac{1}{2}}$};
\qarrow{B21}{B31}
\qarrow{B31}{B41}
\qarrow{A21}{B21}
\qarrow{A31}{B31}
\qarrow{A41}{B41}
\qarrow{A4}{A11}
\qarrow{A11}{B4}
\qarrow{B4}{B11}
\qarrow{B11}{C4}
%
\qarrow{B31}{A21}
\qarrow{B41}{A31}
\draw (6,3) circle(2pt) coordinate(C11) node[right]{$v^4_{n+\frac{3}{2}}$};
\draw (8,3) circle(2pt) coordinate(C21) node[above right]{$v^3_{n+\frac{3}{2}}$};
\draw (10,3) circle(2pt) coordinate(C31) node[above right]{$v^2_{n+\frac{3}{2}}$};
\draw (12,3) circle(2pt) coordinate(C41) node[above right]{$v^1_{n+\frac{3}{2}}$};
\qarrow{C21}{C31}
\qarrow{C31}{C41}
\qarrow{C4}{C11}

\qarrow{B21}{C21}
\qarrow{B31}{C31}
\qarrow{B41}{C41}
%
\qarrow{C31}{B21}
\qarrow{C41}{B31}

\qarrow{C11}{C3}
\qarrow{C3}{B11}
\qarrow{B11}{B3}
\qarrow{B3}{A11}
\qarrow{A11}{A3}

\qarrow{C21}{C4}
\qarrow{C4}{B21}
\qarrow{B21}{B4}
\qarrow{B4}{A21}
\qarrow{A21}{A4}

\draw (6,2.6) node{$\vdots$};
\draw (8,2.6) node{$\vdots$};
\draw (10,2.6) node{$\vdots$};
\draw (12,2.6) node{$\vdots$};

\end{scope}
\end{tikzpicture}
\caption{The quiver $Q(B_4)$.}
\label{TQB4}
\end{figure}

\paragraph{\textbf{The case of $C_\ell$}}
We have $\mathbf{D}=(\underbrace{1,\ldots,1}_{\ell-1},2)$, thus $d = 1$. 
We define $Q(C_\ell)$ of a vertex set $I = \{v_n^i;~ i \in S, n \in \Z \}$ in the following way: 
\begin{itemize}
\item 
prepare the quiver $Q(A_{\ell-1})$ and vertices $v_n^\ell$ for $n \in \Z$,
\item
we have an arrow $v_n^\ell \to v_{n+2}^{\ell}$, for $n \in \Z$, 
\item
we have an arrow $v_n^\ell \to v_n^{\ell-1}$ and an arrow
$v_{n+2}^{\ell-1} \to v_n^{\ell}$, for $n \in \Z$. 
\end{itemize}
See Figure \ref{TQC4} for the quiver $Q(C_4)$.
\\

\begin{figure}[ht]
\begin{tikzpicture}
\begin{scope}[>=latex]
\draw (0,8.6) node{$\vdots$};
\draw (2,8.6) node{$\vdots$};
\draw (4,8.6) node{$\vdots$};
\draw (6,10.6) node{$\vdots$};
\draw (7,8.6) node{$\vdots$};

\draw (6,10) circle(2pt) coordinate(Z4) node[right]{$v^4_{n-2}$};

\draw (0,8) circle(2pt) coordinate(A1) node[above left]{$v^1_{n-1}$};
\draw (2,8) circle(2pt) coordinate(A2) node[above left]{$v^2_{n-1}$};
\draw (4,8) circle(2pt) coordinate(A3) node[above left]{$v^3_{n-1}$};
\draw (7,8) circle(2pt) coordinate(A4) node[right]{$v^4_{n-1}$};
\qarrow{A2}{A1}
\qarrow{A3}{A2}
{\color{blue}
\qarrow{A4}{A3}
}
\draw (0,6) circle(2pt) coordinate(B1) node[above left]{$v^1_n$};
\draw (2,6) circle(2pt) coordinate(B2) node[above left]{$v^2_n$};
\draw (4,6) circle(2pt) coordinate(B3) node[above left]{$v^3_n$};
\draw (6,6) circle(2pt) coordinate(B4) node[right]{$v^4_n$};
\qarrow{B2}{B1}
\qarrow{B3}{B2}
\qarrow{B4}{B3}
\qarrow{A1}{B1}
\qarrow{A2}{B2}
\qarrow{A3}{B3}
\qarrow{Z4}{B4}
\qarrow{B1}{A2}
\qarrow{B2}{A3}
\qarrow{B3}{Z4}
\draw (0,4) circle(2pt) coordinate(C1) node[above left]{$v^1_{n+1}$};
\draw (2,4) circle(2pt) coordinate(C2) node[above left]{$v^2_{n+1}$};
\draw (4,4) circle(2pt) coordinate(C3) node[above left]{$v^3_{n+1}$};
\draw (7,4) circle(2pt) coordinate(C4) node[right]{$v^4_{n+1}$};
\qarrow{C2}{C1}
\qarrow{C3}{C2}
{\color{blue}
\qarrow{C4}{C3}
}
\qarrow{B1}{C1}
\qarrow{B2}{C2}
\qarrow{B3}{C3}
%
\qarrow{C1}{B2}
\qarrow{C2}{B3}
{\color{blue}
\qarrow{C3}{A4}
}

\draw (0,2) circle(2pt) coordinate(D1) node[above left]{$v^1_{n+2}$};
\draw (2,2) circle(2pt) coordinate(D2) node[above left]{$v^2_{n+2}$};
\draw (4,2) circle(2pt) coordinate(D3) node[above left]{$v^3_{n+2}$};
\draw (6,2) circle(2pt) coordinate(D4) node[right]{$v^4_{n+2}$};
\qarrow{D2}{D1}
\qarrow{D3}{D2}
\qarrow{D4}{D3}

\qarrow{C1}{D1}
\qarrow{C2}{D2}
\qarrow{C3}{D3}
%
\qarrow{D1}{C2}
\qarrow{D2}{C3}
\qarrow{D3}{B4}
\draw (0,0) circle(2pt) coordinate(E1) node[above left]{$v^1_{n+3}$};
\draw (2,0) circle(2pt) coordinate(E2) node[above left]{$v^2_{n+3}$};
\draw (4,0) circle(2pt) coordinate(E3) node[above left]{$v^3_{n+3}$};
\draw (7,0) circle(2pt) coordinate(E4) node[right]{$v^4_{n+3}$};
\qarrow{E2}{E1}
\qarrow{E3}{E2}
{\color{blue}
\qarrow{E4}{E3}
}
\qarrow{D1}{E1}
\qarrow{D2}{E2}
\qarrow{D3}{E3}
\qarrow{E1}{D2}
\qarrow{E2}{D3}
{\color{blue}
\qarrow{E3}{C4}
}

\draw (6,-2) circle(2pt) coordinate(Y4) node[right]{$v^4_{n+4}$};
\draw (4,-2) circle(2pt) coordinate(Y3) node[above left]{$v^3_{n+4}$};

\qarrow{E3}{Y3}
\qarrow{Y4}{Y3}
\qarrow{B4}{D4}
\qarrow{D4}{Y4}
\qarrow{Y3}{D4}

{\color{blue}
\qarrow{A4}{C4}
\qarrow{C4}{E4}
}

\draw (0,-0.4) node{$\vdots$};
\draw (2,-0.4) node{$\vdots$};
\draw (4,-2.4) node{$\vdots$};
\draw (6,-2.4) node{$\vdots$};
\draw (7,-0.4) node{$\vdots$};

\end{scope}
\end{tikzpicture}
\caption{The quiver $Q(C_4)$. Arrows among the $v^3_n$ and $v^4_n$ are color-coded.}
\label{TQC4}
\end{figure}

\paragraph{\textbf{The case of $F_4$}}
We have $\mathbf{D}=(1,1,\frac{1}{2},\frac{1}{2})$, thus $d = \frac{1}{2}$. 
We define $Q(F_4)$ of a vertex set $I = \{v_n^i;~ i \in S, n \in \Z/2 \}$ in the following way: 
\begin{itemize}
\item 
prepare a copy of $Q(B_{3})$, and vertices $v_n^4$ for $n \in \Z/2$.  
\item
we have an arrow $v_n^4 \to v_{n+\frac{1}{2}}^{4}$, for $n \in \Z/2$, 
\item
we have an arrow $v_n^4 \to v_n^3$ and an arrow
$v_{n+\frac{1}{2}}^{3} \to v_n^{4}$, for $n \in \Z/2$. 
\end{itemize}
See Figure \ref{TQF4} for the quiver $Q(F_4)$.
\\

\begin{figure}[ht]
\begin{tikzpicture}
\begin{scope}[>=latex]
\draw (2,8.6) node{$\vdots$};
\draw (4,8.6) node{$\vdots$};
\draw (6,8.6) node{$\vdots$};

\draw (2,8) circle(2pt) coordinate(A2) node[above left]{$v^1_{n-1}$};
\draw (4,8) circle(2pt) coordinate(A3) node[above left]{$v^2_{n-1}$};
\draw (6,8) circle(2pt) coordinate(A4) node[left]{$v^3_{n-1}$};
\qarrow{A3}{A2}
\draw (2,6) circle(2pt) coordinate(B2) node[above left]{$v^1_n$};
\draw (4,6) circle(2pt) coordinate(B3) node[above left]{$v^2_n$};
\draw (6,6) circle(2pt) coordinate(B4) node[left]{$v^3_n$};
\qarrow{B3}{B2}
%
\qarrow{A2}{B2}
\qarrow{A3}{B3}
%
\qarrow{B2}{A3}
%
\draw (2,4) circle(2pt) coordinate(C2) node[above left]{$v^1_{n+1}$};
\draw (4,4) circle(2pt) coordinate(C3) node[above left]{$v^2_{n+1}$};
\draw (6,4) circle(2pt) coordinate(C4) node[left]{$v^3_{n+1}$};
\qarrow{C3}{C2}
%
\qarrow{B2}{C2}
\qarrow{B3}{C3}
%
\qarrow{C2}{B3}

\draw (2,3.6) node{$\vdots$};
\draw (4,3.6) node{$\vdots$};


\draw (8,7.6) node{$\vdots$};
\draw (10,7.6) node{$\vdots$};

\draw (6,7) circle(2pt) coordinate(A11) node[left]{$v^3_{n-\frac{1}{2}}$};
\draw (8,7) circle(2pt) coordinate(A21) node[above right]{$v^2_{n-\frac{1}{2}}$};
\draw (10,7) circle(2pt) coordinate(A31) node[above right]{$v^1_{n-\frac{1}{2}}$};
\qarrow{A21}{A31}
\draw (6,5) circle(2pt) coordinate(B11) node[left]{$v^3_{n+\frac{1}{2}}$};
\draw (8,5) circle(2pt) coordinate(B21) node[above right]{$v^2_{n+\frac{1}{2}}$};
\draw (10,5) circle(2pt) coordinate(B31) node[above right]{$v^1_{n+\frac{1}{2}}$};
\qarrow{B21}{B31}
%
\qarrow{A21}{B21}
\qarrow{A31}{B31}
%
\qarrow{A4}{A11}
\qarrow{A11}{B4}
\qarrow{B4}{B11}
\qarrow{B11}{C4}
%
\qarrow{B31}{A21}
%
\draw (6,3) circle(2pt) coordinate(C11) node[left]{$v^3_{n+\frac{3}{2}}$};
\draw (8,3) circle(2pt) coordinate(C21) node[above right]{$v^2_{n+\frac{3}{2}}$};
\draw (10,3) circle(2pt) coordinate(C31) node[above right]{$v^1_{n+\frac{3}{2}}$};
\qarrow{C21}{C31}
%
\qarrow{C4}{C11}

\qarrow{B21}{C21}
\qarrow{B31}{C31}
%
\qarrow{C31}{B21}

\qarrow{C11}{C3}
\qarrow{C3}{B11}
\qarrow{B11}{B3}
\qarrow{B3}{A11}
\qarrow{A11}{A3}

\qarrow{C21}{C4}
\qarrow{C4}{B21}
\qarrow{B21}{B4}
\qarrow{B4}{A21}
\qarrow{A21}{A4}

\draw (6,2.6) node{$\vdots$};
\draw (8,2.6) node{$\vdots$};
\draw (10,2.6) node{$\vdots$};
%
{\color{blue}
\draw (7.5,9.1) node{$\vdots$};

\draw (7.2,8.9) circle(2pt) coordinate(E1) node[right]{$v^4_{n-1}$};
\draw (7.2,7.9) circle(2pt) coordinate(E2) node[right]{$v^4_{n-\frac{1}{2}}$};
\draw (7.2,6.9) circle(2pt) coordinate(E3) node[right]{$v^4_{n}$};
\draw (7.2,5.9) circle(2pt) coordinate(E4) node[right]{$v^4_{n+\frac{1}{2}}$};
\draw (7.2,4.9) circle(2pt) coordinate(E5) node[right]{$v^4_{n+1}$};
\draw (7.2,3.9) circle(2pt) coordinate(E6) node[right]{$v^4_{n+\frac{3}{2}}$};

\draw (7.5,3) node{$\vdots$};

\qarrow{E1}{E2}
\qarrow{E2}{E3}
\qarrow{E3}{E4}
\qarrow{E4}{E5}
\qarrow{E5}{E6}

\qarrow{E1}{A4}
\qarrow{E2}{A11}
\qarrow{E3}{B4}
\qarrow{E4}{B11}
\qarrow{E5}{C4}
\qarrow{E6}{C11}

\qarrow{A11}{E1}
\qarrow{B4}{E2}
\qarrow{B11}{E3}
\qarrow{C4}{E4}
\qarrow{C11}{E5}

}
\end{scope}
\end{tikzpicture}
\caption{The quiver $Q(F_4)$.}
\label{TQF4}
\end{figure}

\paragraph{\textbf{The case of $G_2$}}
We have $\mathbf{D}=(1,3)$, thus $d = 1$. 
A quiver $Q(G_2)$ of a vertex set $I = \{v_n^i;~ i \in S, n \in \Z \}$ is defined in the following way: 
\begin{itemize}
\item
we have arrows $v_n^1 \to v_{n+1}^{1}$ and  $v_n^2 \to v_{n+3}^{2}$,
for $n \in \Z$,
\item
we have an arrow $v_n^2 \to v_n^1$ and an arrow
$v_{n+3}^{1} \to v_n^{2}$, for $n \in \Z$. 
\end{itemize}
See Figure \ref{TQG2} for the quiver $Q(G_2)$.

\begin{figure}[ht]
\begin{tikzpicture}
\begin{scope}[>=latex]
\draw (1,8.6) node{$\vdots$};
\draw (3,8.6) node{$\vdots$};
\draw (5,10.6) node{$\vdots$};
\draw (6,9.6) node{$\vdots$};

\draw (3,8) circle(2pt) coordinate(A1) node[above left]{$v^1_{n-1}$};
\draw (3,7) circle(2pt) coordinate(A2) node[above left]{$v^1_{n}$};
\draw (3,6) circle(2pt) coordinate(A3) node[above left]{$v^1_{n+1}$};
\draw (3,5) circle(2pt) coordinate(A4) node[above left]{$v^1_{n+2}$};
\draw (3,4) circle(2pt) coordinate(A5) node[above left]{$v^1_{n+3}$};
\draw (3,3) circle(2pt) coordinate(A6) node[above left]{$v^1_{n+4}$};
\draw (3,2) circle(2pt) coordinate(A7) node[above left]{$v^1_{n+5}$};
\draw (3,1) circle(2pt) coordinate(A8) node[above left]{$v^1_{n+6}$};
\draw (3,0) circle(2pt) coordinate(A9) node[above left]{$v^1_{n+7}$};

\qarrow{A1}{A2}
\qarrow{A2}{A3}
\qarrow{A3}{A4}
\qarrow{A4}{A5}
\qarrow{A5}{A6}
\qarrow{A6}{A7}
\qarrow{A7}{A8}
\qarrow{A8}{A9}

\draw (5,10) circle(2pt) coordinate(B1) node[right]{$v^2_{n-3}$};
\draw (5,7) circle(2pt) coordinate(B2) node[right]{$v^2_{n}$};
\draw (5,4) circle(2pt) coordinate(B3) node[right]{$v^2_{n+3}$};
\draw (5,1) circle(2pt) coordinate(B4) node[right]{$v^2_{n+6}$};

\qarrow{B1}{B2}
\qarrow{B2}{B3}
\qarrow{B3}{B4}

\qarrow{B2}{A2}
\qarrow{B3}{A5}
\qarrow{B4}{A8}

\qarrow{A2}{B1}
\qarrow{A5}{B2}
\qarrow{A8}{B3}

{\color{blue}
\draw (6,9) circle(2pt) coordinate(C1) node[right]{$v^2_{n-2}$};
\draw (6,6) circle(2pt) coordinate(C2) node[right]{$v^2_{n+1}$};
\draw (6,3) circle(2pt) coordinate(C3) node[right]{$v^2_{n+4}$};
\draw (6,0) circle(2pt) coordinate(C4) node[right]{$v^2_{n+7}$};

\qarrow{C1}{C2}
\qarrow{C2}{C3}
\qarrow{C3}{C4}

\qarrow{C2}{A3}
\qarrow{C3}{A6}
\qarrow{C4}{A9}

\qarrow{A3}{C1}
\qarrow{A6}{C2}
\qarrow{A9}{C3}

}

\draw (1,8) circle(2pt) coordinate(D1) node[left]{$v^2_{n-1}$};
\draw (1,5) circle(2pt) coordinate(D2) node[left]{$v^2_{n+2}$};
\draw (1,2) circle(2pt) coordinate(D3) node[left]{$v^2_{n+5}$};

\qarrow{D1}{D2}
\qarrow{D2}{D3}

\qarrow{D1}{A1}
\qarrow{D2}{A4}
\qarrow{D3}{A7}

\qarrow{A4}{D1}
\qarrow{A7}{D2}

\draw (1,1.6) node{$\vdots$};
\draw (3,-0.4) node{$\vdots$};
\draw (5,0.6) node{$\vdots$};
\draw (6,-0.4) node{$\vdots$};

\end{scope}
\end{tikzpicture}
\caption{The quiver $Q(G_2)$}
\label{TQG2}
\end{figure}

\begin{remark}\label{rem:HL}
The infinite quivers $Q(\mg)$ was introduced in \cite{HL16},
where they used its semi-infinite subquiver to study the cluster structure of the $q$-character of Kirillov-Reshetikhin modules. They also used the full quiver to study the category of representations of a Borel subalgebra in \cite{HL16b}.  
For $\mg = A_\ell$ and other simply-laced-cases, periodic quivers $Q_m(\mg)$ appeared in \cite{ILP16} and \cite{IIO19}.  
\end{remark}

\subsection{The periodic quiver $Q_m(\mg)$ and Weyl group action}

Let $m$ be an integer bigger than one.
We consider the case that the vertex set $I$ of the quiver $Q(\mg)$ is `$m$-periodic'; $I = I_m = \{v_n^i;~ i \in S, n \in d \Z/m d'\Z \}$.
We write $Q_m(\mg)$ for this quiver.

By construction, in $Q_m(\mg)$ there are oriented circles $P_{i,\gamma_i}; ~i \in S, \gamma_i = 1,2,\ldots,d_i/d$.
When $d_i/d = 1$, we write $P_i$ for $P_{i,1}$ for simplicity. 
We present the circles case by case in the following.
\\

\paragraph{\textbf{The case of $A_\ell, D_\ell$ and $E_{6,7,8}$}}
In the cases of $\mg$ of simply-laced Dynkin diagram, we have $\ell$ circles as
\begin{align}
\label{PathA}
P_i : v^i_1 \to v^i_2 \to \cdots \to v^i_m \to v^i_1
\end{align}
for $i \in S$.
\\

\paragraph{\textbf{The case of $B_\ell$}}
The vertex set of $Q_m(B_\ell)$ is $I = \{v^i_n;~ i \in S, n \in \frac{1}{2} \Z / m \Z \}$ and we have $2\ell-1$ circles in total:
\begin{align}
\label{PathB1}
\begin{split}
&P_{i,1} : v^i_1 \to v^i_2 \to \cdots \to v^i_m \to v^i_1,
\\
&P_{i,2} : v^i_{\frac{1}{2}} \to v^i_{\frac{3}{2}} \to \cdots \to v^i_{m-\frac{1}{2}} \to v^i_{\frac{1}{2}}
\end{split}
\end{align}
for $i=1,\ldots,\ell-1$, and
\begin{align}
\label{PathB2}
P_\ell : v^\ell_{\frac{1}{2}} \to v^\ell_1 \to v^\ell_{\frac{3}{2}} \to v^\ell_2 \to \cdots \to v^\ell_{m-\frac{1}{2}} \to v^\ell_m \to v^\ell_{\frac{1}{2}}.
\end{align} 

\paragraph{\textbf{The case of $C_\ell$}}
The vertex set of $Q_m(C_\ell)$ is $I = \{v^i_n;~ i \in S, n \in \Z / 2m \Z \}$, and we have $\ell+1$ circles:
\begin{align}
\label{PathC1}
P_i : v^i_1 \to v^i_2 \to \cdots \to v^i_{2m} \to v^i_1
\end{align}
for $i=1,\ldots,\ell-1$, and 
\begin{align}
\label{PathC2}
&P_{\ell,k} : v^\ell_k \to v^\ell_{k+2} \to \cdots \to v^\ell_{k+2m-2} \to v^\ell_k; ~k=1,2.
\end{align}

\paragraph{\textbf{The case of $F_4$}}
The vertex set of $Q_m(F_4)$ is $I = \{v^i_n;~ i \in S, n \in \frac{1}{2} \Z / m \Z \}$, and we have $6$ circles:
$P_{i,1}$ and $P_{i,2}$ as \eqref{PathB1} for $i=1,2$, and $P_i$ as \eqref{PathB2} for $i=3,4$.
\\

\paragraph{\textbf{The case of $G_2$}}
The vertex set of $Q_m(G_2)$ is $I = \{v^i_n;~ i \in S, n \in \Z / 3m \Z \}$, and we have $4$ circles:
\begin{align*}
&P_1 : v^1_1 \to v^1_2 \to \cdots \to v^1_{3m} \to v^1_1
\\
&P_{2,k} : v^2_k \to v^2_{k+3} \to v^2_{k+6} \to \cdots \to v^2_{k+3m-3} \to v^2_k; ~k=1,2,3.
\\ \nonumber
\end{align*}

For an oriented circle $P$ of length $p$ as $v_1 \to v_2 \to \cdots \to v_p \to v_1$, define a sequence of mutation $M(P) := \mu_{p-2} \cdots \mu_2 \mu_1$ and 
\begin{align}\label{eq:R-op}
  R(P) := M(P)^{-1} \circ (v_{p-1}, v_p) \circ \mu_p \mu_{p-1} \circ M(P).
\end{align}
Here $\mu_k$ is the mutation at vertex $v_k$, and $(v_{p-1}, v_p) \in \mathfrak{S}_p$ interchanges vertices 
$v_{p-1}$ and $v_p$.
Also define a polynomial $f_X(P,v_n)$ in the $X$-variables on $P$:
$$
  f_X(P,v_n) := 1 + \sum_{k=0}^{p-2} X_n X_{n-1} \cdots X_{n-k}; ~ v_n \in P.
$$

\begin{prop}
The mutation sequence $R(P_{i,\gamma_i})$ preserves the quiver $Q_m(\mg)$.
\end{prop}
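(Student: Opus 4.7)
The plan is to reduce the proof to a local verification around the cycle $P = P_{i,\gamma_i}$ of length $p$ with vertices labeled $v_1, \ldots, v_p$ in cyclic order. All mutations in $R(P)$ act only at vertices of $P$, so every vertex of $Q_m(\mg)$ outside the one-step neighborhood $N(P)$ of $P$ is left untouched, as are all arrows that lie entirely outside $P \cup N(P)$. It thus suffices to verify that $R(P)$ preserves the sub-quiver $Q_P$ supported on $P \cup N(P)$.

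First I would catalog the local structure of $Q_P$ from the case-by-case construction in Section 3.1. Each external vertex $w \in N(P)$ lies on a cycle $P_{j,\gamma_j'}$ with $j$ Dynkin-adjacent to $i$, and carries exactly one in-arrow from some $v_a \in P$ together with one out-arrow to some $v_b \in P$. A direct reading of the figures shows that in most cases $(v_a,v_b)$ is a pair of cyclically consecutive vertices of $P$; however, when $P$ is a ``long-side'' cycle adjacent to a shorter-root cycle, $v_a$ and $v_b$ can instead be spaced by $d'/d$ positions on $P$ (two positions in $B_\ell$, $C_\ell$ and $F_4$, and three in $G_2$). Then I would track the action of $M(P) = \mu_{p-2}\cdots\mu_1$ by induction, the invariant being that after $\mu_k\cdots\mu_1$ the vertices $v_{k+1},\ldots,v_p$ still form an oriented sub-cycle, the vertices $v_1,\ldots,v_k$ appear as ``fan'' attachments at $v_{k+1}$, and each external vertex retains an in-out pair to suitably updated positions on the remaining cycle. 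The inductive step is a direct triangle-flip computation from \eqref{eq:e-mutation} applied at $v_{k+1}$, a vertex that at that moment sits on a three-cycle with $v_k$ and $v_{k+2}$.

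After $M(P)$ only a three-cycle on $v_{p-2},v_{p-1},v_p$ (with the aforementioned fan at $v_{p-1}$) survives. A finite local computation then shows that $\mu_p\mu_{p-1}$ followed by the transposition $(v_{p-1},v_p)$ leaves this configuration invariant, so applying $M(P)^{-1}$ returns $Q_P$ to its original form by the involutivity of mutations, establishing $R(P)(Q_m(\mg)) = Q_m(\mg)$. The main obstacle is verifying the inductive invariant when the external attachments are non-consecutive, i.e.\ for the long-side cycles in $B_\ell$, $C_\ell$, $F_4$ and especially $G_2$; here the bookkeeping of how the position of each in-out pair shifts as the cycle shrinks is considerably more intricate, and one must carefully check, separately for each stride, that the triangle-flip identity continues to produce the predicted fan configuration at every step of the induction.
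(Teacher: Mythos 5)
The paper does not actually prove this proposition by hand: it is disposed of in one line as a corollary of Theorem~7.7 of Goncharov--Shen \cite{GS16} (and the quiver-preservation is re-derived, together with the action on variables, in the Lemma that follows Theorem~\ref{R-action}, by the factorization $R(P)=\mu_1\circ R(P')\circ\mu_1$ with $P'$ the shortened cycle $v_2\to\cdots\to v_p\to v_2$ inside $\mu_1(Q)$). So your direct combinatorial verification is necessarily a different route. Your reduction to the subquiver on $P\cup N(P)$ is valid, and your catalogue of the local structure --- every external neighbour of $P_{i,\gamma_i}$ carries exactly one arrow into $P$ and one arrow out of $P$, at positions that are consecutive except on the long cycles of $B_\ell$, $C_\ell$, $F_4$, $G_2$, where they are separated by the appropriate stride --- is exactly the hypothesis one needs and matches what the paper records later (in the proof that $R_i$ is peripheral).

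Two points in your plan are wrong as stated and would derail the final computation. First, $M(P)=\mu_{p-2}\cdots\mu_1$ consists of $p-2$ mutations, so after $M(P)$ no three-cycle on $v_{p-2},v_{p-1},v_p$ survives: the last mutation $\mu_{p-2}$ destroys that triangle, leaving no arrow between $v_{p-1}$ and $v_p$ and instead the configuration $v_{p-1}\to v_{p-2}\to v_p$ with the path $v_1\to v_2\to\cdots\to v_{p-2}$ attached (check $p=3$: $\mu_1$ turns the triangle into $v_2\to v_1\to v_3$). Your concluding finite check must be run on this quiver, not on the one you describe; it does go through, but only after the correction. Second, your inductive invariant records only the circle, the fan, and each external vertex's in--out pair to the circle, but mutation at $v_{k+1}$ also creates arrows \emph{between} external vertices (and between fan vertices and external vertices) whenever an in-neighbour and an out-neighbour of $v_{k+1}$ are both outside the shrinking cycle; these arrows lie inside $Q_P$ and must be tracked and shown to be restored, which your invariant cannot certify. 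Both defects are repaired at once by replacing the step-by-step tracking of $M(P)$ with the recursion $R(P)=\mu_1\circ R(P')\circ\mu_1$: one only has to check that $\mu_1(Q)$ again satisfies the one-in--one-out hypothesis with respect to $P'$ (with $v_1$ becoming a new external vertex of the correct type), after which the inner $R(P')$ preserves \emph{everything} by induction --- including whatever arrows $\mu_1$ created among external vertices --- and the outer $\mu_1$ undoes the first. I would recommend restructuring your argument along those lines rather than trying to carry the fan bookkeeping through all $p-2$ steps, especially for the strided attachments in $G_2$.
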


This is obtained as a corollary of \cite[Theorem 7.7]{GS16}.

\begin{thm}\label{R-action}
The action of $R(P_{i,\gamma_i})$ on the seed $(Q_m(\mg),\mathbf{X},\mathbf{A})$ induces the following action $R(P_{i,\gamma_i})^\ast$ on $\C(\mathbf{X})$ and $\C(\mathbf{A})$.
\begin{enumerate}
\item The action on $\C(\mathbf{X})$ when $d_i \geq d_j$ is   
\begin{align}\label{eq:RonX}
R(P_{i,\gamma_i})^\ast (X^j_n) =  
  \begin{cases}
    \displaystyle{\frac{f_X(i,n)}{X^j_{n-d_i} \,f_X(i,{n-2d_i})}}  
    & i=j, ~v^i_n \in P_{i,\gamma_i},
    \\[2mm]
    \displaystyle{X^{j}_n \frac{X^i_{n-d_i} f_X(i,{n-2 d_i})}{f_X(i,{n-d_i})}} 
    & v^j_n \leftarrow v^i_n, ~ v^i_n \in P_{i,\gamma_i},
    \\[2mm]    
    \displaystyle{X^{j}_n \frac{X^i_{n} f_X(i,{n-d_i})}{f_X(i,{n})}}
    & v^j_n \to v^i_n, ~ v^i_n \in P_{i,\gamma_i},
    \\[2mm]    
    \displaystyle{X^{j}_n \frac{X^i_{n-\frac{1}{2}} f_X(i,{n-d_i-\frac{1}{2}})}{f_X(i,{n-\frac{1}{2}})}}
    & v^j_n \to v^i_{n-\frac{1}{2}}, ~ v^i_{n-\frac{1}{2}} \in P_{i,\gamma_i},
    \\[1mm]
    X^j_n & \text{otherwise},
  \end{cases}
\end{align}  
where we write $f_X(i,n)$ for $f_X(P_{i,\gamma_i};v^i_n)$ for $v^i_n \in P_{i,\gamma_i}$ without confusion.
When $d_i < d_j$, non-trivial actions are
\begin{align}\label{eq:RonX-exc}
R(P_i)^\ast (X^j_n) 
= 
\begin{cases}
\displaystyle{X^j_n \frac{X^i_{n-\frac{1}{2}} X^i_{n} f_X(i,n-1)}{f_X(i,n)}}
 \quad (i,j)= \begin{cases} (\ell,\ell-1); B_\ell, \\ (3,2); F_4, \end{cases}
\\
\displaystyle{X^\ell_n \frac{X^{\ell-1}_{n} X^{\ell-1}_{n+1} f_X(\ell-1,n-1)}{f_X(\ell-1,n+1)}} \quad (i,j) = (\ell-1,\ell); C_\ell,
\\[2mm]
\displaystyle{X^2_n \frac{X^{1}_{n} X^{1}_{n+1} X^{1}_{n+2} f_X(1,n-1)}{f_X(1,n+2)}} 
  \quad (i,j) = (1,2); G_2.
\end{cases}
\end{align}
\item The action on $\C(\mathbf{A})$ is  
\begin{align}\label{A-transf}
  R(P_{i,\gamma_i})^\ast (A^j_n) = 
  \begin{cases}
  f_A(i,\gamma_i) A^j_n & i=j, ~v^i_n \in P_{i,\gamma_i},
  \\ 
  A^j_n & \text{otherwise},
  \end{cases}
\end{align}  
where 
$$
  f_A(i,\gamma_i)  
  = 
  \sum_{n: v_n^i \in P_{i,\gamma_i}} \frac{1}{A^i_n A^i_{n+d_i}}
  \prod_{j:i \lhd j} A^j_n \cdot \prod_{j:i \rhd j} A^j_{n+d_i},
$$
for all cases,
except for $i=\ell-1,\ell$ for $B_\ell$, $i=\ell-1$ for $C_\ell$, $i=2,3$ for $F_4$, and $i=2$ for $G_2$. 
Here $j \lhd i$ means that $j < i$ and $C_{ij} \neq 0$.
For the exceptional cases, $f_A(i,\gamma_i)$ is set to be 
$$ 
  f_A(i,\gamma_i) = \displaystyle{\sum_{n:v^i_n \in P_{i,\gamma_i}} \frac{A^{i+1}_{n+\frac{1}{2}} A^{i-1}_{n+1}}{A^i_n A^i_{n+1}}} \quad i=\ell-1; B_\ell,~ i=2; F_4
$$
$$ 
  f_A(i,1) 
  =
  \begin{cases}
  \displaystyle{\sum_{n: v_n^\ell \in P_\ell} \frac{A^{\ell-1}_{n+\frac{1}{2}} A^{\ell-1}_{n}}{A^\ell_n A^\ell_{n+\frac{1}{2}}}} & i=\ell; B_\ell,
  \\
  \displaystyle{\sum_{n: v_n^{\ell-1} \in P_{\ell-1}} 
\frac{A^{\ell-2}_{n+1} A^\ell_n A^\ell_{n-1}}{A^{\ell-1}_n A^{\ell-1}_{n+1}}} & i=\ell-1; C_\ell,  
  \\
  \displaystyle{\sum_{n: v_n^{3} \in P_3} \frac{A^{2}_{n+\frac{1}{2}} A^{2}_{n}A^4_n}{A^3_n A^3_{n+\frac{1}{2}}}} & i=3; F_4,
  \\
  \displaystyle{\sum_{n: v_n^1 \in P_1} 
\frac{A^2_n A^2_{n-1} A^2_{n-2}}{A^1_n A^1_{n+1}}} & i=1; G_2. 
  \end{cases}
$$
\end{enumerate}
\end{thm}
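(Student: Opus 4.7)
The plan is to verify both parts of the theorem by direct computation of the mutation sequence
$$R(P_{i,\gamma_i}) = M(P)^{-1} \circ (v_{p-1},v_p) \circ \mu_p \mu_{p-1} \circ M(P),$$
treating each Dynkin type of $\mg$ separately but following a uniform strategy. The simply-laced cases $A_\ell, D_\ell, E_{6,7,8}$ can be handled exactly as in \cite{IIO19}, so the new content lies in the non-simply-laced cases, where one must carefully account for differing values of $d_i$ and for the exceptional arrows between cycles.

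For a general oriented cycle $P: v_1 \to v_2 \to \cdots \to v_p \to v_1$, the first step is to establish by induction on $k$ that after the partial sequence $\mu_k \cdots \mu_1$ the $X$-variable at $v_{k+1}$ acquires a factor $1+X_k+X_kX_{k-1}+\cdots+X_k\cdots X_1$, and the $X$-variables at neighbors of $v_1,\ldots,v_k$ outside $P$ acquire compatible monomial factors. After the full $M(P) = \mu_{p-2} \cdots \mu_1$, these partial sums assemble into the polynomials $f_X(P,v_n)$, and the sub-quiver on $\{v_1,\ldots,v_{p-1}\}$ is unwound into a linear chain. Applying $\mu_p \mu_{p-1}$ then closes the wrap-around, and the transposition $(v_{p-1},v_p)$ together with $M(P)^{-1}$ restores $Q_m(\mg)$ while yielding the stated $X$-transformation at vertices on $P$. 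A parallel induction on the $A$-variables identifies the sum $f_A(i,\gamma_i)$; only $A$-variables at vertices on $P$ can change, since off-cycle vertices are never mutated.

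The effect on an off-cycle variable $X^j_n$ in \eqref{eq:RonX} is determined by how $v^j_n$ is attached to $P_{i,\gamma_i}$: the factor accumulated through $M(P)$ and $M(P)^{-1}$ depends on whether arrows go in or out of $P_{i,\gamma_i}$ from $v^j_n$ and on the index parity of the connecting vertex (the half-integer indices in $B_\ell$ and $F_4$ produce the fourth case of \eqref{eq:RonX}). The exceptional formulas \eqref{eq:RonX-exc} capture situations where a single off-cycle vertex connects to $P_{i,\gamma_i}$ through multiple cycle vertices: two consecutive vertices for $C_\ell$ at $(i,j)=(\ell-1,\ell)$, for $B_\ell$ at $(\ell,\ell-1)$, and for $F_4$ at $(3,2)$; three consecutive vertices for $G_2$ at $(1,2)$ because $|C_{12}|=3$. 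In each such case the individual pull-through factors multiply together to give the stated product, and the claim is checked against the relevant figure.

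The main obstacle will be the bookkeeping in the non-simply-laced cases, especially $G_2$, where the cycle $P_{2,k}$ interacts with $P_1$ through three pairs of arrows and one must track how three partial contributions assemble into a single rational expression for $R(P_1)^\ast(X^2_n)$. A secondary technical point is verifying that the partial $A$-variable sums accumulated during $M(P)$ telescope, after $\mu_p \mu_{p-1}$, into the full cyclic sum $f_A(i,\gamma_i)$; this requires consistent indexing at the \emph{seam} where the cycle closes, and in the exceptional cases $i=\ell-1,\ell$ for $B_\ell$, $i=\ell-1$ for $C_\ell$, $i=2,3$ for $F_4$, and $i=2$ for $G_2$, the terms contributed by neighbors of different $d$-values must be combined correctly to match the displayed formula. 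Both points reduce to elementary but intricate algebra that is carried out case by case.
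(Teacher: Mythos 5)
Your overall plan---compute $R(P_{i,\gamma_i})$ directly by tracking how factors accumulate along $M(P)$, then close up with $\mu_p\mu_{p-1}$ and undo with $M(P)^{-1}$---is sound and would yield the theorem, but the paper organizes the computation quite differently. Instead of doing the bookkeeping per Dynkin type, the paper reduces everything to a single lemma about an abstract circle $P\colon v_1 \to \cdots \to v_p \to v_1$ with one attached vertex $v_a$ carrying exactly two arrows $v_a \to v_1$ and $v_k \to v_a$; since every off-circle vertex of $Q_m(\mg)$ adjacent to a circle has exactly this arrow pattern, all cases of \eqref{eq:RonX} and all exceptional cases \eqref{eq:RonX-exc} are instances of the one formula $R(P)^\ast(X_a) = X_a X_1\cdots X_{k-1}\, f_X(P,v_p)/f_X(P,v_{k-1})$ for the appropriate $k$ ($k=2$ in the standard cases, $k=3$ for the $B_\ell$, $F_4$, $C_\ell$ exceptions, $k=4$ for $G_2$)---exactly the ``multiple cycle vertices'' phenomenon you describe, but handled uniformly rather than figure by figure. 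Moreover, for the attached-vertex formula the paper inducts on the \emph{length of the circle} via the factorization $R(P) = \mu_1 \circ R(P') \circ \mu_1$, where $P'$ is the shorter circle $v_2 \to \cdots \to v_p \to v_2$ inside $\mu_1(Q)$; this avoids tracking intermediate seeds altogether. Your forward induction along $M(P)$ is essentially what the paper does for the $A$-variables, so that part matches. One detail in your sketch is off, though: during $M(P)$ the off-circle neighbors do \emph{not} acquire monomial factors---already after $\mu_1$ the neighbor $v_a$ picks up $(1+X_1^{-1})^{-1}$---so the intermediate bookkeeping is genuinely rational, and only the net effect of the full conjugated sequence collapses to the clean form in the theorem. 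This does not break your argument, but it is precisely where the ``elementary but intricate algebra'' you defer would live, and it is the part the paper's circle-length induction sidesteps.
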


This theorem follows from the next lemma.

\begin{lem}
Let $Q$ be a quiver including an oriented circle $P$ as $v_1 \to v_2 \to \ldots \to v_p \to v_1$ and a vertex $v_a$ with two arrows $v_a \to v_1$ and $v_k \to v_a$ for some $1 \leq k \leq p$. Then a mutation sequence $R(P)$ preserves $Q$ and 
induces the action on $\C(\mathbf{X})$ and $\C(\mathbf{A})$ as follows:
\begin{align*}
  &R(P)^\ast(X_n) = 
  \begin{cases}
    \displaystyle{\frac{f_X(P,v_n)}{X_{n-1} \, f_X(P,v_{n-2})}} & v_n \in P,
    \\[2mm]
    \displaystyle{X_a X_1 X_2 \ldots X_{k-1} \frac{f_X(P,v_p)}{f_X(P,v_{k-1})}} & n = a, 
  \end{cases}
  \\
  &R(P)^\ast(A_n) =
  \begin{cases}
    A_n f_A(P) & v_n \in P,
    \\
    A_a  & n = a. 
  \end{cases}  
\end{align*}
Here we define 
\begin{align}
  &f_A(P) := \sum_{j=1}^{k-1} \frac{A_a}{A_j A_{j+1}} + \sum_{j=k}^{p} \frac{1}{A_j A_{j+1}}. 
\end{align}
\end{lem}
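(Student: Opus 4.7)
The plan is a direct, structured computation tracking the quiver and the $A$/$X$ variables through the four blocks of $R(P) = M(P)^{-1} \circ (v_{p-1},v_p) \circ \mu_p \mu_{p-1} \circ M(P)$.

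The first stage is an induction on $j$ analysing the initial segment $\mu_j \cdots \mu_1$ of $M(P)$. I claim that after $\mu_j \cdots \mu_1$, the subquiver on $P$ takes the explicit shape: a forward path $v_1 \to v_2 \to \cdots \to v_j$, together with arrows $v_{j+1} \to v_j$, $v_j \to v_p$, $v_p \to v_{j+1}$, and a tail $v_{j+1} \to v_{j+2} \to \cdots \to v_p$. The inductive step applies \eqref{eq:e-mutation} at $v_{j+1}$: the 2-path $v_p \to v_{j+1} \to v_j$ creates an arrow $v_p \to v_j$ which cancels the existing $v_j \to v_p$, while the 2-path $v_p \to v_{j+1} \to v_{j+2}$ creates a new arrow $v_p \to v_{j+2}$; together with the reversals at $v_{j+1}$ this reproduces the same pattern with $j+1$ in place of $j$. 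At the final step $j=p-2$ the would-be arrows $v_{p-1} \to v_p$ and $v_p \to v_{p-1}$ cancel, so after $M(P)$ the subquiver on $P$ reduces to $v_1 \to \cdots \to v_{p-2}$ together with $v_{p-1} \to v_{p-2} \to v_p$. In parallel with the quiver induction I would propagate the rules \eqref{eq:X-mutation} and \eqref{eq:A-mutation}, so that the partial sums $1 + X_n + X_n X_{n-1} + \cdots$ appearing in $f_X(P,v_n)$ and the individual summands of $f_A(P)$ emerge as a telescoping accumulation of the successive exchange relations.

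After $M(P)$ the local configuration on $\{v_{p-2},v_{p-1},v_p\}$ is the path $v_{p-1} \to v_{p-2} \to v_p$, so $\mu_p \mu_{p-1}$ acts on an almost trivial local quiver, and \eqref{eq:e-mutation}--\eqref{eq:A-mutation} yield the local variable changes directly. The transposition $(v_{p-1},v_p)$ then swaps the two labels, and the last block $M(P)^{-1} = \mu_1 \mu_2 \cdots \mu_{p-2}$ is analysed by running the same induction in reverse: each step restores one arrow of $P$ while compounding further substitutions of the cluster variables. The quiver is invariant under $R(P)$ because the combinatorial effects of the two halves of $R(P)$ are inverse to each other and the middle block restores the cyclic orientation of $P$.

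The main technical obstacle is the bookkeeping at the external vertex $v_a$. Since no mutation of $R(P)$ takes place at $v_a$, the $A$-variable $A_a$ is manifestly invariant; but the arrows incident to $v_a$ change every time a neighbouring vertex is mutated, and one has to identify which of $v_a \to v_1$ and $v_k \to v_a$ are present, reversed, or cancelled at each stage. The cases $k=1$, $k=p$ and intermediate $k$ differ in their cancellation pattern and must be treated uniformly. Correspondingly, $X_a$ picks up a factor $(1+X_j^{\pm 1})^{\pm \ve_{aj}}$ at each mutation of a neighbouring $v_j$, and the main content of the lemma is that these factors telescope into the monomial $X_1 X_2 \cdots X_{k-1}$ multiplied by the polynomial ratio $f_X(P,v_p)/f_X(P,v_{k-1})$; an analogous telescoping of the $A$-exchange relations produces the factor $f_A(P)$ (whose summands with $A_a$ in the numerator come precisely from the mutations that ``see'' the arrows out of $v_a$, while the summands without $A_a$ arise from mutations that do not). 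Verifying these two telescoping identities is the one nontrivial step; once they are established, the formulas in the lemma follow by collecting the accumulated substitutions.
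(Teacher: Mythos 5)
Your outline of the quiver combinatorics is correct: the inductive description of the subquiver on $P$ after $\mu_j\cdots\mu_1$ (forward path $v_1\to\cdots\to v_j$, the triangle $v_{j+1}\to v_j$, $v_j\to v_p$, $v_p\to v_{j+1}$, and the remaining tail) is exactly right, and it does collapse to $v_{p-1}\to v_{p-2}\to v_p$ after $M(P)$. But the proof stops precisely where the lemma begins. The entire content of the statement is the two closed-form expressions $R(P)^\ast(X_a)=X_aX_1\cdots X_{k-1}\,f_X(P,v_p)/f_X(P,v_{k-1})$ and $R(P)^\ast(A_n)=A_nf_A(P)$, and you defer both to ``telescoping identities'' that are named but never established. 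Saying that the factors $(1+X_j^{\pm1})^{\pm\ve_{aj}}$ ``telescope into'' the stated monomial times the stated ratio is an assertion of the conclusion, not a derivation of it; in particular you never produce the intermediate formulas (e.g.\ what $X_a$ looks like after $\mu_j\cdots\mu_1$, or what $A_n$ looks like after $M(P)$) from which the telescoping could actually be checked, and you correctly flag but do not resolve the case analysis in $k$ for the arrows at $v_a$. As written, the lemma is unproven.

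For comparison, the paper avoids the heavy direct bookkeeping for $X_a$ by inducting on the length $p$ of the circle: it factorizes $R(P)=\mu_1\circ R(P')\circ\mu_1$, where $P'$ is the shorter circle $v_2\to\cdots\to v_p\to v_2$ appearing in $\mu_1(Q)$, applies the inductive hypothesis to $R(P')$, and then only has to compute the effect of the two outer mutations $\mu_1$ together with the identities $f_{X'}(P',v_p)=f_X(P,v_1)/(1+X_1)$ and $f_{X'}(P',v_{k-1})=f_X(P,v_{k-1})$. This reduces the ``telescoping'' to a single explicit three-line computation. For the $A$-variables the paper does track the sequence $M(P)$ directly, but it writes down the explicit intermediate value $A_n[p-2]$ as a sum of monomials and then invokes the translation invariance of $R(P)$ along the circle to recover all $R(P)^\ast(A_n)$ from $A_{p-1}[p]$ and $A_p[p]$. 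If you want to salvage your direct approach, you must supply the analogues of these intermediate closed forms; otherwise the shorter route is to adopt the length induction.
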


\begin{proof}
The claim for $X_n$ of $v_n \in P$ is well known (see for example \cite{Bu14,ILP16,GS16}). The claims for $X_a$ and $A_n$ are essentially shown in \cite[Theorem 7.7]{GS16}, for which we present the proof for self-consistency.

The case of $X_n = X_a$ is shown 
by induction in the same manner as \cite{MOT19}. 
When $p = 2$, it is easy to check the claim.
Assume the claim for $p \geq L-1$.
When $p = L$, note that a quiver $Q' := \mu_1(Q)$ contains a smaller oriented circle $P': v_2 \to v_3 \to \cdots \to v_L \to v_2$ and $v_a$ is connetced with $P'$ by two arrows $v_a \to v_2$ and $v_k \to v_a$.  
We factorize the sequense of mutation $R(P)$ as $R(P) = \mu_1 \circ R(P') \circ \mu_1$, and consider the sequence of $X$-seeds: 
$$
  (Q, \mathbf{X}) \stackrel{\mu_1}{\mapsto} (Q', \mathbf{X}') \stackrel{R(P')}{\mapsto} (Q', \mathbf{X}'') 
\stackrel{\mu_1}{\mapsto} (Q, \mathbf{X}''').
$$
By the assumption, we have
$$
  X_a'' = R(P')^\ast(X_a') = X_a' X_2' X_3' \cdots X_{k-1}' \frac{f_{X'}(P',v_L)}{f_{X'}(P',v_{k-1})}.
$$
On the other hand, from the above sequence of seeds we obtain
$$
  X_i' = 
  \begin{cases}
    X_1^{-1} & i=1,
    \\
    X_2(1+X_1) & i=2,
    \\
    X_i(1+X_1^{-1})^{-1} & i=L,a,
    \\
    X_i & \text{otherwise},
  \end{cases}
  \qquad 
  X_1'' = (X_1''')^{-1} = \frac{X_L f_X(P,v_{L-1})}{f_X(P,v_1)}.
$$  
Thus by direct calculations, we obtain $f_{X'}(P',v_L)=f_X(P,v_1)/(1+X_1)$, $f_{X'}(P',v_{k-1}) = f_X(P,v_{k-1})$, and   
\begin{align*}
  X_a''' &= X_a''(1+X_1'') 
  \\
  &= X_a (1+X_1^{-1})^{-1} X_2(1+X_1) X_3 \cdots X_{k-1} \frac{f_X(P,v_1)}{(1+X_1)f_X(P,v_{k-1})} \left(1+\frac{X_L f_X(P,v_{L-1})}{f_X(P,v_1)}\right)
  \\
  &= X_a X_1 X_2 \cdots X_{k-1} \frac{f_X(P,v_L)}{f_X(P,v_{k-1})},
\end{align*}
which completes the induction.

The claim for $A_n$ of $v_n \in P$ is shown by slightly modifying the way in \cite{ILP16}. Consider a sequence of $A$-seeds:
$$
  (Q, \mathbf{A}) \stackrel{M(P)}{\mapsto} (Q[p-2], \mathbf{A}[p-2])
  \stackrel{\mu_{p} \mu_{p-1}}{\mapsto}(Q[p], \mathbf{A}[p]).
$$  
We inductively obtain that 
$$
  A_n[p-2] = \frac{A_{n+1}}{A_1} + A_a \sum_{j=1}^{k-1} \frac{A_{n+1} A_p}{A_j A_{j+1}} + \sum_{j=k}^{p-2} \frac{A_{n+1} A_p}{A_j A_{j+1}}; ~ n=1,2,\ldots,p-2,
$$
thus we have $A_{p-1}[p] = \frac{A_{p-2}[p-2]+1}{A_{p-1}} = A_p f_A(P) = R(P)^\ast (A_p)$, and also $A_{p}[p] = A_{p-1} f_A(P) = R(P)^\ast (A_{p-1})$. 
Due to the translation invariance of the action of $R(P)$, the claim follows. 
\end{proof}

Remark that when $d_i/d > 1$, $R(P_{i,\gamma_i})$ for $\gamma_i = 1,\cdots,d_i/d$ are mutually commutative.
We define $R_i := R(P_{i,1}) \circ R(P_{i,2}) \circ \cdots \circ R(P_{i,d_i/d})$ for $i \in S$.

\begin{prop}
For a seed $(Q(\mg),\mathbf{X},\mathbf{A})$, let $p$ be the positive map from $\C(\mathbf{A})$ to $\C(\mathbf{X})$ \cite{FZ-CA4} given by 
\begin{align}\label{eq:positivemap}
  p^\ast (X^i_n) = \prod_{v^j_m \in Q_m(\mg)} (A^j_m)^{\ve_{v^i_n v^j_m}}.
\end{align}
Then $R_i$ belongs to the peripheral subgroup of $\Gamma_{Q_m(\mg)}$, i.e.
$(R_i)^\ast p^\ast (X^i_n) = p^\ast (X^i_n)$ for any $X^i_n$. 
\end{prop}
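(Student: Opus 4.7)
The plan is to reduce the claim to a direct computation on $A$-variables, using the formulas in Theorem~\ref{R-action}(2). First I observe that the sub-sequences $R(P_{i,\gamma})$, $\gamma = 1, \ldots, d_i/d$, commute and each leaves fixed all the $A$-variables appearing in the factors $f_A(i,\gamma')$ for $\gamma' \neq \gamma$, so that the composite action reads
\[
R_i^{\ast} A^k_m = \begin{cases} f_A(i,\gamma)\, A^i_m & k = i \text{ and } v^i_m \in P_{i,\gamma}, \\ A^k_m & \text{otherwise.} \end{cases}
\]
Substituting this into the monomial $p^{\ast}(X^j_n) = \prod_{v^k_m} (A^k_m)^{\ve_{v^j_n v^k_m}}$ yields
\[
R_i^{\ast} p^{\ast}(X^j_n) \;=\; p^{\ast}(X^j_n) \cdot \prod_{\gamma = 1}^{d_i/d} f_A(i,\gamma)^{E^{\gamma}(j,n)}, \qquad E^{\gamma}(j,n) := \sum_{v^i_m \in P_{i,\gamma}} \ve_{v^j_n\, v^i_m}.
\]
Thus the assertion is equivalent to the combinatorial statement $E^{\gamma}(j,n) = 0$ for all $j, n, \gamma$, namely that the signed count of arrows between $v^j_n$ and the oriented cycle $P_{i,\gamma}$ vanishes.

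I would then dispose of this combinatorial statement by a case distinction on the Dynkin relation between $j$ and $i$. If $C_{ij} = 0$ with $j \neq i$, the columns $j$ and $i$ are disconnected in $Q(\mg)$, so $E^{\gamma}(j,n) = 0$ trivially. If $j = i$, the only arrows in column $i$ incident to $v^i_n$ are $v^i_{n - d_i} \to v^i_n \to v^i_{n + d_i}$; since each circle $P_{i,\gamma}$ is generated by $\pm d_i$-shifts, both neighbours lie on the same circle $P_{i,\gamma_0}$ as $v^i_n$, contributing $+1$ and $-1$ to $E^{\gamma_0}(i,n)$ and nothing to $E^{\gamma}$ for $\gamma \neq \gamma_0$. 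The remaining case is $j \neq i$ with $C_{ij} \neq 0$, where the construction places exactly one outgoing and one incoming arrow between $v^j_n$ and column $i$; here one must verify that those two partner vertices in column $i$ always lie on the \emph{same} circle $P_{i,\gamma}$, so that their contributions $+1$ and $-1$ cancel.

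The main (and essentially only) obstacle is this last verification, but the explicit descriptions of $Q(\mg)$ make it short. In the simply-laced types $A_\ell, D_\ell, E_{6,7,8}$ the partners of $v^j_n$ are $v^i_n$ and $v^i_{n \pm 1}$, both on the unique cycle $P_i$. In $B_\ell$ between columns $\ell-1$ and $\ell$ the partners of $v^{\ell-1}_m$ are $v^\ell_{m \pm 1/2}$, both on the single cycle $P_\ell$, while between columns $i,j \leq \ell-1$ the half-integrality class of $n$ is preserved, so partners share a circle. In $C_\ell$ between $\ell-1$ and $\ell$ the partners $v^\ell_m, v^\ell_{m-2}$ share the parity of $m$ and so lie on the same $P_{\ell,\gamma}$; $F_4$ is a patchwork of the $B_3$- and $A$-type balances. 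Finally, in $G_2$ the partners $v^2_m, v^2_{m-3}$ of $v^1_m$ share their residue modulo $3$, hence lie on the same $P_{2,\gamma}$, and symmetrically for $v^2_m$. In every subcase the contributions $+1$ and $-1$ fall into the same factor and cancel, so $E^{\gamma}(j,n) = 0$ and the proposition follows.
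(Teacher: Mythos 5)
Your proposal is correct and takes essentially the same approach as the paper: the reduction via Theorem~\ref{R-action}(2) to the vanishing of the signed arrow count $E^{\gamma}(j,n)$ between a vertex and each circle $P_{i,\gamma}$ is exactly the paper's dichotomy that a circle meets a given vertex in either no arrows or one incoming and one outgoing arrow, which the paper packages instead as the statement that $p^\ast(X^i_n)$ lies in the subfield generated by the $R$-invariant ratios $\xi^j_m = A^j_{m-d_j}/A^j_m$.
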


\begin{proof}
The simply-laced cases are already shown in \cite[Lemma 3.12]{IIO19}.
We give an alternative proof which works for general $\mg$.
Note that $\xi^i_n := A^i_{n-d_i}/A^i_n \in \C(\mathbf{A})$ is invariant under the action of any $R_j$. Write $\C(\xi)$ for the subfield of $\C(\mathbf{A})$ generated by the $\xi^i_n$. 
It holds that $p^\ast (X^i_n)$ belongs to $\C(\xi)$, since 
for a circle $P_{j,k}$ in the quiver $Q_m(\mg)$ and the vertex $v^i_n$ we have only two possibilities:
\begin{enumerate}
\item[(i)]
there is no arrow among $P_{j,k}$ and $v^i_n$,
\item[(ii)]
there are only two arrows among $P_{j,k}$ and $v^i_n$ such that 
$v^j_m \to v^i_n$ and $v^j_{m'} \leftarrow v^i_n$ for some $v^j_m, v^j_{m'} \in P_{j,k}$.
\end{enumerate}   
Thus the claim follows.  
\end{proof}

From Theorem \ref{R-action} the following is obtained.
\begin{cor}\label{cor:tropR}
Let $\mathbb{P}_\trop(\mathbf{u})$ be the tropical semifield of rank $|I|$, where $\mathbf{u} = (u_n^i; v_n^i \in I)$. For $i \in S$ and a tropical $X$-seed $(Q_m(\mg), \mathbf{x})$ where all $x^i_n$ for $n \in d \Z / d'm \Z$ have positive tropical sign, the tropical action $R^\trop_i$ is expressed as follows:
when $d_i \geq d_j$,
\begin{align}\label{eq:trop-R1}
(R^\trop_i)^\ast (x_n^j) = 
\begin{cases}
(x^i_{n-d_i})^{-1} & j = i, 
\\
x^j_{n} x^i_{n-d_i} & v^j_n \leftarrow v^i_n,
\\
x^j_{n} x^i_{n} & v^j_n \rightarrow v^i_n,
\\
x^j_{n} x^i_{n-\frac{1}{2}} & v^j_n \rightarrow v^i_{n-\frac{1}{2}},
\\
x^j_n & otherwise,
\end{cases}
\end{align}
and when $d_i < d_j$, 
\begin{align}\label{eq:trop-R2}
(R^\trop_i)^\ast (x^j_n) =
\begin{cases}
x^j_n x^i_{n-\frac{1}{2}} x^i_{n} & (i,j) =
\begin{cases} (\ell,\ell-1); B_\ell, \\ (3,2); F_4, \end{cases}
\\
x^\ell_n x^{\ell-1}_{n} x^{\ell-1}_{n+1} & (i,j) = (\ell-1,\ell); C_\ell,
\\
x^2_n x^{1}_{n} x^{1}_{n+1} x^{1}_{n+2} & (i,j) = (1,2); G_2,
\\
x^j_n & otherwise.
\end{cases}
\end{align}
\end{cor}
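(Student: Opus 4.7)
The plan is to deduce the corollary as a purely formal tropicalization of the rational formulas (\ref{eq:RonX}) and (\ref{eq:RonX-exc}) given in Theorem \ref{R-action}. The underlying principle is that every mutation sequence expresses each $X$-variable as a subtraction-free rational function in the initial $X$-variables (see (\ref{eq:X-mutation})), and the corresponding tropical action is obtained by replacing each ordinary sum by the tropical sum $\oplus$ (see (\ref{eq:trop-mutation})). Since Theorem \ref{R-action} already puts the action of $R(P_{i,\gamma_i})^\ast$ on $X$-variables in closed form, we may tropicalize the \emph{end} formulas directly, without having to track intermediate seeds along the mutation sequence $R(P_{i,\gamma_i})$.

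The key input is the following elementary observation in $\mathbb{P}_\trop(\mathbf{u})$: if $y\in\mathbb{P}$ is positive, then $1\oplus y=1$, since the exponent vector of $1$ (all zeros) is coordinate-wise $\leq$ that of $y$. By hypothesis, every $x^i_n$ lying on a circle $P_{i,\gamma_i}$ is positive, hence so is every monomial $x^i_n \, x^i_{n-d_i}\cdots x^i_{n-kd_i}$ appearing in the tropicalization of
\[
f_X(i,n) \;=\; 1 + X^i_n + X^i_n X^i_{n-d_i} + \cdots + X^i_n X^i_{n-d_i}\cdots X^i_{n-(p-2)d_i}.
\]
Therefore $f_X(i,n)$ tropicalizes to $1$ for every $v^i_n\in P_{i,\gamma_i}$, and similarly for the shifted quantities $f_X(i,n-d_i)$, $f_X(i,n-2d_i)$, $f_X(i,n-d_i-\tfrac{1}{2})$, etc.

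With this reduction the five subcases of (\ref{eq:RonX}) tropicalize term-by-term into the five subcases of (\ref{eq:trop-R1}): e.g.\ the $j=i$ case collapses to $1/(x^i_{n-d_i}\cdot 1)=(x^i_{n-d_i})^{-1}$, and the three adjacency subcases drop their $f_X$ ratios and reduce to the claimed monomials $x^j_n x^i_{n-d_i}$, $x^j_n x^i_n$, $x^j_n x^i_{n-1/2}$. The same procedure applied to (\ref{eq:RonX-exc}) yields (\ref{eq:trop-R2}). Finally, since the circles $P_{i,1},\dots,P_{i,d_i/d}$ are vertex-disjoint, the operators $R(P_{i,\gamma_i})$ are pairwise commuting and each acts nontrivially only on vertices belonging to or adjacent to its own circle; composing them gives the global formula for $R^\trop_i=R(P_{i,1})^\trop\circ\cdots\circ R(P_{i,d_i/d})^\trop$ on all $v^i_n$ with $n\in d\Z/d'm\Z$.

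The only thing that needs a moment's care is the book-keeping in the non-simply-laced cases, where one must check that each quiver-local situation falls into exactly one of the listed subcases (for instance, in $C_\ell$ the pair of arrows $v^\ell_n\to v^{\ell-1}_n$ and $v^{\ell-1}_{n+2}\to v^\ell_n$ is accounted for by a single instance of the ``$v^j_n\leftarrow v^i_n$'' subcase via the explicit lemma on $R(P)^\ast(X_a)$). This is routine bookkeeping, not a genuine obstruction, once the tropicalization of $f_X(i,n)$ to $1$ has been established.
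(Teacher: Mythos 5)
Your proposal is correct and matches the paper's intended argument: the paper derives Corollary \ref{cor:tropR} directly from Theorem \ref{R-action} by tropicalizing its subtraction-free closed-form expressions, exactly as you do, with the key point being that positivity of the $x^i_n$ forces $f_X(i,n)$ to tropicalize to $1$. The paper gives no further detail beyond ``From Theorem \ref{R-action} the following is obtained,'' so your write-up is simply a fuller account of the same route.
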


Finally we get to the realization of the Weyl group $W(\mg)$:

\begin{thm}\label{thm:R-Weyl}
\begin{enumerate}
\item[(1)]
The operators $R_i; ~i \in S$ generate an action of the Weyl group $W(\mg)$ on the seed. Precisely, we have $(R_i R_j)^{m_{ij}} = 1$ for $i,j \in S$.
\item[(2)]
Let $s_{i_p} s_{i_{p-1}} \cdots s_{i_1}$ be a reduces expression of $w \in W(\mg)$. Then $R(w) := R_{i_p} R_{i_{p-1}} \cdots R_{i_1}$ is a green sequence for $Q_m(\mg)$. When $w$ is the longest element in $W(\mg)$, then $R(w)$ is the maximal green
sequence for $Q_m(\mg)$. 
\end{enumerate}
\end{thm}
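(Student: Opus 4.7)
The plan is to exploit Theorem~\ref{thm:periodicity}: to prove $(R_i R_j)^{m_{ij}} = \mathrm{id}$ as a transformation of seeds in part~(1), it is equivalent to prove the same identity as a transformation of tropical $X$-seeds. By Corollary~\ref{cor:tropR}, the action $R_i^{\trop}$ on the positive cone is given by monomials in the tropical variables, hence is linear in the log coordinates $\log x^j_n$. I would set up an identification $\log x^i_n \mapsto \alpha^i_n$ between $\bigoplus_{i,n}\Z\log x^i_n$ and an appropriately labelled copy of the root lattice of $\mg$, under which \eqref{eq:trop-R1}--\eqref{eq:trop-R2} becomes the standard simple reflection $r_i$. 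In the simply-laced case this identification is essentially the one used in \cite{IIO19}; for non-simply-laced $\mg$, the shift by $d_i$ together with the different circle lengths arising in \eqref{PathB1}--\eqref{PathC2} are precisely what is needed for the matching to respect the symmetrization $\mathbf{D}$. With this identification in hand, $(R_i^{\trop} R_j^{\trop})^{m_{ij}} = \mathrm{id}$ reduces to the defining relations of $W(\mg)$, proving part~(1).

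For part~(2), I would expand each $R_i$ into its underlying mutations. Since the $R(P_{i,\gamma})$ for $\gamma = 1,\dots,d_i/d$ commute and act at disjoint vertex sets, it suffices to verify greenness for one cycle $R(P)$. Unfolding $R(P) = M(P)^{-1}\circ (v_{p-1},v_p)\circ \mu_p\mu_{p-1}\circ M(P)$ with $M(P) = \mu_{p-2}\cdots\mu_1$ and tracking the tropical sign at each mutating vertex, starting from the seed $(Q_m(\mg),\mathbf{u})$, I would verify (exactly as for cycle quivers treated in \cite{MOT19,IIO19}) that every mutation is green. For a reduced expression $w = s_{i_p}\cdots s_{i_1}$, greenness of $R(w) = R_{i_p}\cdots R_{i_1}$ then follows inductively: by the identification from part~(1), the tropical sign of $x^j_n$ after applying $R_{i_{k-1}}\cdots R_{i_1}$ agrees with the sign of $s_{i_{k-1}}\cdots s_{i_1}\alpha^j_n$ in the root system, and the standard fact that reduced expressions never shorten length forces positivity of the relevant tropical variables at the next step $R_{i_k}$.

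For the maximality claim, when $w = w_0$ is the longest element, every positive root is sent to a negative root, so the identification forces every tropical $X$-variable produced by $R(w_0)$ to have negative sign; hence the final seed lies in $\mathcal{X}^-_{Q_m(\mg)}(\mathbb{P})$ and $R(w_0)$ is maximal green.

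The main obstacle will be calibrating the identification $\log x^i_n \leftrightarrow \alpha^i_n$ correctly for the non-simply-laced types: the circles $P_{i,\gamma}$ come in the varied lengths recorded in \eqref{PathA}--\eqref{PathC2}, the indices $n$ range over $d\Z/md'\Z$ rather than $\Z/m\Z$, and the tropical formulas \eqref{eq:trop-R2} contain the additional multi-factor monomials at the exceptional arrows. Getting the root-system correspondence to intertwine every $R_i^{\trop}$ with $r_i$ simultaneously across all circles is the place where the novel combinatorics of $Q_m(\mg)$ for non-simply-laced $\mg$ does the real work; once this calibration is set up, parts~(1) and~(2) both follow from essentially formal root-system arguments.
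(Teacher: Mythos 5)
Your reduction of part (1) to the tropical setting via Theorem \ref{thm:periodicity} matches the paper, and your part (2) is essentially the paper's argument: greenness of each individual cycle mutation is quoted from \cite{Bu14} and \cite{ILP16}, and an embedding of the root lattice into tropical monomials ($\alpha_i \mapsto \prod_n x^i_n$) is used to track signs along a reduced word and to force all signs negative for the longest element. The problem is part (1). You propose an identification $\log x^i_n \mapsto \alpha^i_n$ with ``an appropriately labelled copy of the root lattice'' under which each $R_i^{\trop}$ becomes the standard simple reflection $r_i$, so that $(R_iR_j)^{m_{ij}}=1$ ``reduces to the defining relations of $W(\mg)$.'' No such identification can exist: the tropical lattice has one generator for every vertex $v^i_n$ while $L(\mg)$ has rank $\ell$, and, more to the point, the formulas \eqref{eq:trop-R1}--\eqref{eq:trop-R2} shift the index $n$ (e.g.\ $x^i_n \mapsto (x^i_{n-d_i})^{-1}$). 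Any map that forgets $n$ --- such as the paper's $\phi$ --- only yields the Coxeter relations on the aggregated monomials $\prod_n x^i_n$, which is necessary but not sufficient: $(R_iR_j)^{m_{ij}}$ could still act on the individual variables by a nontrivial shift or permutation of the $n$-indices. Whether the accumulated shifts cancel is exactly the nontrivial content; compare Remark \ref{rem:braidacion} and \cite{CM05}, where the closely analogous operators on the $\ell$-weight lattice satisfy only the braid relations and are not involutions.

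The paper therefore verifies the rank-two relations by explicit computation: the $m_{ij}=3$ case is quoted from \cite{ILP16}, and the $B_2$ and $G_2$ cases are computed letter by letter from Corollary \ref{cor:tropR}, checking that the two orderings of the alternating word produce literally the same piecewise-linear map, index shifts included (in type $B_2$ both four-letter words give $x^i_n\mapsto(x^i_{n-\frac32})^{-1}$, and in type $G_2$ both six-letter words give $x^i_n\mapsto(x^i_{n-6})^{-1}$). Your proposal never supplies this verification, and the ``formal root-system argument'' cannot replace it. Part (2) survives because the only input it needs from the identification is the equivariance of the aggregated monomials and the resulting sign statements, which is what the paper actually proves; but as written your induction leans on the unproven full intertwining, so you should restate it in terms of the rank-$\ell$ embedding $\phi$ rather than a vertex-by-vertex correspondence.
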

\begin{proof}
This claim in the case of simply-laced $\mg$ appeared in \cite{IIO19}.
We prove the other cases.
Let $\mathbb{P} = \mathbb{P}_\trop(\mathbf{u})$ be the tropical semifield of rank $|I|$. In this proof we write $Q$ for $Q_m(\mg)$.
\\
(1) Due to Theorem \ref{thm:periodicity}, it is satisfactory to check the claim for the tropical $X$-seed in $\mathbb{P}$.
The case of $(i,j)$ as  $C_{ij} C_{ji}=1$ (i.~e. $m_{ij} = 3$) is done in \cite{ILP16}. The other cases are checked case by case using Corollary \ref{cor:tropR}, and we shall demonstrate the case of $(d_1,d_2) = (1,\frac{1}{2})$ for $B_2$ and $(d_1,d_2) = (1,3)$ for $G_2$.  In the first case, for $\mathbf{x} = (x^i_n) \in \mathcal{X}_Q^+(\mathbb{P})$ we have 
\begin{align*}
&(x^1_n,x^2_n)_{n \in \frac{1}{2}\Z/m \Z} \stackrel{R_1^\ast}{\longmapsto} ((x^1_{n-1})^{-1}, x^2_n x^1_{n-\frac{1}{2}})
\\
&\stackrel{R_2^\ast}{\longmapsto} ((x^1_{n-1})^{-1} x^2_{n} x^1_{n-\frac{1}{2}} x^2_{n-\frac{1}{2}} x^1_{n-1}, (x^2_{n-\frac{1}{2}} x^1_{n-1})^{-1}) 
= (x^2_n x^2_{n-\frac{1}{2}} x^1_{n-\frac{1}{2}}, (x^2_{n-\frac{1}{2}} x^1_{n-1})^{-1})
\\
&\stackrel{R_1^\ast}{\longmapsto}
((x^2_{n-1}x^2_{n-\frac{3}{2}} x^1_{n-\frac{3}{2}})^{-1}, (x^2_{n-\frac{1}{2}} x^1_{n-1})^{-1}x^2_{n-\frac{1}{2}}x^2_{n-1} x^1_{n-1} )
= ((x^2_{n-1}x^2_{n-\frac{3}{2}} x^1_{n-\frac{3}{2}})^{-1}, x^2_{n-1})
\\
&\stackrel{R_2^\ast}{\longmapsto}
((x^2_{n-1}x^2_{n-\frac{3}{2}} x^1_{n-\frac{3}{2}})^{-1} x^2_{n-1} x^2_{n-\frac{3}{2}}, (x^2_{n-\frac{3}{2}})^{-1}) = ((x^1_{n-\frac{3}{2}})^{-1}, (x^2_{n-\frac{3}{2}})^{-1})_{n \in \frac{1}{2}\Z/m \Z}.
\end{align*} 
This coincides with $(R_2 R_1R_2 R_1)^\ast (x^1_n,x^2_n)_{n \in \Z/2}.$
In the second case of $G_2$, we have
\begin{align*}
&(x^1_n,x^2_n)_{n \in \Z/3m \Z} \stackrel{R_1^\ast}{\longmapsto} 
((x^1_{n-1})^{-1}, x^2_n x^1_n x^1_{n+1} x^1_{n+2})
\\
&\stackrel{R_2^\ast}{\longmapsto}
((x^1_{n-1})^{-1} x^2_{n-3} x^1_{n-3} x^1_{n-2} x^1_{n-1}, (x^2_{n-3} x^1_{n-3} x^1_{n-2} x^1_{n-1})^{-1})  
\\
& \qquad \quad = (x^2_{n-3} x^1_{n-3} x^1_{n-2}, (x^2_{n-3} x^1_{n-3} x^1_{n-2} x^1_{n-1})^{-1})
\\
&\stackrel{R_1^\ast}{\longmapsto} 
((x^2_{n-4} x^1_{n-4} x^1_{n-3})^{-1}, x^2_{n-2} x^1_{n-2} x^1_{n-1} x^2_{n-1} x^1_{n})
\\
&\stackrel{R_2^\ast}{\longmapsto}
(x^2_{n-5} x^1_{n-5}, (x^2_{n-5} x^1_{n-5} x^1_{n-4} x^2_{n-4} x^1_{n-3})^{-1})\\\
&\stackrel{R_1^\ast}{\longmapsto}
((x^2_{n-6} x^1_{n-6})^{-1}, x^2_{n-3}) 
\stackrel{R_2^\ast}{\longmapsto}
((x^1_{n-6})^{-1}, (x^2_{n-6})^{-1})_{n \in \Z/3m\Z},
\end{align*}
which coincides with $(R_2 R_1 R_2 R_1 R_2 R_1)^\ast (x^1_n,x^2_n)_{n \in \Z/3m \Z}$. 
\\
(2) It is known that each $R_i$ is green \cite{Bu14} (also see the proof of \cite[Lemma 7.5]{ILP16}).
Let $L(\mg) = \oplus_{i \in S} \Z \alpha_i$ be the root lattice,
and let $L(\mg)_+$ be its nonnegative part, i.~e. $L(\mg)_+ = \oplus_{i \in S} \Z_{\geq 0} \alpha_i$. 
Define an embedding $\phi : L(\mg) \to \text{Fun}(\mathcal{X}_{Q}^+(\mathbb{P}))$ given by 
$$
  \sum_{i \in S} n_i \alpha_i \mapsto \prod_{i \in S} \prod_{n \in d\Z/md'\Z} (x^i_n)^{n_i}.
$$
For $f \in \text{Fun}(\mathcal{X}_{Q}^+(\mathbb{P}))$, we write $f(\mathbf{\xi})$ for the evaluation of $f$ at $\xi \in \mathcal{X}_{Q}^+(\mathbb{P})$.
Remark that $v \in L(\mg)$ belongs to $L(\mg)_+$ if and only if $\phi(v)(\mathbf{u})$ has positive tropical sign.

From Corollary \ref{cor:tropR}, we obtain 
$$
  (R^\trop_i)^\ast(\phi(\alpha_j)) = 
  \begin{cases}
    \phi(\alpha_i)^{-1} & j=i,
    \\
    \phi(\alpha_j) \phi(\alpha_i)^{-C_{ij}} & \text{otherwise}.   
  \end{cases}
$$
For a reduced expression $s_{i_p} s_{i_p-1} \cdots s_{i_1}$ of $w \in W(\mg)$, write $w_{(j)}$ for $s_{i_j} \cdots s_{i_2} s_{i_1}$ for $j=1,\ldots,p$.
Then, for $j = 1,\ldots,p-1$ the followings are proved:
\begin{enumerate}
\item[(i)] 
$\phi(w_{(j)}^{-1}\alpha_i)(\mathbf{u}) = \phi(\alpha_i)(R^\trop(w_{(j)})(\mathbf{u}))$,
\item[(ii)] 
$R^\trop(w_{(j)})(\mathbf{u})^{s_{j+1}}_n$ has positive tropical sign for all $n \in d \Z/d'm\Z$,   
\end{enumerate}
in the same manner as \cite[Proposition 3.17]{IIO19}.
It follows that the embedding $\phi$ is $W(\mg)$-equivalent, i.~e.
$R^\trop(w)^\ast (\phi(v)) = \phi(w^{-1}v)$ for $v \in L(\mg)$ and $w \in W(\mg)$.

From (ii), it follows that $R(w)$ is a green sequence.
When $w$ is the longest element in $W(\mg)$, we have $w \alpha_i = -\alpha_{i^\ast}$ for $i \in S$, where $i^\ast$ is the image of $i$ by the Dynkin involution on $S$. Hence $\phi(w \alpha_i)(\mathbf{u})$ has negative tropical sign, and it follows that $R^\trop(w)(\mathbf{u}) \in \mathcal{X}^-_Q(\mathbb{P})$.
\end{proof}

\begin{remark}
The mutation sequence $R(P)$ \eqref{eq:R-op} first appeared in \cite{Bu14} in studying maximal green sequence for an oriented circle $P$. It is also applied to study the cluster realization of the geometric $R$-matrices \cite{ILP16}, the cluster realizaion of the Weyl groups \cite{IIO19}, and the higher Teichm\"uller theory \cite{GS16,IIO19}.
When $\mg$ has non-simply-laced Dynkin diagram, Theorem \ref{thm:R-Weyl} gives another cluster realization of the Weyl group than \cite{IIO19}. 
\end{remark}

\subsection{Remark on the case of affine Lie algebras}
\label{subsec:affine}

We naturally extend the infinite quiver $Q(\mg)$ to that for the corresponding   non-twisted affine Lie algebra $\hat{\mg}$. 
For the simply-laced cases $\hat{\mg} = A_\ell^{(1)}, D_\ell^{(1)}, E_{6,7,8}^{(1)}$, $Q(\hat{\mg})$ is defined in the same way as \cite{IIO19}.
We briefly explain the other cases here. 
  
Define $\hat{S} := S \cup \{0\}$, and let $\hat{I}:= \{v^i_n; ~i \in \hat{S}, n \in d\Z\}$ be the vertex set of $Q(\hat{\mg})$.
In the case of $B_\ell^{(1)}$ of $\ell \geq 3$, we add vertices 
$\{v^0_n; ~ n \in \frac{1}{2} \Z \}$ to $Q(B_\ell)$ with arrows 
$$
  v^0_n \to v^0_{n+1}, ~~ v^2_n \to v^0_n, ~~ v^0_n \to v^2_{n-1}; ~n \in \frac{1}{2}\Z.
$$
In the case of $C_\ell^{(1)}$ of $\ell \geq 2$, we add vertices 
$\{v^0_n; ~ n \in \Z \}$ to $Q(C_\ell)$ with arrows 
$$
  v^0_n \to v^0_{n+2}, ~~ v^1_n \to v^0_n, ~~ v^0_n \to v^1_{n-2}; ~n \in \Z.
$$
When $\ell=2$, we identify $Q(B_2^{(1)})$ with $Q(C_2^{(1)})$.
In the case of $F_4^{(1)}$, we add vertices 
$\{v^0_n; ~ n \in \frac{1}{2} \Z \}$ to $Q(B_\ell)$ with arrows 
$$
  v^0_n \to v^0_{n+1}, ~~ v^1_n \to v^0_n, ~~ v^0_n \to v^1_{n-1}; ~n \in \frac{1}{2}\Z.
$$
Finally, in the case of $G_2^{(1)}$ we add vertices 
$\{v^0_n; ~ n \in \Z \}$ to $Q(G_2)$ with arrows 
$$
  v^0_n \to v^0_{n+3}, ~~ v^2_n \to v^0_n, ~~ v^0_n \to v^2_{n-3}; ~n \in \Z.
$$

We further define the $m$-periodic quiver $Q_m(\hat{\mg})$ of the vertex set 
$\hat{I} = \{v^i_n; ~i \in \hat{S}, n \in d\Z/d'm \}$, in the same manner as $Q_m(\mg)$. In $Q_m(\hat{\mg})$ we have new circles $P_{0,k}$ and mutation sequences $R(P_{0,k})$ for $k = 1,\ldots, d'/d$. 
Then the generators of the Weyl group $W(\hat{\mg}) \subset \Gamma_{Q_m(\hat{\mg})}$ are obtained by adding $R_0 := \prod_{k=1}^{d'/d} R(P_{0,k})$ to those of $W(\mg)$.

Remark that for the dual $\mg^\vee$ of $\mg$, whose Cartan matrix is the transpose of $\mathbf{C}$, 
the infinite quivers, the $m$-periodic quivers and the Weyl group actions are defined similarly. This is the same for $\hat{\mg}^\vee$.

\begin{remark}
In [IIO19] it is introduced that the cluster realization of the Weyl groups for a symmetrizable Kac-Moody Lie algebras. This includes the cases of $\hat{\mg} = B_n^{(1)}, C_n^{(1)}, F_4^{(1)}, G_2^{(1)}$ with different periodic quivers from the above $Q_m(\hat{\mg})$.
\end{remark}

\section{Application to the $q$-characters}

After recalling the basic notions of the $q$-characters in \S \ref{subsec:q-ch}, we first present our results in the simplest case of $\mg = A_\ell$, and next explain the case of general $\mg$.

\subsection{$q$-characters of finite dimensional representations of $U_q(\hat{\mg})$}\label{subsec:q-ch}

We briefly recall the notion of the $q$-character and some related topics following \cite{FR98, FR99}.
We continue to use the notations in \S \ref{subsec:Lie-alg}.
Let $\hat{\mg}$ be the non-twisted affine Lie algebra corresponding to $\mg$,
and $U_q(\hat{\mg})$ be the quantum affine algebra.
For a while we assume that $q \in \C^\times$ is not a root of unity.
\\

\paragraph{\textbf{The $q$-characters}}
Write $\mathrm{Rep} \,U_q(\hat{\mg})$ for the Grothendieck ring of the category of the finite dimensional representation of $U_q(\hat{\mg})$ with homomorphisms of $U_q(\hat{\mg})$-modules.
The $q$-character is given as a ring homomorphism: 
$$
  \chi_q : \mathrm{Rep}\,U_q(\hat{\mg}) \to \mathbf{Y} := \Z[Y_{i,a_i}^{\pm 1}]_{i \in S, a_i \in \C^\times} \subset U_q(\tilde{\mathfrak{h}}[[z]]).
$$
See \cite[Section 3]{FR99} for the definition of $\tilde{\mathfrak{h}}$ and the explicit form of $Y_{i,a}$. 

For each $i \in S$ and $a \in \C^{\times}$, there is a unique irreducible representation $V_{\omega_i}(a) \in \mathrm{Rep} \,U_q(\hat{\mg})$, called the $i$th {\em fundamental representation} \cite{CP94}.
When we restrict this representation to $U_q(\mg) \subset U_q(\hat{\mg})$, it has the highest weight $\omega_i$. 
An important fact is that any irreducible finite-dimensional representation of $U_q(\hat{\mg})$ is isomorphic to a quotient of a submodule of the tensor product of the fundamental representations \cite[Corollary 1.4]{CP94}.
It is conjectured in \cite{FR99} and proved in \cite{FM99} that 
the image of $\chi_q$ is equal to
\begin{align}\label{eq:image-qch}
  \bigcap_{i \in S} \left(\Z[Y_{j,a}^{\pm 1}]_{j \neq i, a \in \C^\times} \otimes  \Z[Y_{i,b}(1+A_{i,bq_i}^{-1})]_{b \in \C^\times} \right),
\end{align}
where 
\begin{align}\label{q-A}
  A_{i,a} = Y_{i, a q_i} Y_{i,a q_i^{-1}}
  \prod_{j:C_{ji}=-1} Y_{j,a}^{-1}  
  \prod_{j:C_{ji}=-2} Y_{j,a q_j}^{-1} Y_{j,a q_j^{-1}}^{-1} 
  \prod_{j:C_{ji}=-3} Y_{j,a q_j^2}^{-1} Y_{j,a}^{-1} Y_{j,a q_j^{-2}}^{-1},
\end{align}
with $q_i = q^{d_i}$.

\begin{remark}\label{rem:q-root}
When $q$ is a root of unity $\ve$ as $\ve^n = 1$, the $q$-character map $\chi_q$ gives the character map $\chi_\ve : \mathrm{Rep} U_\ve^{\mathrm{res}}(\hat{\mg})\to \Z[Y_{i,a_i}^{\pm 1}]_{i \in S, a_i \in \C^\times}$ by setting $q$ to be $\ve$ \cite[Theorem 3.2]{FM01}. 
\end{remark}

We note that the rational functional fields $\C(Y_{i,a};i \in S, a \in \C^\times)$ and $\C(A_{i,a};i \in S, a \in \C^\times)$ have Poisson structure \cite[Appendix A]{FR98}:
\begin{align}
  \label{Poisson-A}
  &\{ A_{i,a}, A_{j,b} \}
  =
  {B}_{ij}(\frac{a}{b}) \, A_{i,a} A_{j,b},
  \\
  \label{Poisson-Y}
  &\{ Y_{i,a}, Y_{j,b} \}
  =
  {M}_{ij}(\frac{a}{b}) \, Y_{i,a} Y_{j,b},
\end{align}
where 
\begin{align*}
  &{B}_{ij}(z)
  =
  \sum_{m \in \mathbb{Z}} ~ {B}^{q^m}_{ij} z^m
  =
  \delta(q^{{B}_{ij}} z) - \delta(q^{-{B}_{ij}} z),
  \\
  &{M}_{ij}(z)
  =
  \sum_{m \in \mathbb{Z}} ~
  \bigl( \mathbf{D}^{q^m} (\mathbf{B}^{q^m})^{-1}
\mathbf{D}^{q^m}\bigr)_{ij} z^m.
\end{align*}
Here we set
$\delta(z) = \sum_{m \in \mathbb{Z}} \, z^m $, and matrices
$\mathbf{B}^q = ({B}^q_{ij})_{i,j \in
  \mathcal{I}}$
and
$\mathbf{D}^q = ({D}^q_{ij})_{i,j \in I}$
are defined as
\begin{align*}
  {B}^q_{ij}
  =
  q^{{B}_{ij}} - q^{-{B}_{ij}},
  \qquad
  {D}^q_{ij}
  =
  \delta_{ij} (q^{d_i} - q^{-d_i}).
  \\
\end{align*}

\paragraph{\textbf{Screening operators}}
Following \cite[\S 7]{FR99} we introduce the screening operators which appears in the $q$-deformation of the $\mathcal{W}$-algebras for $\mg$. 
For $i \in S$, define linear operators $\mathcal{S}_i$ given by
\begin{align}\label{eq:q-screening}
  \mathcal{S}_i : \mathbf{Y} \to 
                 \mathbf{YS}_i := \oplus_{b \in \C^\times} \mathbf{Y} \otimes S_{i,b};
               ~Y_{j,a} \mapsto \delta_{i,j} Y_{i,a} S_{i,a}
\end{align}
and the Leibniz rule: $\mathcal{S}_i \cdot (f g) = (\mathcal{S}_i \cdot f) g + f (\mathcal{S}_i \cdot g)$.
The operators $\mathcal{S}_i$ are called screening operators.
We consider the quotient $\mathbf{YS}'_i$ of $\mathbf{YS}_i$ with relations
\begin{align}\label{eq:S-A}
  S_{i,a q_i^2} = A_{i,aq_i} S_{i,a}.
\end{align} 

The $q$-deformed $\mathcal{W}$-algebra for $\mg$ is defined as the intersection of the kernels Ker $\mathcal{S}_i$ of $\mathcal{S}_i$ for $i \in S$.
The following is an important theorem which connects the $q$-characters and $q$-deformed $\mathcal{W}$-algebras.

\begin{thm}[Conjectured in \cite{FR99} and proved in \cite{FM99}]
The image of the character map $\chi_q$ equals the intersection of Ker $\mathcal{S}_i$ for $i \in S$.
\end{thm}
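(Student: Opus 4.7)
The plan is to reduce the equality to the pointwise identity
\begin{equation*}
\ker \mathcal{S}_i = \Z[Y_{j,a}^{\pm 1}]_{j \neq i, a \in \C^\times} \otimes \Z[Y_{i,b}(1 + A_{i,bq_i}^{-1})]_{b \in \C^\times}
\end{equation*}
for each fixed $i \in S$, and then intersect over $i$. By the Frenkel--Mukhin description \eqref{eq:image-qch} of the image of $\chi_q$ quoted above from \cite{FM99}, such a pointwise identity for every $i$ immediately yields the theorem.

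The inclusion $\supseteq$ I would dispatch by a direct computation. Since $\mathcal{S}_i$ annihilates $Y_{j,a}^{\pm 1}$ for $j \neq i$ by \eqref{eq:q-screening}, and is a derivation, it suffices to verify that $\mathcal{S}_i(Y_{i,b}(1+A_{i,bq_i}^{-1}))=0$. Writing $A_{i,bq_i} = Y_{i,b} Y_{i,bq_i^2} \cdot N$ with $N$ free of $Y_{i,\cdot}$ by \eqref{q-A}, one finds $\mathcal{S}_i(A_{i,bq_i}^{-1}) = -A_{i,bq_i}^{-1}(S_{i,b} + S_{i,bq_i^2})$, and the quotient relation \eqref{eq:S-A} in the form $S_{i,bq_i^2} = A_{i,bq_i}S_{i,b}$ gives the cancellation
$$
Y_{i,b}\bigl(1+A_{i,bq_i}^{-1}\bigr)S_{i,b} - Y_{i,b}A_{i,bq_i}^{-1}\bigl(1+A_{i,bq_i}\bigr)S_{i,b} = 0.
$$

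For the inclusion $\subseteq$, I would adopt an $\mathfrak{sl}_2$-reduction perspective. Fixing $i$, view $\mathbf{Y}$ as a Laurent polynomial ring in the family $\{Y_{i,c}\}_{c \in \C^\times}$ over the coefficient ring $R_i := \Z[Y_{j,a}^{\pm 1}]_{j \neq i}$. The operator $\mathcal{S}_i$ sees only this family, and the relation \eqref{eq:S-A} couples indices $c$ within each $q_i^{2\Z}$-orbit of $\C^\times$. For $P \in \ker \mathcal{S}_i$, collecting the resulting linear relations orbit by orbit and inducting on the extremal $i$-support (the set of $k \in \Z$ for which $Y_{i,bq_i^{2k}}$ occurs in some monomial of $P$ for a fixed orbit representative $b$) reduces $P$ modulo $R_i$-multiples of translates of $Y_{i,b}(1+A_{i,bq_i}^{-1})$ to an element of strictly smaller support, ultimately landing in $R_i[Y_{i,b}(1+A_{i,bq_i}^{-1})]_{b \in \C^\times}$.

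The main obstacle lies in the induction step of the last paragraph: one must show that the extremal coefficient of $P$ factors through the precise $Y_{i,\cdot}$-content of a translate of $Y_{i,b}(1+A_{i,bq_i}^{-1})$, which requires controlling the $R_i$-dependent factors of $A_{i,\cdot q_i}$ coming from the $-C_{ji}$ exponents in \eqref{q-A}. This combinatorial check is tractable because $\mg$ is of finite type, so all $C_{ji}$ are bounded; but for a cleaner route one may bypass the induction by invoking the Frenkel--Mukhin reconstruction algorithm of \cite{FM99}, which reads off kernel elements from their dominant monomials and establishes the reverse inclusion directly.
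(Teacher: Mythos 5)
The paper does not prove this statement: it is imported verbatim from the literature (conjectured in \cite{FR99}, proved in \cite{FM99}), and the only related verification in the text is the Remark immediately following it, which checks that $Y_{i,b}(1+A_{i,bq_i}^{-1})$ lies in $\mathrm{Ker}\,\mathcal{S}_i$ using \eqref{q-A} and \eqref{eq:S-A}. So there is no in-paper argument to compare yours against, and I can only assess your reconstruction on its own terms. Your overall reduction is the standard and correct one: granting the description \eqref{eq:image-qch} of $\mathrm{Im}\,\chi_q$ (which the paper also quotes from \cite{FM99} as established), the theorem is equivalent to the single-index identity $\mathrm{Ker}\,\mathcal{S}_i=\Z[Y_{j,a}^{\pm 1}]_{j\neq i}\otimes\Z[Y_{i,b}(1+A_{i,bq_i}^{-1})]_{b}$. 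Your verification of the inclusion $\supseteq$ is complete and correct; it is exactly the computation recorded in the paper's Remark, resting on the factorization $A_{i,bq_i}=Y_{i,b}Y_{i,bq_i^2}N$ with $N$ free of the $Y_{i,\cdot}$, together with the quotient relation \eqref{eq:S-A}.

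The inclusion $\subseteq$, however, remains a genuine gap. The $\mathfrak{sl}_2$-reduction and the induction on extremal $i$-support is the right idea (it is essentially the argument of \cite{FR99}), but the step that carries all the weight --- showing that the extremal coefficient of a kernel element is forced to factor so that one can subtract an $R_i$-multiple of a product of the generators $Y_{i,c}(1+A_{i,cq_i}^{-1})$ and strictly decrease the support, landing in the \emph{polynomial} (not Laurent) ring in these generators --- is precisely the content of the $\mathfrak{sl}_2$ kernel computation, and you do not carry it out; you only assert it is ``tractable.'' Your fallback of invoking the Frenkel--Mukhin reconstruction algorithm is also misdirected: that algorithm concerns recovering $q$-characters from dominant monomials and is what establishes that \eqref{eq:image-qch} is contained in $\mathrm{Im}\,\chi_q$; the identification of $\mathrm{Ker}\,\mathcal{S}_i$ with the $i$th factor of \eqref{eq:image-qch} is due to \cite{FR99}, not to that algorithm. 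In short: right strategy, correct easy half, but the hard half is an outline whose central lemma is left unproved.
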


\begin{remark}
It is easy to see that $Y_{i,b}(1+A_{i,bq_i}^{-1})$ belongs to Ker $\mathcal{S}_i$, by using \eqref{q-A} and \eqref{eq:S-A}.
As the corollary of the above theorem, the $q$-deformed $\mathcal{W}$-algebra is generated by the $q$-characters of the fundamental representations $V_{\omega_i}(a)$.  
\end{remark}

\subsection{Correspondence of the $q$-deformation and the lattice: the case of $\mg = A_\ell$}\label{subsec:q-lattice-A}

For $a \in \C^\times / q^\Z$, define a rational functional field 
\begin{align}\label{eq:Y-def-A}
\mathbf{Y}_{a,q} := \C(Y_{i,a q^{2n+i-1}}; i \in S, n \in \Z).
\end{align}
Remark that the variables $A_{i,aq^{2n+i}}$ \eqref{q-A} belongs to $\mathbf{Y}_{a,q}$;
$$
  A_{i,aq^{2n+i}} 
  = \frac{Y_{i,a q^{2n+i-1}} Y_{i,a q^{2n+i+1}}}{Y_{i-1,a q^{2n+i}} Y_{i+1,a q^{2n+i}}}, 
$$
where $Y_{0, aq^{2n-1}} = Y_{\ell+1,a q^{2n+\ell}} = 1$ for $n \in \Z$.
Write $\mathbf{A}_{a,q}$ for the subfield of $\mathbf{Y}_{a,q}$ generated by 
$A_{i,aq^{2n+i}}; i \in S, n \in \Z$.

We introduce commuting variables $y_i(n)$ on a lattice $(i,n) \in S \times \Z$, and define a rational functional field
$\C(y) := \C(y_i(n); i \in S, n \in \Z)$. 
Further we define $a_i(n) \in \C(y)$ by
$$
  a_i(n) = \frac{y_i(n) y_{i}(n+1)}{y_{i-1}(n+1) y_{i+1}(n)} 
$$
for $i \in S$ and  $n \in \Z$, where we set 
$y_0(n) = y_{\ell+1}(n) = 1$ for $n \in \Z$. 
Let $\phi_a$ be an isomorphism of the rational functional fields given by
\begin{align}\label{qY-to-y}
  \phi_a : \mathbf{Y}_{a,q} \to \C(y); 
  ~ Y_{i,a q^{2n+i-1}} \mapsto y_i(n).
\end{align}
It is easy to see that $\phi_a(A_{i,a q^{2n+i}}) = a_i(n)$ which gives an isomorphism between rational functional subfields $\mathbf{A}_{a,q}$ and $\C(a) := \C(a_i(n); i \in S, n \in \Z)$. 

Further, we shall transform the (log-canonical) Poisson structure \eqref{Poisson-A} on $\mathbf{A}_{a,q}$ to that on $\C(a)$ by transforming $\delta(q^m)$ into $\delta_{m,0}$ with $\phi_a$, precisely, 
$$
  \{A_{i,aq^{2n+i}}, A_{j,aq^{2n'+j}} \} = 
  \left(\delta(q^{{B}_{ij}+(2n+i)-(2n'+j)}) - \delta(q^{-{B}_{ij}+(2n+i)-(2n'+j)})\right) A_{i,aq^{2n+i}} A_{j,aq^{2n'+j}}
$$   
is transformed into 
\begin{align}\label{eq:a-Poisson}
  \{a_i(n), a_j(n')\} =
  \left(\delta_{{B}_{ij}+(2n+i)-(2n'+j)} - \delta_{-{B}_{ij}+(2n+i)-(2n'+j)} \right) a_i(n) a_j(n').
\end{align}
Thus in the current case of $\mg = A_\ell$, the Poisson structure on $\C(a)$ is obtained as
\begin{align*}
&\{a_i(n), a_i(n') \} = (\delta_{n', n+1} - \delta_{n',n-1}) a_i(n) a_i(n');~ i \in S,
\\  
&\{a_i(n), a_{i+1}(n') \} = (\delta_{n', n-1} - \delta_{n',n}) a_i(n) a_{i+1}(n');~i \in S \setminus \{\ell \}
\end{align*} 
and the others are Poisson commutative.
On the other hand, the exchange matrix of the infinite quiver $Q(\mg)$ is expressed as
\begin{align*}
  &\ve_{v^i_n, v^i_{n'}} = \delta_{n',n+1} - \delta_{n',n-1}; ~ i \in S,
  \\
  &\ve_{v^i_n, v^{i+1}_{n'}} = \delta_{n',n-1} - \delta_{n',n}; ~ i \in S \setminus \{\ell\},  
\end{align*}
and the others are zero.
Thus we obtain the following:

\begin{prop}[Cf. \cite{IH00,I02}]\label{prop:Poisson-aX-A}
Let $\C(\mathbf{X})$ be the rational functional field generated by the $X$-variables for the infinite quiver $Q(A_\ell)$, equipped with the Poisson structure \eqref{Poisson-XA}. 
Define an isomorphism of rational functional fields $\beta : \C(\mathbf{X}) \to \C(a)$ by $X^i_n \mapsto a_i(n)^{-1}$. 
Then $\beta$ is a Poisson map. 
\end{prop}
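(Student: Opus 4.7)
The plan is to verify directly that the isomorphism $\beta$ intertwines the two log-canonical Poisson brackets, by comparing their structure constants entry by entry. Both brackets take the form $\{u_\alpha,u_\beta\}=M_{\alpha\beta}\,u_\alpha u_\beta$ for some antisymmetric constant matrix $M$. Since the Leibniz rule gives $\{u^{-1},v^{-1}\}=(uv)^{-2}\{u,v\}$ for any two generators of a log-canonical Poisson field, the assignment $X^i_n \mapsto a_i(n)^{-1}$ will be Poisson precisely when the matrix of coefficients on the $X$-side equals that on the $a$-side. Thus the whole claim reduces to matching two explicit matrices indexed by $S\times\Z$.

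First I would read off the structure constants on the $a$-side from \eqref{eq:a-Poisson}, specialized to $A_\ell$ where $B_{ij}=C_{ij}$. This is essentially already done in the paragraph before the statement: one finds the coefficient $\delta_{n',n+1}-\delta_{n',n-1}$ for pairs $(i,n),(i,n')$, the coefficient $\delta_{n',n-1}-\delta_{n',n}$ for pairs $(i,n),(i+1,n')$, and $0$ whenever $|i-j|\ge 2$. Second, I would read off the exchange matrix of $Q(A_\ell)$ straight from its definition in the bullets of Section 3 (and Figure \ref{TQAn}): the arrows $v^i_n \to v^i_{n+1}$, $v^{i+1}_n \to v^i_n$, and $v^i_{n+1}\to v^{i+1}_n$ give, respectively, the same three constants, and there are no arrows between columns at distance $\ge 2$. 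The two tables coincide, so together with the Leibniz identity above $\beta$ is a Poisson map.

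The main \emph{obstacle} is really one of bookkeeping: the shift convention in \eqref{eq:Y-def-A}, namely $a\mapsto a q^{2n+i-1}$, is precisely calibrated so that the Kronecker deltas emerging from the specialization $\delta(q^m)\mapsto \delta_{m,0}$ line up with the arrow pattern of $Q(A_\ell)$. Once this calibration and the relation $\phi_a(A_{i,aq^{2n+i}})=a_i(n)$ are in place, nothing essentially difficult remains; the content is just the Leibniz identity together with the routine case analysis on $|i-j|\in\{0,1,\ge 2\}$.
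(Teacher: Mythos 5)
Your proposal is correct and follows essentially the same route as the paper: the paper's "proof" consists precisely of the two tables of structure constants you describe (the specialization of \eqref{eq:a-Poisson} to $A_\ell$ and the exchange matrix of $Q(A_\ell)$ read off from the arrow pattern), and the observation — recorded in the remark immediately after the proposition — that a log-canonical bracket is preserved under inverting the generators. Nothing is missing.
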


\begin{remark}
There is another Poisson isomorphism map $\beta': \C(\mathbf{X}) \to \C(a)$ given by $X^i_n \mapsto a_i(n)$, since the log-canonical Poisson bracket is invariant under an inverse transformation of variables;
$\{x, y\} = \ve x y \Leftrightarrow \{\frac{1}{x}, \frac{1}{y}\} = \ve \frac{1}{x} \frac{1}{y} $. 
\end{remark}

\begin{remark}
The map $\phi_a$ \eqref{qY-to-y} already appeared in \cite{I02} for a general $\mg$, from the view point of the lattice $\mathcal{W}$-algebra and the lattice Toda field theory. 
Except for the aspect of cluster algebras, what presented in this subsection and \S \ref{subsec:q-lattice-g} is based on \cite{IH00, I02}.
The variable $\beta^i_n$ there corresponds to $X^i_n$ here.
\end{remark}

\subsection{Weyl group action on $\C(y)$: $\mathfrak{g}=A_\ell$}

We set $q$ to be a root of unity, $\ve^{2m}=1$, with
a positive integer $m$ bigger than $\ell-1$. This setting corresponds to periodicity in $\mathbf{Y}_{a,\ve}$; $Y_{i,a \ve^{n+2m}} = Y_{i,a \ve^n}$. 
Equivalently, we assume periodicity: $y_i(n+m) = y_i(n)$ in $\C(y)$, and $a_i(n+m) = a_i(n)$ in $\C(a)$. 

For $i \in S$, we define rational maps $r_i$ on $\C(y)$ given by
\begin{align}
\label{Weyl-y-A}
r_i(y_j(n)) = 
\begin{cases}
\displaystyle{\frac{f_y(i,n-2)}{f_y(i,n-1)} \frac{y_{i-1}(n) y_{i+1}(n-1)}{y_i(n-1)}} & j = i,
\\[1mm]
y_j(n) & j \neq i,
\end{cases}
\end{align}
where $f_y(i,n) := 1 + \sum_{k=0}^{m-2} (a_i(n) a_i(n-1) \cdots a_i(n-k))^{-1}$.

\begin{thm}
Let $\C(\mathbf{X})$ be the rational functional field generated by the $X$-variables for the quiver $Q_{m}(A_\ell)$.  
The following diagram is commutative:
\begin{align}\label{Weyl-y-X}
\xymatrix{
\C(\mathbf{X}) \ar[r]_{\beta} \ar[d]^{R_i^\ast} & 
\C(a) \ar@{}[r]|{\subset} \ar[d]^{r_i|_{\C(a)}} & \C(y) \ar[d]^{r_i}
\\
\C(\mathbf{X}) \ar[r]_{\beta} &
\C(a) \ar@{}[r]|{\subset} & \C(y)
}
\end{align}
for $i \in S$.
The rational maps $r_i$ \eqref{Weyl-y-A} for $i \in S$ generate the action of $W(A_\ell)$ on $\C(y)$.
\end{thm}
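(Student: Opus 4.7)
The plan is to first verify the commutative diagram \eqref{Weyl-y-X} by direct computation on the generators $X^j_n$ of $\C(\mathbf{X})$, and then to deduce the Weyl relations for the $r_i$ on $\C(y)$ via $\beta$ together with a computation on the auxiliary ``gauge'' variables $t_j := \prod_{n \in \Z/m\Z} y_j(n)$.

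For the commutative diagram, since $\beta(X^i_n) = a_i(n)^{-1}$ extends to $\beta(f_X(i,n)) = f_y(i,n)$, verifying the equality $\beta(R_i^\ast(X^j_n)) = r_i(a_j(n)^{-1})$ for each generator reduces to a short algebraic identity. I would split into three cases by the position of $v^j_n$ relative to the path $P_i$ in $Q_m(A_\ell)$: (i) $j=i$ on the path, using the first line of \eqref{eq:RonX}; (ii) $j = i \pm 1$ adjacent to the path via the arrows $v^{i-1}_n \leftarrow v^i_n$ and $v^{i+1}_n \to v^i_n$, using the second and third lines; and (iii) $|j - i| \geq 2$, where both sides are trivially $a_j(n)^{-1}$. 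Each case is a direct manipulation using $a_i(n) = y_i(n)y_i(n+1)/(y_{i-1}(n+1)y_{i+1}(n))$; for example, case (i) reduces to checking $r_i(a_i(n)) = f_y(i,n-2)/(f_y(i,n) \, a_i(n-1))$ from the definition \eqref{Weyl-y-A}.

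Once the diagram is established, $\beta$ transports the Weyl relations from $\C(\mathbf{X})$ (Theorem \ref{thm:R-Weyl}(1)) to $\C(a) \subset \C(y)$. To extend them to $\C(y)$, note that $r_i(t_j) = t_j$ for $j \neq i$ trivially, while a telescoping calculation from \eqref{Weyl-y-A} gives $r_i(t_i) = t_i A_i^{-1}$, with $A_i := \prod_n a_i(n)$. Thus the pairs $(t_j, A_i)$ transform as the fundamental weights and simple roots of $A_\ell$ under the generators $r_i$, and the Weyl relations on the $t_j$ are automatic from the standard action of $W(A_\ell)$ on the weight lattice. The main obstacle is showing that $\C(a)$ together with the $t_j$ generate $\C(y)$; this I would establish by solving the recursion $y_i(n+1) = a_i(n)\, y_{i-1}(n+1) y_{i+1}(n)/y_i(n)$ inductively in $i$ (starting from $y_0 \equiv 1$) under the periodicity $y_i(n+m) = y_i(n)$, expressing each $y_i(n)$ as a rational function of the $a_i(n)$'s and the $t_j$. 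With this reduction, the Weyl relations on $\C(a)$ (from the diagram) and on the $t_j$ (as fundamental weights) combine to yield the relations $(r_i r_j)^{m_{ij}} = \mathrm{id}$ on all of $\C(y)$.
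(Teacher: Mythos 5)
Your treatment of the commutative diagram is correct and is essentially the paper's own argument: compute $r_i(a_j(n))$ from \eqref{Weyl-y-A} case by case ($j=i$, $j=i\pm1$, $|j-i|\geq 2$) and match against \eqref{eq:RonX} using $\beta(f_X(i,n))=f_y(i,n)$. The problem is in the second half. Your route rests on the claim that $\C(a)$ together with $t_j=\prod_{n}y_j(n)$ generates $\C(y)$, and that claim is false for many admissible $(\ell,m)$. Summing the exponents of $a_i(n)=y_i(n)y_i(n+1)/(y_{i-1}(n+1)y_{i+1}(n))$ over $n\in\Z/m\Z$ gives $A_i:=\prod_n a_i(n)=\prod_j t_j^{C_{ij}}$, and since $(\ell+1)\mathbf{C}^{-1}$ is an integer matrix, $t_j^{\ell+1}$ is a Laurent monomial in the $A_i$. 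Hence every $t_j$ is \emph{algebraic} over $\C(a)$, so adjoining the $t_j$ cannot raise the transcendence degree. On the other hand, the exponent matrix of $y\mapsto a$ has determinant $\prod_{\zeta^m=1}(1+\zeta+\cdots+\zeta^{\ell})$, which vanishes whenever $\gcd(m,\ell+1)>1$; in all such cases $\C(a)(t_1,\ldots,t_\ell)\subsetneq\C(y)$. The simplest instance is $A_1$ with $m=2$, where $a_1(1)=a_1(2)=y_1(1)y_1(2)=t_1$ and $\C(a)(t_1)$ has transcendence degree $1<2$. Your telescoping identities $r_i(t_j)=t_jA_i^{-\delta_{ij}}$ and $r_i(A_j)=A_jA_i^{-C_{ij}}$ are correct, but they cannot propagate the relations to all of $\C(y)$; likewise the recursion $y_{i+1}(n)=y_i(n)y_i(n+1)/(a_i(n)y_{i-1}(n+1))$ only reduces everything to the $m$ unknowns $y_1(1),\ldots,y_1(m)$, which the boundary condition at $i=\ell+1$ does not determine rationally in terms of the $a$'s and $t_j$'s.

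The paper closes this gap by a direct computation on the generators $y_j(n)$ themselves: it applies $r_ir_{i+1}r_i$ and $r_{i+1}r_ir_{i+1}$ to $y_i(n)$, observes that both results factor as the common monomial $y_{i-1}(n)y_{i+2}(n-2)/y_{i+1}(n-2)$ times elements of $\C(a)$ (nested $r$-images of ratios of $f_y$'s), and then identifies the equality of those elements with the image under $\beta$ of the braid relation on $\C(\mathbf{X})$ guaranteed by Theorem \ref{thm:R-Weyl}, rewritten as the identities \eqref{A-Weyl1}--\eqref{A-Weyl2}. Some such explicit factorization step (or another device for passing from $\C(a)$ to $\C(y)$) is what your argument is missing.
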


\begin{proof}
To see the commutativity, we calculate $r_i(a_j(n))$:
\begin{align*}
r_i(a_j(n)) 
&= r_i \left( \frac{y_j(n) y_{j}(n+1)}{y_{j-1}(n+1) y_{j+1}(n)} \right)
\\
&= \begin{cases}
    \frac{\frac{f_y(i,n-2)y_{i-1}(n) y_{i+1}(n-1)}{f_y(i,n-1) y_i(n-1)}
          \frac{f_y(i,n-1)y_{i-1}(n+1) y_{i+1}(n)}{f_y(i,n) y_i(n)}}
         {y_{i-1}(n+1) y_{i+1}(n)}
    = \frac{f_y(i,n-2)}{f_y(i,n) a_i(n-1)} & j=i,
    \\[2mm]
    \frac{y_{i+1}(n) y_{i+1}(n+1)}{\frac{f_y(i,n-1)y_{i-1}(n+1) y_{i+1}(n)}{f_y(i,n) y_i(n)} y_{i+2}(n)}
    = \frac{a_{i+1}(n) a_i(n) f_y(i,n)}{f_y(i,n-1)} & j=i+1,
    \\[4mm]
    \frac{y_{i-1}(n) y_{i-1}(n+1)}{ y_{i-2}(n+1) \frac{f_y(i,n-2)y_{i-1}(n) y_{i+1}(n-1)}{f_y(i,n-1) y_i(n-1)}}
    = \frac{a_{i-1}(n) a_i(n-1) f_y(i,n-1)}{f_y(i,n-2)} & j=i-1,
    \\
    a_j(n) & \text{otherwise}.
\end{cases}
\end{align*}
Thus it follows that $r_i(\beta(X^j_n)) = \beta(R_i^\ast(X^j_n))$, using 
$\beta(f_X(i,n)) = f_y(i,n)$. 

Next we check that $r_i r_{i+1} r_i (y_j(n)) = r_{i+1} r_i r_{i+1} (y_j(n))$ holds. Due to Theorem \ref{thm:R-Weyl} it holds that non-trivial identities
$$
  R_i^\ast R_{i+1}^\ast R_{i}^\ast (X_n^{j}) 
  = 
  R_{i+1}^\ast R_{i}^\ast R_{i+1}^\ast (X_n^{j})  
$$
for $j=i, i \pm 1, i+2$, which are equivalent to
\begin{align}
  \label{A-Weyl1}
  &R_{i}^\ast \left(\frac{f_X(i+1,n-3)}{f_X(i+1,n-2)} R_{i+1}^\ast \left(\frac{f_X(i,n-2)}{f_X(i,n-1)}\right) \right)
  =
  \frac{f_X(i+1,n-3)}{f_X(i+1,n-2)} R_{i+1}^\ast \left(\frac{f_X(i,n-2)}{f_X(i,n-1)}\right),
  \\
  \label{A-Weyl2}
  &R_{i+1}^\ast \left(\frac{f_X(i,n-1)}{f_X(i,n)} R_{i}^\ast \left(\frac{f_X(i+1,n-1)}{f_X(i+1,n)}\right) \right)
  =
  \frac{f_X(i,n-1)}{f_X(i,n)} R_{i}^\ast \left(\frac{f_X(i+1,n-1)}{f_X(i+1,n)}\right).
\end{align}
On the other hand, we have 
\begin{align*}
  &r_i r_{i+1} r_i (y_i(n)) 
  = 
  \frac{y_{i-1}(n) y_{i+2}(n-2)}{y_{i+1}(n-2)}
  r_i \left(\frac{f_y(i+1,n-3)}{f_y(i+1,n-2)} r_{i+1} \left(\frac{f_y(i,n-2)}{f_y(i,n-1)}\right) \right),
  \\
  &r_{i+1} r_{i} r_{i+1} (y_i(n)) 
  = 
  \frac{y_{i-1}(n) y_{i+2}(n-2)}{y_{i+1}(n-2)}
  \frac{f_y(i+1,n-3)}{f_y(i+1,n-2)} r_{i+1} \left(\frac{f_y(i,n-2)}{f_y(i,n-1)}\right).
\end{align*}
Then $r_i r_{i+1} r_i (y_i(n)) = r_{i+1} r_{i} r_{i+1} (y_i(n))$ follows from 
\eqref{A-Weyl1} via $\beta$.
In the same manner, $r_i r_{i+1} r_i (y_{i+1}(n)) = r_{i+1} r_{i} r_{i+1} (y_{i+1}(n))$ follows from \eqref{A-Weyl2}. 
For the other $j$ it holds that $r_i r_{i+1} r_i (y_j(n)) = r_{i+1} r_{i} r_{i+1} (y_j(n)) = y_j(n)$, and the claim follows.
\end{proof}

\subsection{Correspondence of the $q$-deformation and the lattice: the case of general $\mg$}\label{subsec:q-lattice-g}

For a generic $q$ and $a \in \C^\times / q^{d\Z}$, define a rational functional field $\mathbf{Y}_{a,q}$:
\begin{align}\label{eq:Y-def-g}
\mathbf{Y}_{a,q} := 
\begin{cases}
\C(Y_{i,a q^{2n+i-1}}; i \in S, n \in \Z) & \mg = A_\ell, C_\ell, G_2,
\\
\C(Y_{i,a q^{2n+i-1}}; i = 1,\ldots \ell_0, Y_{i,a q^{2n+i-2}};i=\ell_0+1,\ldots,\ell; n \in \Z) & \mg = D_\ell, E_{6,7,8},
\\
\C(Y_{i,a q^{2n+i-1}}; i \in S \setminus \{\ell\}, Y_{\ell,a q^{2n+\ell-\frac{3}{2}}}; n \in \frac{\Z}{2}) & \mg = B_\ell,
\\
\C(Y_{i,a q^{2n+i-1}}; i = 1,2,Y_{3,a q^{2n+\frac{3}{2}}},Y_{4,a q^{2n+2}}; n \in \frac{\Z}{2}) & \mg = F_4,
\end{cases}
\end{align}
where $\ell_0=\ell-2$ for $D_\ell$ and , $\ell_0=3$ for $E_{6,7,8}$.
The following lemma is proved case by case (cf. \cite[\S 2.3.2]{HL16}).

\begin{lem}
If $Y_{i,aq^{k}} \in \mathbf{Y}_{a,q}$, then it holds that $A_{i,aq^{k+d_i}} \in \mathbf{Y}_{a,q}$. 
\end{lem}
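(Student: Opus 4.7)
The plan is to verify the lemma by directly expanding $A_{i,aq^{k+d_i}}$ via formula \eqref{q-A} and checking that every $Y$-factor appearing there lies in $\mathbf{Y}_{a,q}$. Substituting $a\to aq^{k+d_i}$ in \eqref{q-A} gives
\begin{align*}
A_{i,aq^{k+d_i}}
&= Y_{i,aq^{k+2d_i}}\, Y_{i,aq^{k}}
   \prod_{j:\,C_{ji}=-1} Y_{j,aq^{k+d_i}}^{-1}
\\
&\quad \times \prod_{j:\,C_{ji}=-2} Y_{j,aq^{k+d_i+d_j}}^{-1} Y_{j,aq^{k+d_i-d_j}}^{-1}
  \prod_{j:\,C_{ji}=-3} Y_{j,aq^{k+d_i+2d_j}}^{-1} Y_{j,aq^{k+d_i}}^{-1} Y_{j,aq^{k+d_i-2d_j}}^{-1},
\end{align*}
and by \eqref{eq:Y-def-g} membership of each factor in $\mathbf{Y}_{a,q}$ amounts to a congruence condition on its exponent modulo $2$ (with $n\in\tfrac{1}{2}\Z$ giving a half-integer version of the condition at the short-root indices in $B_\ell$ and $F_4$, and modulo $2d=2$ throughout for $G_2$).

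First, for the simply-laced types $A_\ell, D_\ell, E_{6,7,8}$ (all $d_i=1$), I would note that the residues $e_i$ appearing in \eqref{eq:Y-def-g} define a proper $2$-coloring of the Dynkin diagram, so that $e_i\not\equiv e_j\pmod 2$ whenever $i$ and $j$ are adjacent. Then $Y_{i,aq^{k\pm 1}}$ both have the parity of $e_i$ and lie in $\mathbf{Y}_{a,q}$, while each neighbour factor $Y_{j,aq^{k+1}}^{-1}$ has the parity of $e_i+1\equiv e_j$ and hence also lies in $\mathbf{Y}_{a,q}$. This gives the lemma for these types by inspection, extending the explicit identity already displayed just after \eqref{eq:Y-def-A} for $A_\ell$.

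For the non-simply-laced types I would split into cases according to the edges at $i$. Edges with $C_{ji}=-1$ are handled just as in the simply-laced case. For edges with $C_{ji}=-2$ (in $B_\ell, C_\ell, F_4$) the two shifts $k+d_i\pm d_j$ differ by $2d_j$, which is precisely the period of the admissible sublattice for index $j$ in \eqref{eq:Y-def-g}, so both $Y_{j,\cdot}^{-1}$ factors land in $\mathbf{Y}_{a,q}$ once one of them does. Similarly, for the unique edge with $C_{21}=-3$ in $G_2$, the three shifts $k+d_2$ and $k+d_2\pm 2d_1$ at the $Y_{1,\cdot}$ position differ by multiples of $2d_1=2$, matching the period of the $Y_{1,\cdot}$ sublattice. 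In each case the remaining task is to verify the base residue against the corresponding entry of \eqref{eq:Y-def-g}.

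The main obstacle is the bookkeeping at the short-root endpoints in $B_\ell$ and $F_4$, where $d_i=\tfrac{1}{2}$ forces the relevant admissible lattice to be indexed by $n\in\tfrac{1}{2}\Z$, and where the half-step shift $\pm d_i$ entering the exponents of the neighbour factors must be matched carefully against this finer lattice (and symmetrically at the long-root node adjacent to the short chain). Once this half-integer matching and the $G_2$ triple-factor check are carried out explicitly, the lemma follows in all cases.
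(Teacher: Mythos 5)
Your approach---expanding $A_{i,aq^{k+d_i}}$ via \eqref{q-A} and checking each factor against the admissible exponent cosets of \eqref{eq:Y-def-g}---is exactly the case-by-case verification the paper performs (its entire proof is one sentence deferring to \cite[\S 2.3.2]{HL16}), and your structural reductions are sound: at a $C_{ji}=-2$ (resp.\ $-3$) edge the two (resp.\ three) shifts differ by multiples of $2d_j$, which is indeed the period of the $j$-th admissible coset, so only one base residue per edge needs checking, and in $B_\ell$, $F_4$ the long-root cosets fill all of $\Z$ while the short-root cosets fill $\Z+\tfrac{1}{2}$, which makes the half-integer bookkeeping you worry about essentially trivial. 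One genuine caution: your assertion that the residues \emph{as displayed} in \eqref{eq:Y-def-g} give a proper $2$-coloring fails at the trivalent vertex of $D_\ell$ and $E_{6,7,8}$ if read verbatim --- for $D_4$ the formula assigns vertices $2$ and $3$ the same parity ($\xi_2=\xi_3=1$), and for $E_\ell$ it assigns vertices $3$ and $4$ the same parity ($\xi_3=\xi_4=2$) --- and with that literal reading the lemma itself would be false (e.g.\ $A_{2,aq^{k+1}}$ in $D_4$ contains $Y_{3,aq^{k+1}}^{-1}$ with the wrong parity). The displayed exponents must be read as a genuine height function on the Dynkin diagram in the Hernandez--Leclerc convention, with the fork vertices carrying the parity opposite to their common neighbour; you should verify the coloring rather than assert it, and note the adjustment. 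Finally, for $G_2$ the triple factor arises from $C_{12}=-3$ (so $Y_{1,\cdot}$ factors inside $A_{2,\cdot}$), not $C_{21}=-3$ as you wrote, though your subsequent shifts $k+d_2\pm 2d_1$ show you have the right configuration in mind.
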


Define $\mathbf{A}_{a,q} = \C(A_{i,a q^{k+d_i}}; Y_{i,a q^{k}} \in \mathbf{Y}_{a,q}, i \in S, k \in d \Z)$,   
as a subfield of $\mathbf{Y}_{a,q}$. 
We introduce commuting variables $y_i(n)$ on a lattice $(i,n) \in S \times d \Z$, and define rational functional field
$\C(y) := \C(y_i(n); i \in S, n \in d \Z)$. 
Further we define $a_i(n) \in \C(y)$ by
\begin{align}\label{a-y}
  a_i(n) = \frac{y_i(n) y_{i}(n+d_i)}{F(i,n)} 
\end{align}
for $i \in S$ and  $n \in d \Z$, where 
\begin{align}\label{eq:a-y-g}
F(i,n)
=
\begin{cases}
\displaystyle{\prod_{j: v^j_n \leftarrow v^i_n} y_j(n+d_j) \prod_{j: v^j_n \rightarrow v^i_n} y_j(n)} & \text{ almost cases, }
\\
y_{i-1}(n+\frac{1}{2}) y_{i+1}(n) & i=\ell; B_\ell, \,i= 3; F_4,
\\
y_{i-1}(n+1) y_{i+1}(n) y_{i+1}(n+\frac{1}{2}) & i=\ell-1; B_\ell, \,i=2; F_4, 
\\
y_{\ell-1}(n+1) y_{\ell-1}(n+2) & i= \ell; C_\ell,
\\
y_1(n+1)y_1(n+2)y_1(n+3) & i=2; G_2.      
\end{cases}
\end{align}
Let $\phi_a : \mathbf{Y}_{a,q} \to \C(y)$ be an isomorphism of the rational functional fields given by \eqref{qY-to-y} for almost cases, except for 
\begin{align*}
  &Y_{i,a q^{2n+i-2}} \mapsto y_i(n) \quad i=\ell_0+1 \cdots, \ell ~\text{ for }  D_\ell, E_{6,7,8},
\\
  &Y_{\ell,2n+\ell-\frac{3}{2}} \mapsto y_\ell(n) ~~\text{ for } B_\ell,
\\
  &Y_{3,a q^{2n+\frac{3}{2}}} \mapsto y_3(n), ~Y_{4,a q^{2n+2}} \mapsto y_4(n) 
  ~~\text{ for } F_4. 
\end{align*}
By direct calculations, one obtain the following:

\begin{lem}\label{lem:AY-ay}
If $Y_{i,aq^k} \in \mathbf{Y}_{a,q}$ and $\phi_a(Y_{i,aq^k}) = y_i(n)$,
then $\phi_a(A_{i,aq^{k+d_i}}) = a_i(n)$. 
\end{lem}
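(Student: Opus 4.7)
The plan is a direct, case-by-case verification, comparing \eqref{q-A} applied to $A_{i,aq^{k+d_i}}$ with the definition \eqref{a-y}--\eqref{eq:a-y-g} of $a_i(n)$ after pushing forward under $\phi_a$.

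I would first isolate the two ``diagonal'' factors. Substituting $a \mapsto aq^{k+d_i}$ in \eqref{q-A} and using $q_i = q^{d_i}$ gives
$$
  A_{i,aq^{k+d_i}} = Y_{i,aq^{k+2d_i}}\,Y_{i,aq^k}\cdot\prod_{j\ne i}(\text{shifted } Y_{j,*}^{-1}\text{ factors}).
$$
For every Dynkin type the rule defining $\phi_a$ has the form $Y_{i,aq^{2n+c_i}} \mapsto y_i(n)$ for a type-and-index-dependent constant $c_i$; shifting the exponent of $q$ by $2d_i$ shifts the lattice index $n$ by $d_i$, so the hypothesis $\phi_a(Y_{i,aq^k}) = y_i(n)$ yields $\phi_a(Y_{i,aq^{k+2d_i}}) = y_i(n+d_i)$. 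These two diagonal factors reproduce the numerator $y_i(n) y_i(n+d_i)$ of $a_i(n)$, reducing the problem to matching the product over the neighbours $j$ of $i$ with $F(i,n)^{-1}$.

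I would then dispose of the simply-laced types in one stroke. For $\mg = A_\ell, D_\ell, E_{6,7,8}$ all $d_j = 1$ and only the $C_{ji} = -1$ sum contributes: each neighbour $j$ of $i$ produces a single factor $Y_{j,aq^{k+1}}^{-1}$. A vertex-by-vertex check shows that $\phi_a(Y_{j,aq^{k+1}}) = y_j(n)$ when the Dynkin-quiver arrow runs $j \to i$ and $\phi_a(Y_{j,aq^{k+1}}) = y_j(n+1)$ when it runs $i \to j$, which aligns exactly with the two products in the first line of \eqref{eq:a-y-g}. The $A_\ell$ case was already verified in \S\ref{subsec:q-lattice-A}, and the arguments for $D_\ell$ and $E_{6,7,8}$ are analogous once one accounts for the branch-node exponent conventions fixed by \eqref{eq:Y-def-g}.

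Finally, I would handle the non-simply-laced types $B_\ell, C_\ell, F_4, G_2$ vertex by vertex. The $C_{ji} = -2$ entry contributes two shifted inverses $Y_{j,aq^{k+d_i+d_j}}^{-1} Y_{j,aq^{k+d_i-d_j}}^{-1}$, and these are matched against the two-term expressions for $F(i,n)$ at $i = \ell-1, \ell$ of $B_\ell$, $i = \ell$ of $C_\ell$, and $i = 2, 3$ of $F_4$. The unique $C_{12} = -3$ entry of $G_2$ (at $i = 2$, $j = 1$) contributes three $Y_{1,*}^{-1}$ factors whose exponents map under $\phi_a$ to $y_1(n+1), y_1(n+2), y_1(n+3)$, matching $F(2,n) = y_1(n+1) y_1(n+2) y_1(n+3)$. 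The main obstacle is purely one of bookkeeping: in $B_\ell$ and $F_4$ the relation $d = \tfrac{1}{2}$ forces certain $Y$'s onto a half-shifted $q$-sublattice, and in $G_2$ the triple $d_2 = 3$, $d_1 = 1$ must be threaded through the $C_{ji} = -3$ shifts. Once these cases are checked, the lemma holds in all Dynkin types.
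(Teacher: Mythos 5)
Your proposal is correct and is exactly the argument the paper has in mind: the paper states this lemma with only the remark ``by direct calculations,'' and your verification --- matching the diagonal factors $Y_{i,aq^{k+2d_i}}Y_{i,aq^k}$ to $y_i(n+d_i)y_i(n)$ and then checking, type by type, that the off-diagonal $C_{ji}=-1,-2,-3$ factors of \eqref{q-A} map under $\phi_a$ to $F(i,n)^{-1}$ of \eqref{eq:a-y-g} --- is that direct calculation carried out. (Only a minor bookkeeping remark: at $i=\ell$ for $B_\ell$ the relevant entry is $C_{\ell-1,\ell}=-1$ with a half-integer shift, not a $-2$ entry, but this does not affect the argument.)
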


We transform the Poisson structure \eqref{Poisson-A} on $\mathbf{A}_{a,q}$ to that \eqref{eq:a-Poisson} on $\C(a)$ in the same manner as in the case of $A_\ell$, and obtain the following.

\begin{prop}[\cite{IH00,I02}]
The Poisson structure on $\C(a)$ is obtained as
\begin{align*}
&\{a_i(n), a_i(n') \} = (\delta_{n', n+d_i} - \delta_{n',n-d_i}) a_i(n) a_i(n'),
\\  
&\{a_i(n), a_{j}(n') \} = (\delta_{n', n+B_{ij}} - \delta_{n',n}) a_i(n) a_{j}(n'); ~i < j, ~d_i \leq d_j,
\\
&\{a_i(n), a_{j}(n') \} = (\delta_{n', n+\frac{B_{ij}}{2}} - \delta_{n',n-\frac{B_{ij}}{2}}) a_i(n) a_{j}(n'); ~i < j, ~d_i > d_j,
\end{align*} 
and the others are Poisson commutative.
\end{prop}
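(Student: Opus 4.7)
The plan is to derive the claimed brackets on $\C(a)$ by pushing forward the bracket \eqref{Poisson-A} on $\mathbf{A}_{a,q}$ through the isomorphism $\phi_a$, applying the specialization $\delta(q^m) \mapsto \delta_{m,0}$ exactly as was done for $A_\ell$ in \S \ref{subsec:q-lattice-A}. Combining Lemma \ref{lem:AY-ay} with the case-by-case definition of $\mathbf{Y}_{a,q}$ in \eqref{eq:Y-def-g}, I would first record, for each Dynkin type and each $i \in S$, a spectral shift $\sigma^A_i$ so that $a_i(n) = \phi_a\bigl(A_{i,\, a q^{\sigma^A_i + 2n}}\bigr)$ for $n$ in the relevant lattice. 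Using $B_{ij} = d_i C_{ij} = (\alpha_i,\alpha_j)$ (which is symmetric, so $B_{ii} = 2 d_i$) and the definition $B_{ij}(z) = \delta(q^{B_{ij}} z) - \delta(q^{-B_{ij}} z)$, the bracket between two $A$-variables then pushes forward to
\begin{equation*}
\{a_i(n), a_j(n')\} = \Bigl(\delta_{n'-n,\, (\sigma^A_i - \sigma^A_j + B_{ij})/2} \;-\; \delta_{n'-n,\, (\sigma^A_i - \sigma^A_j - B_{ij})/2}\Bigr)\, a_i(n)\, a_j(n').
\end{equation*}

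With this master identity, the three cases of the proposition reduce to determining the relative shift $\sigma^A_i - \sigma^A_j$. For $i = j$ the shift vanishes and $B_{ii} = 2 d_i$, which directly yields the first line. For $i < j$ along a simply-laced edge, or more generally whenever $d_i \leq d_j$, the definition \eqref{eq:Y-def-g} is arranged so that $\sigma^A_i - \sigma^A_j = B_{ij}$, giving $\delta_{n', n + B_{ij}} - \delta_{n', n}$. For $i < j$ at a long-before-short junction, namely $(\ell-1,\ell)$ in $B_\ell$, $(3,4)$ in $F_4$, and $(1,2)$ in $G_2$, where $d_i > d_j$, one finds instead $\sigma^A_i - \sigma^A_j = 0$, producing $\delta_{n', n + B_{ij}/2} - \delta_{n', n - B_{ij}/2}$. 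Non-adjacent $(i,j)$ satisfy $B_{ij} = 0$, so $B_{ij}(z) \equiv 0$ and the bracket vanishes automatically.

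The main obstacle is the bookkeeping of the shifts $\sigma^A_i$: in $D_\ell$ and $E_{6,7,8}$ they jump across the trivalent node, in $B_\ell$ and $F_4$ the short-root column carries half-integer exponents, and in $C_\ell$ and $G_2$ the long-root column has a larger step. Once the relative shifts $\sigma^A_i - \sigma^A_j$ are verified against each Dynkin diagram, the rest is a direct specialization of \eqref{Poisson-A}. In practice I would hand-check $B_2$, $F_4$, and $G_2$ to pin down the two exceptional patterns in the non-simply-laced cases, and then invoke the uniformity of \eqref{eq:Y-def-g} to cover the remaining adjacencies in the other types.
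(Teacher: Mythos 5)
Your overall strategy coincides with the paper's: the paper proves this proposition by pushing the bracket \eqref{Poisson-A} forward through $\phi_a$ and replacing $\delta(q^m)$ by $\delta_{m,0}$, ``in the same manner as in the case of $A_\ell$'', and your master identity
$\{a_i(n),a_j(n')\}=\bigl(\delta_{n'-n,(\sigma^A_i-\sigma^A_j+B_{ij})/2}-\delta_{n'-n,(\sigma^A_i-\sigma^A_j-B_{ij})/2}\bigr)a_i(n)a_j(n')$
is the correct general form of that computation; the $i=j$ line ($B_{ii}=2d_i$, zero shift) and the vanishing for non-adjacent pairs follow exactly as you say.

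The problem is that your case classification --- which is the entire nontrivial content of the proposition --- is wrong in two places. In $G_2$ the paper's normalization is $\mathbf{D}=(1,3)$, so for $(i,j)=(1,2)$ one has $d_1=1<d_2=3$: this pair belongs to the \emph{second} case, and from \eqref{eq:Y-def-g} one gets $\sigma^A_1=1$, $\sigma^A_2=1+d_2=4$, hence $\sigma^A_1-\sigma^A_2=-3=B_{12}$, not $0$. Your claimed shift $0$ would give $\delta_{n',n+B_{12}/2}-\delta_{n',n-B_{12}/2}=\delta_{n',n-3/2}-\delta_{n',n+3/2}$, which vanishes identically on the lattice $d\Z=\Z$ --- the wrong bracket. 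In $F_4$ we have $\mathbf{D}=(1,1,\tfrac12,\tfrac12)$, so the junction with $i<j$ and $d_i>d_j$ is $(2,3)$, not $(3,4)$; the pair $(3,4)$ is a short--short edge with $d_3=d_4$ sitting in the second case (indeed $\sigma^A_3=2$, $\sigma^A_4=\tfrac52$, so $\sigma^A_3-\sigma^A_4=-\tfrac12=B_{34}$), while $(2,3)$ has $\sigma^A_2-\sigma^A_3=0$ and $B_{23}=-1$, giving the third-case bracket $\delta_{n',n-1/2}-\delta_{n',n+1/2}$. The only pairs in the third case are $(\ell-1,\ell)$ for $B_\ell$ and $(2,3)$ for $F_4$, consistent with the exceptional lines of \eqref{eq:a-y-g}. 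You defer the hand-checks of $B_2$, $F_4$, $G_2$ to ``practice'', but those checks are not optional bookkeeping here --- they are the proof --- and as written your asserted shifts yield incorrect brackets for $G_2$ and for both $F_4$ adjacencies at the short end of the diagram.
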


Finally, in the same manner as Proposition \ref{prop:Poisson-aX-A}, we prove the following.

\begin{prop}\label{prop:PoissonaX-g}
Let $\C(\mathbf{X})$ be the rational functional field generated by the $X$-variables for the quiver $Q(\mg)$, equipped with the Poisson structure \eqref{Poisson-XA}. An isomorphism of rational functional fields $\beta : \C(\mathbf{X}) \to \C(a)$ given by $X^i_n \mapsto a_i(n)^{-1}$ is a Poisson map. 
\end{prop}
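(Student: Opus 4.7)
The plan is to mirror the argument of Proposition \ref{prop:Poisson-aX-A} for $A_\ell$ by a direct comparison of the exchange matrix of $Q(\mg)$ with the Poisson bracket on $\C(a)$ given in the preceding proposition. First I would record the elementary fact that a log-canonical bracket is inversion-invariant: if $\{x,y\} = c\, xy$ then $\{x^{-1}, y^{-1}\} = c\, x^{-1} y^{-1}$. Combined with \eqref{Poisson-XA}, this reduces the claim that $\beta: X^i_n \mapsto a_i(n)^{-1}$ is Poisson to the equality
$$\{a_i(n), a_j(n')\} = \ve_{v^i_n,\, v^j_{n'}}\, a_i(n)\, a_j(n')$$
for all $i, j \in S$ and all admissible $n, n'$, where $\ve$ is the exchange matrix of $Q(\mg)$.

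Next I would read off $\ve_{v^i_n, v^j_{n'}}$ directly from the arrow data listed in \S 3.1. The diagonal arrow $v^i_n \to v^i_{n+d_i}$ immediately yields $\ve_{v^i_n, v^i_{n\pm d_i}} = \pm 1$, matching the $j=i$ part $\delta_{n', n+d_i} - \delta_{n', n-d_i}$ of the bracket. In the simply-laced types $A_\ell, D_\ell, E_{6,7,8}$, for adjacent $i, j$ in the Dynkin diagram the two arrows $v^i_n \to v^j_n$ and $v^j_{n+1} \to v^i_n$ (or their reverses, according to the orientation of $C(\mg)$) give exactly $\ve_{v^i_n, v^j_{n'}} = \delta_{n', n} - \delta_{n', n-1}$, which agrees with the bracket coefficient $\delta_{n', n+B_{ij}} - \delta_{n', n}$ since $B_{ij} = -1$. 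For non-adjacent $i, j$ there are no arrows and both sides vanish.

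For the non-simply-laced types I would carry out the matching type by type around the short and long edges of the Dynkin diagram, using the explicit descriptions of $Q(B_\ell), Q(C_\ell), Q(F_4), Q(G_2)$ in \S 3.1. In each case the $\tfrac{1}{2}$- or $2$-shifts in the horizontal direction of the quiver reproduce exactly the $B_{ij}$- or $B_{ij}/2$-shifts appearing inside the $\delta$-functions of the $\C(a)$-bracket. The main technical obstacle is the bookkeeping at the multi-edge of the Dynkin diagram, in particular the two circles $P_{i,1}, P_{i,2}$ in $Q(B_\ell), Q(F_4), Q(G_2)$ and the pair $P_{\ell,1}, P_{\ell,2}$ in $Q(C_\ell)$, together with the half-integer vertex indices at the short-root end. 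However, the auxiliary function $F(i,n)$ in \eqref{a-y}--\eqref{eq:a-y-g} was tailored precisely so that the monomial $a_i(n)$ records the incoming and outgoing arrows at $v^i_n$ with the correct shifts; hence the matching reduces in every exceptional case to a finite and mechanical check, completing the verification.
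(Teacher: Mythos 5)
Your proposal is correct and is essentially the paper's own argument: the paper disposes of this proposition with the single phrase ``in the same manner as Proposition \ref{prop:Poisson-aX-A}'', i.e.\ precisely by matching the exchange matrix of $Q(\mg)$, read off from the arrow data of \S 3.1, against the bracket on $\C(a)$ stated in the preceding proposition, and invoking the inversion-invariance of log-canonical brackets. Two cosmetic points: with the paper's orientation of $C(\mg)$ (arrows from $i+1$ to $i$) the adjacent-vertex arrows give $\ve_{v^i_n,v^j_{n'}}=\delta_{n',n-1}-\delta_{n',n}$ for $i<j$, which is what matches $\delta_{n',n+B_{ij}}-\delta_{n',n}$ with $B_{ij}=-1$ (your displayed $\delta_{n',n}-\delta_{n',n-1}$ is its negative, though you hedge on orientation); and the circles $P_{i,\gamma_i}$ you cite as a bookkeeping obstacle live only in the periodic quiver $Q_m(\mg)$ and play no role in this statement about the infinite quiver $Q(\mg)$.
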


\subsection{Weyl group action on $\C(y)$: the case of general $\mathfrak{g}$}

We consider the case that $q$ is a root of unity, $\ve^{2d'm}=1$, 
where $m$ is a positive integer bigger than $\ell-1$. This corresponds to periodicity in $\mathbf{Y}_{a,\ve}$; $Y_{i,a \ve^{n+2d'm}} = Y_{i,a\ve^n}$. Equivalently, we assume periodicity: $y_i(n+d'm) = y_i(n)$ in $\C(y)$, and also in $\C(a)$.

For $i \in S$, we define rational maps $r_i$ on $\C(y)$ by
\begin{align}
\label{Weyl-y}
r_i(y_j(n)) = 
\begin{cases}
\displaystyle{\frac{f_y(i,n-2d_i)}{f_y(i,n-d_i)} \frac{F(i,n-d_i)}{y_i(n-d_i)}} & j = i,
\\[1mm]
y_j(n) & j \neq i,
\end{cases}
\end{align}
where $F(i,n)$ is defined at \eqref{eq:a-y-g}. We set 
$$
  f_y(i,n) := 1 + \sum_{k=0}^{\frac{d'm}{d_i}-2} (a_i(n) a_i(n-d_i) \cdots a_i(n-d_i k))^{-1}.
$$

\begin{thm}\label{thm:WonY-g}
Let $\C(\mathbf{X})$ be the rational functional field generated by the $X$-variables for the quiver $Q_{m}(\mg)$.  
Then the diagram \eqref{Weyl-y-X} is commutative for $i \in S$.
The rational maps $r_i$ \eqref{Weyl-y} generate the action of $W(\mg)$ on $\C(y)$.
\end{thm}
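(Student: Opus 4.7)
The plan is to follow the strategy already used for $\mg = A_\ell$: first verify that the diagram \eqref{Weyl-y-X} commutes for every $i \in S$, then bootstrap the Weyl group relations from $\C(\mathbf{X})$ (already known by Theorem \ref{thm:R-Weyl}) to all of $\C(y)$. Throughout, the key reduction is that $r_i$ fixes every $y_k(n)$ with $k \neq i$, so everything of interest happens either on the image $\beta(\C(\mathbf{X})) = \C(a)$ or on a single "row" $\{y_i(n)\}_n$.

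For the commutativity, I would check $r_i(\beta(X^j_n)) = \beta(R_i^\ast(X^j_n))$ directly for each $j$, using $\beta(X^j_n) = a_j(n)^{-1}$, the identity $\beta(f_X(i,n)) = f_y(i,n)$, and the defining relation $a_j(n) = y_j(n) y_j(n+d_j)/F(j,n)$ from \eqref{a-y}. Since $F(j,n)$ is a monomial in $y_k$ for $k \neq j$, it is preserved by $r_i$ unless the vertex $v^i_{n'}$ appears as a neighbour of $v^j_n$ in the quiver $Q(\mg)$; these are precisely the cases cataloged by the non-trivial clauses of \eqref{eq:RonX} and \eqref{eq:RonX-exc}, and each clause matches the formula for $r_i(a_j(n))$ obtained from \eqref{Weyl-y} by a short direct computation. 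The diagonal case $j = i$ is simplest: one gets
$r_i(a_i(n)) = f_y(i,n-2d_i) / \bigl(a_i(n-d_i) f_y(i,n)\bigr),$
which matches the inverse of $\beta(R_i^\ast(X^i_n))$ given by Theorem \ref{R-action}.

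For the braid relations $(r_i r_j)^{m_{ij}} = \mathrm{id}$, since $r_i$ is the identity on $\{y_k(n) : k \neq i\}$, it suffices to evaluate on $y_i(n)$ and $y_j(n)$. Mirroring the reduced-word argument for $A_\ell$, I would write $r_{w_{(k)}}(y_i(n))$ as a product of $y_l(n')$ with $l \neq i,j$ (which are fixed under all subsequent $r_i, r_j$) times a rational function $g_k \in \C(a)$. The commutative diagram established above gives $g_k = \beta(R_{w_{(k)}}^\ast(X^i_n)) \cdot \text{(correction in } y\text{)}$, so the braid identity for the $\C(a)$-part is a direct consequence of $(R_iR_j)^{m_{ij}}(X^i_n) = X^i_n$ from Theorem \ref{thm:R-Weyl}, and the $y$-prefactors on the two sides coincide by construction.

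The main obstacle I expect is the non-simply-laced case analysis: the exceptional clauses in \eqref{eq:a-y-g} and in \eqref{eq:RonX-exc} (for the short–long adjacencies in $B_\ell, C_\ell, F_4, G_2$) must be matched term by term, and for $m_{ij} = 4, 6$ the braid word is long enough that several intermediate $g_k$ must be tracked. However, the tropical $X$-variable computation in the proof of Theorem \ref{thm:R-Weyl} already organizes precisely this bookkeeping, and the $\C(a)$-side inherits the same combinatorics through $\beta$; the only additional input needed is the verification that the $y$-prefactors (which are just monomials indexed by neighbours of the circles $P_{i,\gamma_i}$) balance under the two reduced expressions $s_is_j \cdots$ and $s_js_i \cdots$ of the longest element in the rank-two parabolic $\langle s_i, s_j \rangle$.
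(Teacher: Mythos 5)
Your first half (commutativity of \eqref{Weyl-y-X}) matches the paper's proof: a case-by-case computation of $r_i(a_j(n))$ using $\beta(f_X(i,n))=f_y(i,n)$ and the fact that $F(j,n)$ contains no $y_j$'s. That part is fine.

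The gap is in the second half. Your reduction of the braid relation on $y_i(n)$ to an identity in $\C(a)$ is correct in spirit (indeed $r_w(y_i(n)) = y_i(n)\cdot g_w$ with $g_w \in \C(a)$, since $r_i(y_i(n)) = y_i(n) f_y(i,n-2d_i)/(f_y(i,n-d_i)a_i(n-d_i))$ and $\C(a)$ is stable under all $r_j$ by the commutative diagram), but the claim that $g_{\text{left}}=g_{\text{right}}$ is ``a direct consequence of $(R_iR_j)^{m_{ij}}(X^i_n)=X^i_n$'' is where the argument breaks. The braid relation applied to $a_i(n) = y_i(n)y_i(n+d_i)/F(i,n)$ only controls the \emph{product} $g_w(n)g_w(n+d_i)$ up to the transform of $F(i,n)$, which itself involves the unknown transforms of $y_j$; extracting the equality of the individual factors $g_w(n)$ is circular or at best a telescoping argument you have not supplied. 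What the paper actually does is identify the required statement as an \emph{invariance} of specific nested ratios of the $f_X$'s (the identities \eqref{C-Weyl1}--\eqref{C-Weyl2} for $C_\ell$ and \eqref{G-Weyl1}--\eqref{G-Weyl2} for $G_2$, analogous to \eqref{A-Weyl1}--\eqref{A-Weyl2} for $A_\ell$), and these are obtained from the cluster braid relations evaluated not at $X^i_n$ or $X^j_n$ but at $X^k_n$ for a \emph{third} vertex $k$ adjacent to the circles involved (namely $j=\ell-2$ in \eqref{C-Weyl0}).

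This is fatal for $G_2$: the quiver $Q_m(G_2)$ has only the two families of vertices $v^1_n, v^2_n$, so no such third vertex exists, and the needed identity \eqref{G-Weyl1} cannot be read off from the braid relations on $Q_m(G_2)$ alone. The paper's workaround is to pass to the extended quiver $Q_m(G_2^{(1)\vee})$ of \S\ref{subsec:affine}, adding the affine circle $P_0$ attached to $P_1$, and to use the relation $(R_1^\ast R_2^\ast)^3(X^0_n) = (R_2^\ast R_1^\ast)^3(X^0_n)$ (which involves no $X^0$'s on the right-hand side of the resulting identity \eqref{G-Weyl1}) to produce the invariance statement that then descends to $\C(y)$ via $\beta$. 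Your proposal, which stays inside $Q_m(\mg)$ and appeals only to Theorem \ref{thm:R-Weyl} on the variables $X^i_n$, $X^j_n$, does not supply this input and would stall exactly at the rank-two computation for $G_2$ (and would need the analogous auxiliary-vertex bookkeeping spelled out for $B_\ell$, $C_\ell$, $F_4$ as well).
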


\begin{proof}
Note that \eqref{Weyl-y} is written as $r_i(y_i(n)) = \frac{f_y(i,n-2d_i)}{f_y(i,n-d_i)} \frac{y_i(n)}{a_i(n-d_i)}$, and that $F(i,n)$ \eqref{eq:a-y-g} does not include any $y_i(k)$. 
For $i \in S$, we have 
\begin{align*}
r_i(a_i(n)) &= \frac{r_i(y_i(n) y_i(n+d_i))}{F(i,n)}
\\
&= \frac{f_y(i,n-2d_i)}{f_y(i,n-d_i) a_i(n-d_i)} \frac{f_y(i,n-d_i)}{f_y(i,n) a_i(n)} a_i(n) 
= \frac{f_y(i,n-2d_i)}{f_y(i,n) a_i(n-d_i)}. 
\end{align*}
For $(i,j)$ such that $i \neq j$ and $d_i \geq d_j$, we have
\begin{align*}
r_i(a_j(n)) = 
\begin{cases}
\displaystyle{\frac{y_j(n) y_j(n+d_i)}{r_i(y_i(n+d_i)) F(i,n)} y_i(n+d_i)
= \frac{f_y(i,n)}{f_y(i,n-d_i)} a_i(n) a_j(n)} & v^j_n \leftarrow v^i_n,
\\[2mm]
\displaystyle{\frac{y_j(n) y_j(n+d_i)}{r_i(y_i(n)) F(i,n)} y_i(n)
= \frac{f_y(i,n-d_i)}{f_y(i,n-2d_i)} a_i(n-d_i) a_j(n)} & v^j_n \rightarrow v^i_n.
\end{cases} 
\end{align*}
Thus it follows that 
\begin{align}\label{comm-beta-R}
\beta(R_i^\ast(X^j_n)) = r_i(\beta(X^j_n))
\end{align}
for $i=j$, and for $i \neq j$ and $d_i \geq d_j$. 
In the same manner, we can show \eqref{comm-beta-R} in the case of $d_i < d_j$,
too.
Then the commutativity of the diagram \eqref{Weyl-y-X} is proved.

To prove the second claim, it is satisfactory to show
\begin{enumerate}
\item[(i)] 
$(r_{\ell} r_{\ell-1})^2 (y_j(n)) = (r_{\ell-1} r_\ell)^2 (y_j(n))$ 
for $j=\ell-1,\ell$ in the case of $C_\ell$, 
\item[(ii)] 
$(r_2 r_1)^3 (y_j(n)) = (r_1 r_2)^3 (y_j(n))$
for $j=1,2$ in the case of $G_2$.
\end{enumerate} 
(i): The non-trivial relations 
\begin{align}\label{C-Weyl0} 
  (R_\ell^\ast R_{\ell-1}^\ast)^2 (X^j_n) = (R_{\ell-1}^\ast R_\ell^\ast)^2 (X^j_n); ~ j=\ell-2,\ell-1,\ell
\end{align}
are equivalent to 
\begin{align}
  \label{C-Weyl1}
  \begin{split}
  &R_{\ell}^\ast \left(\frac{f_X(\ell-1,n-4)}{f_X(\ell-1,n-3)} R_{\ell-1}^\ast \left(\frac{f_X(\ell,n-5)}{f_X(\ell,n-3)} R_\ell^\ast\left(\frac{f_X(\ell-1,n-2)}{f_X(\ell-1,n-1)}\right)\right) \right)  \\
  & \quad \qquad = 
  \frac{f_X(\ell-1,n-4)}{f_X(\ell-1,n-3)} R_{\ell-1}^\ast \left(\frac{f_X(\ell,n-5)}{f_X(\ell,n-3)} R_\ell^\ast\left(\frac{f_X(\ell-1,n-2)}{f_X(\ell-1,n-1)}\right)\right),
  \end{split}
  \\
  \label{C-Weyl2}
  \begin{split}
  &R_{\ell-1}^\ast \left(\frac{f_X(\ell,n-5)}{f_X(\ell,n-3)} R_\ell^\ast \left(\frac{f_X(\ell-1,n-3)}{f_X(\ell-1,n-1)} R_{\ell-1}^\ast\left(\frac{f_X(\ell,n-4)}{f_X(\ell,n-2)}\right)\right)\right)
  \\
  & \quad \qquad = 
  \frac{f_X(\ell,n-5)}{f_X(\ell,n-3)} R_\ell^\ast \left(\frac{f_X(\ell-1,n-3)}{f_X(\ell-1,n-1)} R_{\ell-1}^\ast\left(\frac{f_X(\ell,n-4)}{f_X(\ell,n-2)}\right)\right).
  \end{split}  
\end{align}
In precisely, \eqref{C-Weyl1} is equivalent to \eqref{C-Weyl0} of $j=\ell-2$,
\eqref{C-Weyl1} and \eqref{C-Weyl0} of $j=\ell-1$ gives \eqref{C-Weyl2}, 
and \eqref{C-Weyl0} of $j=\ell$ is obtained from \eqref{C-Weyl1} and \eqref{C-Weyl2}.   
On the other hand, we have 
\begin{align*}
  &(r_{\ell-1} r_\ell)^2 (y_\ell(n)) 
  = 
  \frac{1}{y_\ell(n-3)}
  \frac{f_y(\ell,n-5)}{f_y(\ell,n-3)} r_{\ell} \left(\frac{f_y({\ell-1},n-3)}{f_y({\ell-1},n-1)} r_{{\ell-1}} \left(\frac{f_y(\ell,n-4)}{f_y(\ell,n-2)} \right) \right),
  \\
  &(r_\ell r_{\ell-1})^2 (y_\ell(n))  
  = 
  \frac{1}{y_\ell(n-3)}
  r_{\ell-1} \left(\frac{f_y(\ell,n-5)}{f_y(\ell,n-3)} r_{\ell} \left(\frac{f_y({\ell-1},n-3)}{f_y({\ell-1},n-1)} r_{{\ell-1}} \left(\frac{f_y(\ell,n-4)}{f_y(\ell,n-2)} \right) \right) \right).
\end{align*}
Then $(r_\ell r_{\ell-1})^2 (y_\ell(n)) = (r_{\ell-1} r_\ell)^2 (y_\ell(n))$ follows from \eqref{C-Weyl2}. In the same way, $(r_\ell r_{\ell-1})^2 (y_{\ell-1}(n)) = (r_{\ell-1} r_\ell)^2 (y_{\ell-1}(n))$ follows from \eqref{C-Weyl1}.
\\
(ii): We shall consider the quiver $Q' := Q_m(G_2^{(1) \vee})$, obtained from $Q_m(G_2)$ by adding a circle $P_0: v^0_1 \to v^0_2 \to \cdots v^0_{3m} \to v^0_1$ 
and arrows $v^1_n \to v^0_n, ~ v^0_n \to v^1_{n-1}$ for $n \in \Z$.
As remarked in \S \ref{subsec:affine}, the Weyl group action $W(G_2^{(1) \vee}) \subset \Gamma_{Q'}$ is obtained. 
The non-trivial relations on $\X$ for $Q'$ generated by $W(G_2) \subset W(G_2^{(1) \vee})$ are given by  
\begin{align}\label{G-Weyl0} 
  (R_1^\ast R_2^\ast)^3 (X^j_n) = (R_2^\ast R_2^\ast)^3 (X^j_n); ~ j=0,1,2,
\end{align}
and these are equivalent to the following relations among the functions $f^i_n:=f_X(i,n)$ for $i=1,2$: 
\begin{align}
  \begin{split}\label{G-Weyl1}
  &R_2^\ast \left(\frac{f^1_{n-7}}{f^1_{n-6}} R_1^\ast \left( \frac{f^2_{n-9}}{f^2_{n-6}} R_2^\ast \left(\frac{f^1_{n-5}}{f^1_{n-3}} R_1^\ast \left( \frac{f^2_{n-7}}{f^2_{n-4}} R_2^\ast \left(\frac{f^1_{n-2}}{f^1_{n-1}} \right)\right)\right)\right)\right)
  \\
  & \quad = \frac{f^1_{n-7}}{f^1_{n-6}} R_1^\ast \left( \frac{f^2_{n-9}}{f^2_{n-6}} R_2^\ast \left(\frac{f^1_{n-5}}{f^1_{n-3}} R_1^\ast \left( \frac{f^2_{n-7}}{f^2_{n-4}} R_2^\ast \left(\frac{f^1_{n-2}}{f^1_{n-1}} \right)\right)\right)\right),
  \end{split}
  \\
  \begin{split}\label{G-Weyl2}
  &\frac{f^2_{n-9}}{f^2_{n-6}} R_2^\ast \left(\frac{f^1_{n-5} f^1_{n-6}}{f^1_{n-3} f^1_{n-7}} R_1^\ast \left( \frac{f^2_{n-6} f^2_{n-7}}{f^2_{n-4} f^2_{n-9}} R_2^\ast \left(\frac{f^1_{n-2} f^1_{n-3}}{f^1_{n-1}f^1_{n-5}} R_1^\ast \left( \frac{f^2_{n-4}}{f^2_{n-7}} R_2^\ast \left(\frac{f^1_{n}}{f^1_{n-2}} \right)\right)\right)\right)\right)
  \\
  & \quad =
\frac{f^1_{n-5}}{f^1_{n-7}}
R_1^\ast \left(\frac{f^2_{n-5}}{f^2_{n-8}} R_2^\ast \left(\frac{f^1_{n-2} f^1_{n-6}}{f^1_{n-4} f^1_{n-5}} R_1^\ast \left( \frac{f^2_{n-3} f^2_{n-8}}{f^2_{n-5} f^2_{n-6}} R_2^\ast \left(\frac{f^1_{n} f^1_{n-4}}{f^1_{n-1}f^1_{n-2}} R_1^\ast \left( \frac{f^2_{n-6}}{f^2_{n-3}}  \right)\right)\right)\right)\right).
  \end{split}
\end{align}
Actually, \eqref{G-Weyl1} and \eqref{G-Weyl2} are respectively equivalent to \eqref{G-Weyl0} of $j=0$ and $j=1$.  
By combining \eqref{G-Weyl1} and \eqref{G-Weyl2} we obtain \eqref{G-Weyl0} of $j=2$. Note that \eqref{G-Weyl1} and \eqref{G-Weyl2} include no $X^0_n$. 
Further, one sees that $(r_1 r_2)^3 (y_1(n)) = (r_2 r_1)^3 (y_1(n))$ follows from \eqref{G-Weyl1}, and $(r_1 r_2)^3 (y_2(n)) = (r_2 r_1)^3 (y_2(n))$ follows from \eqref{G-Weyl1} and \eqref{G-Weyl2}.
\end{proof}

\subsection{Invariance of $\mathrm{Im} \chi_q$}

In the following we identify $a_i(n)$ with $(X_n^i)^{-1}$ and $f_y(i,n)$ with $f_X(i,n)$, in $\C(y)$ via the map $\beta$.
For a generic $q$ corresponding to the infinite quiver $Q(\mg)$,
from Lemma \ref{lem:AY-ay} it follows that for $Y_{i,a q^k} \in \mathbf{Y}_{a,q}$ we have  
$\phi_a(Y_{i,a q^{k}}(1+A_{i,a q^{k+d_i}}^{-1})) = y_i(n)(1+X_n^i)$ for some $n \in d \Z$. Thus we have $\phi_a(\mathrm{Im} \chi_q \cap \mathbf{Y}_{a,q})$ equal to
$$
  \bY_{\chi_q} := \bigcap_{i \in S} \Z[y_j(n)^\pm; j \neq i, n \in d \Z] \otimes \Z[y_i(n)(1 + X_n^i); n \in d \Z].
$$

Next we consider the case that $q$ is a root of unity, $\ve^{2md'} = 1$, which corresponds to the periodic quiver $Q_m(\mg)$. Due to Remark \ref{rem:q-root}, 
we have $\phi_a(\mathrm{Im} \chi_\ve \cap \mathbf{Y}_{a,\ve})$ equal to
$$
  \bY_{\chi_\ve} := \bigcap_{i \in S} \Z[y_j(n)^\pm; j \neq i, n \in d \Z/d'm\Z] \otimes \Z[y_i(n)(1 + X_n^i); n \in d \Z/d'm\Z].
$$

\begin{prop}\label{prop:qch-weyl}
$\bY_{\chi_\ve}$ is invariant under the action of $W(\mg)$.
\end{prop}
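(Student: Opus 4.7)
The plan is to reduce to simple reflections and then prove a much stronger pointwise invariance. Since $W(\mg)$ is generated by $\{r_i\}_{i \in S}$ (Theorem \ref{thm:R-Weyl}), it suffices to show that each $r_i$ preserves $\bY_{\chi_\ve}$. Denote by $\mathcal{K}_i := \Z[y_j(n)^{\pm 1}; j \neq i, n] \otimes \Z[y_i(n)(1+X_n^i); n]$ the $i$-th factor in the intersection $\bY_{\chi_\ve} = \bigcap_{i \in S} \mathcal{K}_i$. The core of the plan is to prove the stronger claim that $r_i$ acts as the \emph{identity} on $\mathcal{K}_i$; since $\bY_{\chi_\ve} \subseteq \mathcal{K}_i$, this forces $r_i|_{\bY_{\chi_\ve}} = \mathrm{id}$, whence $r_i(\bY_{\chi_\ve}) = \bY_{\chi_\ve}$.

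Because $r_i$ is a ring homomorphism, it suffices to verify that $r_i$ fixes each ring generator of $\mathcal{K}_i$. By formula \eqref{Weyl-y}, $r_i(y_j(n)) = y_j(n)$ for every $j \neq i$, so the first tensor factor is preserved pointwise. For the remaining generators $y_i(n)(1+X_n^i)$, combining \eqref{Weyl-y} with the expression $r_i(X_n^i) = f_y(i,n)/(X_{n-d_i}^i\, f_y(i,n-2d_i))$ (obtained from \eqref{eq:RonX} via the commutative diagram \eqref{Weyl-y-X}) together with the relation $y_i(n-d_i)\, X_{n-d_i}^i = F(i,n-d_i)/y_i(n)$, the rational expression for $r_i(y_i(n)(1+X_n^i))$ collapses to $y_i(n)(1+X_n^i)$ as soon as the following identity is available:
\[
(1+X_n^i)\, f_y(i,n-d_i) \;=\; f_y(i,n) + X_{n-d_i}^i\, f_y(i,n-2d_i). \qquad (\star)
\]

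The sole technical point I expect to grind through is the verification of $(\star)$. Expanding each $f_y$ using its definition (a geometric-type sum $\sum_{k=0}^{p-1}\prod_{j=0}^{k-1} X_{n'-jd_i}^i$, where $p = d'm/d_i$ is the length of the circle $P_{i,\gamma_i}$) and invoking the cyclic identification $X_{n-pd_i}^i = X_n^i$, both sides of $(\star)$ expand to the same polynomial in the $X$-variables along the circle, with the wrap-around monomial $\prod_{j=0}^{p-1} X_{n-jd_i}^i$ appearing on each side and cancelling. This combinatorial check is the main (but routine) hurdle, and it is the only place where the periodicity of $Q_m(\mg)$ is essentially used; granted it, the above argument concludes the proof.
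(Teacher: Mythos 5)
Your proof is correct and takes essentially the same route as the paper: both reduce to checking that $r_i$ fixes the generators $y_i(n)(1+X^i_n)$ of the $i$-th factor $\mathcal{K}_i$, and your identity $(\star)$ is precisely the paper's key relation \eqref{Xf-relation} after the shift $n \mapsto n-d_i$. The cyclic wrap-around verification you outline is exactly what the paper dismisses as ``can be checked easily.''
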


\begin{proof}
It is satisfactory to check that $y_i(n) (1 + X_n^i)$ is an invariance of $r_i$. 
By using an identity: 
\begin{align}\label{Xf-relation}
  f_X(i,n+d_i) + X^i_n f_X(i,n-d_i) = (1 + X^i_{n+d_i}) f_X(i,n),
\end{align}
which can be checked easily, we obtain 
\begin{align*}
r_i \left(y_i(n) (1 + X_n^i)\right) 
&= \frac{f_X(i,n-2 d_i)}{f_X(i,n-d_i)} X_{n-d_i} y_i(n) 
\left( 1 + \frac{f_X(i,n)}{X^i_{n-d_i} f_X(i,n-2 d_i)} \right)
\\
&= \frac{y_i(n)}{f_X(i,n-d_i)}(X^i_{n-d_i} f_X(i,n-2d_i) + f(n))
\\
&\stackrel{\eqref{Xf-relation}}{=} 
y_i(n) (1 + X_n^i).
\end{align*}
\end{proof}

Now we come back to the case of generic $q$, and consider an analogue of the map $r_i$. Define $\C[y^\pm] :=  \C[y_i(n), y_i(n)^{-1}; i \in S, n \in d\Z]$, and let $I_X$ be the ideal of $\C[y^\pm]$ generated by $X^i_n$ for $i \in S, ~n \in d\Z$.
We write $\widehat{\C}_X(y)$ for the quotient field of the completion of $\C[y^{\pm}]$ given by $\lim_{\leftarrow} (\C[y^\pm]/I_X^k)$.
For $(i,n) \in S \times d\Z$, we define an `infinite' version of $f_X(i,n)$ as a formal power series in the $X^i_n$:
$$
  \hat{f}_X(i,n) = 1 + \sum_{k=0}^\infty X^i_n X^i_{n-d_i} \cdots X^i_{n-kd_i}.
$$
For $i \in S$ define a map  $\hat{r}_i : \C(y) \to \widehat{\C}_X(y)$ similarly as \eqref{Weyl-y} by 
\begin{align}\label{eq:r-hat}
\hat{r}_i(y_j(n)) = 
\begin{cases}
\displaystyle{y_i(n) X^i_{n-d_i} \frac{\hat{f}_X(i,n-2d_i)}{\hat{f}_X(i,n-d_i)} }& j = i,
\\[1mm]
y_j(n) & j \neq i.
\end{cases}
\end{align}

\begin{prop}\label{prop:qch-weyl-inf}
$\bY_{\chi_q}$ is invariant under the action of $\hat{r}_i ~(i \in S)$.
\end{prop}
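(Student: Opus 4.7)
The strategy mirrors the proof of Proposition~\ref{prop:qch-weyl}. By definition $\hat{r}_i$ fixes $y_j(n)^{\pm 1}$ for every $j \neq i$ and every $n$, so each such generator of the $k$-th factor in the intersection defining $\bY_{\chi_q}$ (for $k$ not involving $i$) is trivially preserved, and it suffices to verify that the remaining distinguished generator $y_i(n)(1+X^i_n)$ is itself invariant under $\hat{r}_i$.

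The key new ingredient, replacing the finite recursion \eqref{Xf-relation}, is the formal identity
$$\hat{f}_X(i,n) \;=\; 1 + X^i_n\,\hat{f}_X(i,n-d_i),$$
which is immediate from the defining power series of $\hat{f}_X(i,n)$ in $\widehat{\C}_X(y)$. First I would compute $\hat{r}_i(X^i_n)$: writing $X^i_n = F(i,n)/(y_i(n)\,y_i(n+d_i))$ from \eqref{a-y} and noting that $F(i,n)$ involves only the $y_j$ for $j$ adjacent to $i$ in the Dynkin diagram (hence is fixed by $\hat{r}_i$), a direct calculation using \eqref{eq:r-hat} gives
$$\hat{r}_i(X^i_n) \;=\; \frac{\hat{f}_X(i,n)}{X^i_{n-d_i}\,\hat{f}_X(i,n-2d_i)}.$$

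Substituting this into $\hat{r}_i\bigl(y_i(n)(1+X^i_n)\bigr)$ yields
$$y_i(n)\,X^i_{n-d_i}\frac{\hat{f}_X(i,n-2d_i)}{\hat{f}_X(i,n-d_i)}\cdot\frac{X^i_{n-d_i}\hat{f}_X(i,n-2d_i)+\hat{f}_X(i,n)}{X^i_{n-d_i}\,\hat{f}_X(i,n-2d_i)}.$$
Applying the key identity twice (to $\hat{f}_X(i,n-d_i)$ and to $\hat{f}_X(i,n)$), the middle factor simplifies as
$$X^i_{n-d_i}\hat{f}_X(i,n-2d_i)+\hat{f}_X(i,n) \;=\; \bigl(\hat{f}_X(i,n-d_i)-1\bigr)+\bigl(1+X^i_n\hat{f}_X(i,n-d_i)\bigr) \;=\; (1+X^i_n)\hat{f}_X(i,n-d_i),$$
and the whole expression telescopes back to $y_i(n)(1+X^i_n)$.

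The main obstacle I anticipate is not the combinatorial identity, which is morally the same as in the finite case, but rather the subtlety that $\hat{r}_i$ a priori takes values in the completion $\widehat{\C}_X(y)$; one must confirm that the image of a polynomial in $y_i(n)(1+X^i_n)$ and Laurent monomials in the other $y_j(n)$ genuinely lies back in the subring $\bY_{\chi_q}$ rather than being an honest formal power series. The cancellation above shows this concretely on the generator $y_i(n)(1+X^i_n)$, and combined with the fixed-point property on the remaining generators this extends multiplicatively to all of $\bY_{\chi_q}$.
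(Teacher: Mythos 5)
Your proof is correct and follows essentially the same route as the paper: the paper's proof of Proposition \ref{prop:qch-weyl-inf} simply observes that $\hat{f}_X(i,n)$ satisfies the analogue of \eqref{Xf-relation} and then repeats the computation from Proposition \ref{prop:qch-weyl} showing $y_i(n)(1+X^i_n)$ is fixed, which is exactly your cancellation (your one-step recursion $\hat{f}_X(i,n)=1+X^i_n\hat{f}_X(i,n-d_i)$, valid precisely because the sum is infinite and does not wrap around a cycle, immediately implies the paper's three-term identity). The only cosmetic slip is in your reduction step: the cleanest phrasing is that $\hat{r}_i$ fixes every generator of the $i$-th factor of the intersection (namely $y_j(n)^{\pm1}$ for $j\neq i$ and $y_i(n)(1+X^i_n)$), hence fixes that factor and a fortiori $\bY_{\chi_q}$ pointwise, rather than arguing factor by factor over $k\neq i$.
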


\begin{proof}
It is easy to see that $\hat{f}_X(i,n)$ satisfies the similar relation as \eqref{Xf-relation}:
\begin{align}
  \hat{f}_X(i,n+d_i) + X^i_n \hat{f}_X(i,n-d_i) = (1 + X^i_{n+d_i}) \hat{f}_X(i,n).
\end{align}
The rest is same as the proof of Proposition \ref{prop:qch-weyl}.
\end{proof}

\begin{remark}
The maps $\hat{r}_i$ are not related to mutation sequences, because we have no circle on the infinite quiver $Q(\mg)$ anymore.
Moreover they are not extended to the endomorphism on $\widehat{\C}_X(y)$, 
thus we cannot naively discuss if $\hat{r}_i ~(i \in S)$ are related to the Weyl group.\footnote{~The author thanks Ryo Fujita for pointing out this fact.}
We wonder if there is any correspondence between $\hat{r}_i$ and the screening operator on the lattice \eqref{lattice-screeing}.
\end{remark}

\begin{remark}\label{rem:braidacion}
The Weyl group action on the tropical $X$-variables is related to the braid group action on the $\ell$-integral weight lattice introduced in \cite{CM05} as follows.

First we extend the action $R_i^{\mathrm{trop}}$ on the tropical $X$-seed $(Q_m(\mg),\mathbf{x})$ of positive tropical sign (Corollary \ref{cor:tropR}) to the case of the infinite quiver $Q(\mg)$ as follows: let $I$ be the vertex set of $Q(\mg)$, and set a tropical semifield $\mathbb{P} := \mathbb{P}_{\mathrm{\trop}}(\mathbf{u}); \mathbf{u}=(u^i_n; v^i_n \in I)$. 
Consider $\X_{Q(\mg)}(\mathbb{P})$ and its subset $\mathcal{X}^{i +}_{Q(\mg)}(\mathbb{P}) := \{\mathbf{x} \in \mathbb{P}^{|I|}; ~x^i_n > 0 \text{ for all } n \in d \Z\}$.
The map $\hat{R}_i^{\mathrm{trop}}: \X^{i +}_{Q(\mg)}(\mathbb{P}) \to \X_{Q(\mg)}(\mathbb{P})$ given by \eqref{eq:trop-R1} and \eqref{eq:trop-R2} is well-defined. 
Although we do not know if $\hat{R}_i^{\mathrm{trop}}$ belongs to the cluster modular group of $Q(\mg)$, we can show that $\hat{R}_i^{\mathrm{trop}} ~(i \in S)$ satisfy the braid relations in the same manner as Theorem \ref{thm:R-Weyl}. 

Then, one sees that the map $\hat{R}_i^{\mathrm{trop}}$ is related to the action of $T_i$ on the $\ell$-integral weight lattice \cite[(3.7)]{CM05}, by `identifying' $(x^i_n)^{-1}$ with the $\ell$-simple root $\boldsymbol{\alpha}_{i,a}$ for some $a \in \C^\times$. (This identification is essentially same as what pointed out at \cite[Remark 3.4]{CM05}.)
This is also observed with \eqref{eq:r-hat} formally, by `setting $\hat{f}_X(i,n)$ to be $1$' and `identifying' $y_i(n)$ with the $\ell$-fundamental weight $\boldsymbol{\omega}_{i,a}$ for some $a \in \C^\times$.       
We remark that \cite[Proposition 3.6]{CM05} seems to correspond to Theorem \ref{thm:R-Weyl} (2). 
\end{remark}

\section{Relation with the lattice Toda field theory} 

\subsection{The lattice Toda field}

We shall recall the lattice Toda field theory introduced in \cite{IH00, I02},
by combining with the results obtained in the previous sections.
For a finite dimensional simple Lie algebra $\mg$, 
let $s_i(n)$ be commuting variables on the lattice $(i,n) \in S \times d\Z$, satisfying the following Poisson relations: 
\begin{align}\label{g-x-Poisson}
  \begin{split}
  &\{ s_i(n), s_i(m)\} =  s_i(n) s_i(m); ~n \equiv m(\mathrm{mod } \, d_i),
  \\
  &\{s_i(n), s_j(m)\} = - \frac{1}{2}  s_i(n) s_j(m); ~B_{ij} \neq 0, ~n \equiv m \,(\mathrm{mod ~ min}( d_i, d_j)),
  \\
  &\{s_i(n), s_j(n)\} = - \frac{1}{2} s_i(n) s_j(n); ~i < j, ~B_{ij} \neq 0,
  \end{split}
\end{align}
where we assume $n < m$. The others are Poisson commutative.  
Let $\C(\mathbf{X})$ be the rational functional field generated by the $X$-variables for $Q(\mg)$. 
Define $\C(s):= \C(s_i(n);~i \in S, ~n \in d\Z)$ and an embedding map $\sigma : \C(\mathbf{X}) \to \C(s)$ given by    
\begin{align}\label{X-s}
  X^i_n = \frac{s_i(n)}{s_i(n+d_i)}.
\end{align}

\begin{prop}[Cf. \cite{IH00}]
The map $\sigma$ is a Poisson map.
\end{prop}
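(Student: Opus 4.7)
The plan is to observe that both the source and target Poisson structures are log-canonical in the given generators, so, writing $\{s_a,s_b\}=c_{ab}\,s_as_b$ with $c_{ab}$ read off from \eqref{g-x-Poisson} (and extended by antisymmetry), the Leibniz rule yields
\[
\{\sigma(X^i_n),\sigma(X^j_m)\}=\Delta_{ij}(n,m)\,\sigma(X^i_n)\sigma(X^j_m),
\]
where
\[
\Delta_{ij}(n,m):=c_{s_i(n),s_j(m)}-c_{s_i(n),s_j(m+d_j)}-c_{s_i(n+d_i),s_j(m)}+c_{s_i(n+d_i),s_j(m+d_j)}.
\]
Thus the claim reduces to checking the scalar identity $\Delta_{ij}(n,m)=\varepsilon_{v^i_n,v^j_m}$ for each pair of vertices of $Q(\mg)$.

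The key structural observation is that, as a function of $(n,m)$ restricted to the appropriate congruence class modulo $\min(d_i,d_j)$, the coefficient $c_{s_i(n),s_j(m)}$ is a step function in the sign of $n-m$ (with an extra rule on the diagonal $n=m$ governed by the ordering of $i$ and $j$). Consequently $\Delta_{ij}(n,m)$ is a discrete second difference of this step function, which automatically vanishes away from the jump locus and equals $\pm 1$ (as a sum of four $\pm\tfrac12$'s, or one $\pm 1$ in the $i=j$ case) precisely at the pairs $(n,m)$ where the jump occurs. These jump positions are exactly the pairs for which $Q(\mg)$ has an arrow, so matching $\Delta_{ij}$ against $\varepsilon_{v^i_n,v^j_m}$ becomes a direct inspection. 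Concretely, the verification splits into three cases: $i=j$ (use the first line of \eqref{g-x-Poisson}, giving $\pm 1$ for $m=n\pm d_i$ and $0$ otherwise, which matches the arrows $v^i_n\to v^i_{n+d_i}$); $i\neq j$ with $B_{ij}=0$ (both sides vanish because $s_i$ and $s_j$ Poisson-commute and no arrow joins the two $i$- and $j$-columns); and $i\neq j$ with $B_{ij}\neq 0$ (combine the second and third lines of \eqref{g-x-Poisson} with antisymmetry to compute $\Delta_{ij}$ at the four corners of the rectangle with vertices $(n,m)$, $(n,m+d_j)$, $(n+d_i,m)$, $(n+d_i,m+d_j)$).

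The main obstacle is the bookkeeping in the non-simply-laced cases $B_\ell, C_\ell, F_4, G_2$, where $d_i\neq d_j$ forces the congruence $n\equiv m\pmod{\min(d_i,d_j)}$ to interact nontrivially with the half-integer or doubled shifts appearing in the arrow set of $Q(\mg)$. One must carefully align, for instance, the arrows $v^\ell_n\to v^{\ell-1}_{n-\frac12}$ of $Q(B_\ell)$, or the asymmetric pair $v^\ell_n\to v^{\ell-1}_n$ together with $v^{\ell-1}_{n+2}\to v^\ell_n$ of $Q(C_\ell)$, and the $G_2$ triple-step arrows, with the correct sequence of $\pm\tfrac12$'s coming from the $\min(d_i,d_j)$-periodic step function on the $s$-side. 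Once this alignment is made in each of these Dynkin types, the required identity $\Delta_{ij}(n,m)=\varepsilon_{v^i_n,v^j_m}$ drops out, and the fact that $\sigma$ is an \emph{embedding} (i.e. injective) is immediate since the $X^i_n$ are algebraically independent and their images under $\sigma$ remain so in $\C(s)$.
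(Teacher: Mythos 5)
Your proposal is correct. Note that the paper itself gives no proof of this proposition, deferring entirely to the reference [IH00]; so there is nothing to compare against, but your argument is the natural direct verification and it checks out. The reduction via the Leibniz rule to the scalar identity $\Delta_{ij}(n,m)=\ve_{v^i_n,v^j_m}$, with $\Delta_{ij}$ a discrete second difference of the step-function coefficients $c_{s_i(n),s_j(m)}$ (constant in the sign of $n-m$ off the diagonal, with the diagonal value fixed by the ordering of $i$ and $j$), is sound: I spot-checked the cases $i=j$, the adjacent pair in $A_\ell$, the short--long pairs $(\ell,\ell-1)$ in $B_\ell$ and $C_\ell$, and the pair $(2,1)$ in $G_2$, and in each case the four-corner sum of $\pm\tfrac12$'s lands on $\pm1$ exactly at the arrows of $Q(\mg)$ and on $0$ elsewhere, with the congruence conditions in \eqref{g-x-Poisson} matching the disconnectedness of the relevant columns of the quiver. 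The only cosmetic remark is that injectivity of $\sigma$ is not part of the assertion being proved, so the final sentence is not needed.
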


Let $\mathcal{H}$ be the Hamiltonian function for the lattice Toda field equation given by
\begin{align}
 \label{g-Hamiltonian}
  \mathcal{H} = \sum_{i \in S} ~ \mathcal{H}_i;
  \qquad
  \mathcal{H}_i = \sum_{n \in d {\mathbb{Z}}} ~ s_i(n),
\end{align}
and define the lattice Toda field equation by 
$\frac{\partial}{\partial t} \log X^i_n := \{ \mathcal{H} , \log X^i_n \}$,
which is a set of differential-difference equations:
\begin{align}
  \label{eq-Toda-classical}
  \begin{split}
  \frac{\partial}{\partial t} \log X^i_n 
  =
  \Bigl(
  \sum_{j: j < i} ~
  \sum_{k=1}^{-C_{ji}}
  ~ s_j(n + k d_{ij})
  \Bigr)
  -
  s_i(n) - s_i(n + d_i)
  +
  \Bigl(
  \sum_{j:j > i} ~
  \sum_{k=0}^{-C_{ji}-1}
  ~ s_j(n + k d_{ij})
  \Bigr),
  \end{split}
\end{align}
where we set $d_{ij} = \min(d_i, d_j)$.

\begin{remark}
By taking a continuum limit of a coordinate $n \in d \Z$ with 
$s_i(n) \to s_i := s_i(z,t)$, $\log X^i_n$ reduces to $-d_i \frac{\partial}{\partial z} \log s_i$, and \eqref{eq-Toda-classical} reduces to the Toda field equation:
\begin{equation}
  \label{g-Toda-eq}
  \frac{\partial^2}{\partial t \partial z} \log \, s_i
   =
   \sum_{k \in S} ~ \frac{C_{ji}}{d_i} s_j.
\end{equation}

\end{remark}

On the other hand, for $i \in S$ define linear operators $\mathcal{S}_i$ as a lattice analogue of \eqref{eq:q-screening}:
\begin{align}\label{lattice-screeing}
  \mathcal{S}_i : \C(y) \to 
                  \C(y)_i := \oplus_{n \in d \Z} \C(y) \otimes s_i(n);
               ~y_j(n) \mapsto \delta_{i,j} y_i(n) s_i(n)
\end{align}
with the Leibniz rule.
Especially we have $\mathcal{S}_i : y_j(n)^{-1} \mapsto -\delta_{i,j} y_j(n)^{-1}s_i(n)$. 
Let $\C(y)_i'$ be the quotient of $\C(y)_i$ with relations
\begin{align}
  s_i(n+d_i) = a_i(n) s_i(n).
\end{align} 

By using \eqref{a-y} and the Poisson isomorphism $\beta : \C(\mathbf{X}) \to \C(a)$, we obtain the following.

\begin{prop}[\cite{I02}]
It holds that $\mathcal{S}_i \cdot X^j_n = \{\mathcal{H}_i, X^j_n\}$ for $i,j \in S$ and $n \in d \Z$.
\end{prop}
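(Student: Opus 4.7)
The plan is to compute both sides directly and match them, using two coordinate descriptions of $X^j_n$: on the screening side, $X^j_n = F(j,n)/\bigl(y_j(n)\,y_j(n+d_j)\bigr)$ from \eqref{a-y} and \eqref{eq:a-y-g}; on the Poisson side, $\sigma(X^j_n) = s_j(n)/s_j(n+d_j)$ from \eqref{X-s}.

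On the screening side, the Leibniz rule with $\mathcal{S}_i(y_k(m)) = \delta_{ik}y_k(m)s_i(m)$ gives
\begin{equation*}
\frac{\mathcal{S}_i \cdot X^j_n}{X^j_n} \;=\; \frac{\mathcal{S}_i\cdot F(j,n)}{F(j,n)} \;-\; \delta_{ij}\bigl(s_i(n) + s_i(n+d_i)\bigr),
\end{equation*}
with the first term equal to $\sum_m s_i(m)$ taken over the lattice points at which $y_i(m)$ appears in $F(j,n)$, which is read off directly from \eqref{eq:a-y-g}. On the Poisson side, Leibniz yields
\begin{equation*}
\frac{\{\mathcal{H}_i, X^j_n\}}{X^j_n} \;=\; \sum_{m \in d\Z}\!\left(\frac{\{s_i(m),s_j(n)\}}{s_j(n)} - \frac{\{s_i(m),s_j(n+d_j)\}}{s_j(n+d_j)}\right).
\end{equation*}
Although each sum is formally divergent, by \eqref{g-x-Poisson} the summand $\{s_i(m),s_j(k)\}/s_j(k)$ is piecewise constant in $m$, equal to $-c\,s_i(m)$ for $m<k$ and $+c\,s_i(m)$ for $m>k$ along the progression $m\equiv k\pmod{\min(d_i,d_j)}$, with $c=1$ if $i=j$ and $c=\tfrac12$ if $i\neq j$ and $B_{ij}\neq 0$; the anchor $m=k$ contributes $-\tfrac12\,\mathrm{sgn}(j-i)\,s_i(k)$ in the second case. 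The difference of the two sums telescopes to a finite sum of $s_i(m)$ over a window $W' \subset d\Z$ lying between $n$ and $n+d_j$, with $|W'| = d_j/\min(d_i,d_j)$.

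Matching the two sides case by case: for $i=j$, the $F$-term vanishes and both sides equal $-X^j_n\bigl(s_i(n)+s_i(n+d_i)\bigr)$; for $B_{ij}=0$, both sides vanish; for $i\neq j$ with $B_{ij}\neq 0$, one has $|W'| = |C_{ij}|$, which equals the number of $y_i$-factors appearing in $F(j,n)$, and the lattice positions align. The main obstacle is verifying this last matching in the four exceptional rows of \eqref{eq:a-y-g} (for the non-simply-laced types $B_\ell,C_\ell,F_4,G_2$), where $d_i\neq d_j$ and $F(j,n)$ contains multiple shifts of $y_i$; this amounts to a routine combinatorial check of the structure of $Q(\mg)$ near $v^j_n$ against the congruence window from the Poisson telescoping, after which the proposition follows by termwise equality.
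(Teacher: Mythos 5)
The paper offers no proof of this proposition beyond the one-line remark that it follows from \eqref{a-y} and the Poisson isomorphism $\beta$, deferring to \cite{I02}; your direct verification — Leibniz on both sides, telescoping the formally divergent sums $\sum_m\{s_i(m),\,\cdot\,\}$, and matching the resulting finite window against the $y_i$-content of $F(j,n)$ — is exactly the intended route, and the skeleton closes. Two details need repair before it is watertight. First, your piecewise-constant rule carries the wrong sign in the diagonal case: with the convention of \eqref{g-x-Poisson} (bracket stated for $n<m$), one has $\{s_i(m),s_i(k)\}/s_i(k)=+s_i(m)$ for $m<k$ and $-s_i(m)$ for $m>k$, the opposite of your $\mp c\,s_i(m)$ with $c=1$; with your sign the telescoped difference would come out $+\bigl(s_i(n)+s_i(n+d_i)\bigr)$ rather than the $-\bigl(s_i(n)+s_i(n+d_i)\bigr)$ that both the screening side and \eqref{eq-Toda-classical} require, so the $i=j$ conclusion you assert only follows after fixing this (your $i\neq j$ signs, including the anchor $-\tfrac12\,\mathrm{sgn}(j-i)s_i(k)$, are consistent with the paper). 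Second, $|W'|=d_j/\min(d_i,d_j)$ is an $i\neq j$ statement — for $i=j$ the window is $\{n,n+d_i\}$ of size two — which is harmless since you treat that case separately, but should be said. With these corrections the final matching does go through uniformly: for adjacent $i\neq j$ one has $d_j/d_{ij}=-C_{ij}$ in finite type, the anchor term makes the window $(n,n+d_j]$ for $i<j$ and $[n,n+d_j)$ for $i>j$, and these are precisely the arguments of the $y_i$-factors in \eqref{eq:a-y-g}, including the exceptional rows (e.g.\ $y_\ell(n)y_\ell(n+\tfrac12)$ in $F(\ell-1,n)$ for $B_\ell$ against $-C_{\ell,\ell-1}=2$, and $y_1(n+1)y_1(n+2)y_1(n+3)$ in $F(2,n)$ for $G_2$ against $-C_{12}=3$), so the "routine combinatorial check" you defer is indeed routine and succeeds.
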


\subsection{Cluster realization of the $\tau$-function}

Let $\tau_i(n)$ for $(i,n) \in S \times d\Z$ be a function in $t \in \R$ satisfying the bilinear equation:
\begin{align}\label{gToda-tau-eq}
  D_t \tau_i(n) \cdot \tau_i(n+d_i) 
  =
  \prod_{j:j<i} \prod_{k=1}^{-C_{ij}} \tau_j(n+k d_{ij}) \cdot
  \prod_{j:j>i} \prod_{k=0}^{-C_{ij}-1} \tau_j(n+k d_{ij}),  
\end{align} 
where $D_t$ is Hirota's bilinear operator acting on an ordered pair of two functions $f$ and $g$ as $D_t f \cdot g := \frac{\partial f}{\partial t} g - f \frac{\partial g}{\partial t}$.
We write $T_i(n)$ for the r.h.s. of \eqref{gToda-tau-eq}. 
It is easy to see the following lemma.

\begin{lem}[\cite{IH00}]
Set 
\begin{align}\label{s-tau}
  s_i(n) = \frac{T_i(n)}{\tau_i(n) \tau_i(n+d_i)}.
\end{align}
Then the $X^i_n$ \eqref{X-s} satisfy the lattice Toda field equation \eqref{eq-Toda-classical}.
\end{lem}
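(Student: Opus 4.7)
The plan is to differentiate $\log X^i_n$ directly, using the bilinear equation \eqref{gToda-tau-eq} to interpret each $s_i(n)$ as a difference of $\tau$-derivatives. Dividing both sides of \eqref{gToda-tau-eq} by $\tau_i(n)\tau_i(n+d_i)$ gives
$$s_i(n) \;=\; \phi_i(n) - \phi_i(n+d_i), \qquad \phi_i(n) := \partial_t\log\tau_i(n).$$
From \eqref{X-s} and \eqref{s-tau} we then expand
$$\log X^i_n \;=\; \log s_i(n) - \log s_i(n+d_i) \;=\; \bigl[\log T_i(n) - \log T_i(n+d_i)\bigr] \;-\; \log\tau_i(n) \;+\; \log\tau_i(n+2d_i).$$
Differentiating in $t$, the last two terms give $\phi_i(n+2d_i)-\phi_i(n) = -s_i(n) - s_i(n+d_i)$, which already accounts for the diagonal part of \eqref{eq-Toda-classical}.

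It therefore suffices to establish the identity
$$\partial_t\log T_i(n) - \partial_t\log T_i(n+d_i) \;=\; \sum_{j<i}\sum_{k=1}^{-C_{ji}} s_j(n+k d_{ij}) \;+\; \sum_{j>i}\sum_{k=0}^{-C_{ji}-1} s_j(n+k d_{ij}).$$
The left-hand side expands, per the definition of $T_i(n)$, into a sum of terms $[\phi_j(n+kd_{ij}) - \phi_j(n+d_i+kd_{ij})]$ whose $k$-range is governed by $C_{ij}$, whereas the right-hand side consists of terms $[\phi_j(n+kd_{ij}) - \phi_j(n+kd_{ij}+d_j)]$ with $k$-range governed by $C_{ji}$. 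The two are reconciled by the symmetry $d_i C_{ij} = d_j C_{ji}$ of $\mathbf{B}$ together with the fact that in finite type either $d_i/d_j$ or $d_j/d_i$ is a positive integer.

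Concretely, when $d_j \le d_i$ a single LHS term telescopes as
$$\phi_j(m) - \phi_j(m+d_i) \;=\; \sum_{\ell=0}^{d_i/d_j-1} \bigl[\phi_j(m+\ell d_j) - \phi_j(m+(\ell+1)d_j)\bigr] \;=\; \sum_{\ell=0}^{d_i/d_j-1} s_j(m+\ell d_j),$$
and combining with $-C_{ji} = -C_{ij} \cdot d_i/d_j$ produces exactly the $-C_{ji}$ terms on the RHS. When $d_j > d_i$ the roles reverse: the $-C_{ij} = d_j/d_i$ terms in the LHS sum over $k$ telescope into a single $s_j(m) = \phi_j(m) - \phi_j(m+d_j)$, which matches the single ($-C_{ji}=1$) term on the RHS. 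I would perform this check for the four combinations of orderings ($i \lessgtr j$ crossed with $d_i \lessgtr d_j$); each case is a one-line telescoping, and in each the shifted ranges ($k$ starting at $0$ for $j>i$ versus $1$ for $j<i$) line up correctly.

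The main obstacle is purely notational: one must track that the bilinear equation is indexed by $C_{ij}$ while the Toda equation is indexed by $C_{ji}$, and that the product and sum ranges shift by one between the $j<i$ and $j>i$ cases. Once the telescoping above is in hand, the identity follows mechanically, completing the proof.
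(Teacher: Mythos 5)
Your proposal is correct and follows essentially the same route as the paper's proof: rewrite the bilinear equation as $s_i(n)=\partial_t\log\bigl(\tau_i(n)/\tau_i(n+d_i)\bigr)$, handle the diagonal terms via $\log\tau_i(n+2d_i)-\log\tau_i(n)$, and convert the $T_i$-ratio into the off-diagonal sums by the telescoping identity relating the $C_{ij}$-indexed product with shift $d_i$ to the $C_{ji}$-indexed sum with shift $d_j$. The paper states this telescoping step without the case analysis, so your write-up is simply a more explicit version of the same argument.
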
 
\begin{proof}
For the self-consistency we present the proof briefly.
The bilinear equation \eqref{gToda-tau-eq} is rewritten as
$$
  \frac{\partial}{\partial t} \log\frac{\tau_i(n)}{\tau_i(n+d_i)} = s_i(n).
$$
From this, it follows that
$$
  \frac{\partial}{\partial t} \log \prod_{k=1}^{-C_{ij}}
  \frac{\tau_i(n+k d_{ij})}{\tau_i(n+k d_{ij}+d_i)}
  = \sum_{k=1}^{-C_{ji}} s_j(n+kd_{ij}),
$$
and we obtain the claim by calculating $\frac{\partial}{\partial t} X^i_n$ using \eqref{X-s} and \eqref{s-tau}.  
\end{proof}

We call $\tau_i(n)$ the {\em $\tau$-function} for the lattice Toda field equation \eqref{eq-Toda-classical}.
We are going to relate the $\tau$-function $\tau_i(n)$ with the $A$-variables for the infinite quiver $Q(\mg)$.
Let $\C(\tau)$ be the rational functional field generated by $\tau_i(n)$ for 
$(i,n) \in S \times d \Z$.  
For a seed $(Q(\mg),\mathbf{X},\mathbf{A})$, recall the positive map $p$ \eqref{eq:positivemap}.

\begin{prop}\label{prop:Toda-A}
Let $\phi$ be a map from $\C(\tau)$ to $\C(\mathbf{A})$ given by
$\tau_i(n) \mapsto A^i_{n-d_i}$. Define a map $p_\tau : \C(\mathbf{X}) \to \C(\tau)$ given as a composition of \eqref{X-s} and \eqref{s-tau}.
Then the following diagram is commutative:
\begin{align*}
\xymatrix{
\C(\mathbf{X}) \ar[rd]_{p^\ast} \ar[r]^{p_\tau} & 
\C(\tau) \ar[d]^{\phi} 
\\
&  \C(\mathbf{A}) 
\\
}
\end{align*}
\end{prop}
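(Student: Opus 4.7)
The plan is to expand both compositions explicitly and match them monomial by monomial in the $A$-variables. Composing \eqref{X-s} with \eqref{s-tau} yields
\begin{equation*}
  p_\tau(X^i_n) \;=\; \frac{s_i(n)}{s_i(n+d_i)} \;=\; \frac{T_i(n)\,\tau_i(n+2d_i)}{T_i(n+d_i)\,\tau_i(n)}.
\end{equation*}
Applying $\phi$, which sends $\tau_j(m) \mapsto A^j_{m-d_j}$, gives
\begin{equation*}
  (\phi \circ p_\tau)(X^i_n) \;=\; \frac{A^i_{n+d_i}}{A^i_{n-d_i}} \cdot \frac{\phi(T_i(n))}{\phi(T_i(n+d_i))}.
\end{equation*}
The target $p^\ast(X^i_n) = \prod_{v^j_m} (A^j_m)^{\ve_{v^i_n v^j_m}}$ splits naturally into a ``same column'' part ($j=i$) and a ``different column'' part ($j\neq i$). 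By inspection of the quiver construction in \S 3.1, the only arrows inside the $v^i$-column are $v^i_{n-d_i}\to v^i_n\to v^i_{n+d_i}$ for every Dynkin type, so the same-column contribution is exactly $A^i_{n+d_i}/A^i_{n-d_i}$, matching the first factor above. Thus the proposition reduces to the identity
\begin{equation*}
  \frac{\phi(T_i(n))}{\phi(T_i(n+d_i))} \;=\; \prod_{j \neq i}\prod_m (A^j_m)^{\ve_{v^i_n v^j_m}}.
\end{equation*}

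Next I would verify this identity case by case. For the simply-laced types ($A_\ell, D_\ell, E_{6,7,8}$) one has $d_i=1$ and $d_{ij}=1$ throughout, so $T_i(n)=\prod_{j<i,\,C_{ij}=-1}\tau_j(n+1)\cdot\prod_{j>i,\,C_{ij}=-1}\tau_j(n)$, hence $\phi(T_i(n))=\prod_{j<i}A^j_n\cdot\prod_{j>i}A^j_{n-1}$ (restricted to neighbors in the Dynkin quiver). The ratio $\phi(T_i(n))/\phi(T_i(n+1))$ then yields the factors $A^j_n/A^j_{n+1}$ (for $j<i$ connected) and $A^j_{n-1}/A^j_n$ (for $j>i$ connected), which coincide term-by-term with the arrows at $v^i_n$ dictated by the rule ``if $\ve_{ij}=1$ then $v^i_n\to v^j_n$ and $v^j_{n+1}\to v^i_n$'' in the construction of $Q(\mg)$.

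For the non-simply-laced types ($B_\ell, C_\ell, F_4, G_2$) the verification is analogous but must be performed at the exceptional positions. Here the $d_{ij}=\min(d_i,d_j)$ shifts in $T_i(n)$ are precisely mirrored by the shorter/longer step sizes in the definitions of $Q(\mg)$. For instance, in $B_\ell$ at $i=\ell$ one has $T_\ell(n)=\tau_{\ell-1}(n+\tfrac12)\tau_{\ell-1}(n+1)$, so $\phi(T_\ell(n))/\phi(T_\ell(n+\tfrac12))=A^{\ell-1}_{n-1/2}/A^{\ell-1}_{n+1/2}$ after cancellation, matching the two arrows $v^\ell_n\to v^{\ell-1}_{n-1/2}$ and $v^{\ell-1}_{n+1/2}\to v^\ell_n$ in $Q(B_\ell)$; the same calculation at $i=\ell-1$ produces $A^\ell_{n-1/2}/A^\ell_{n+1/2}$, consistent with the two reverse arrows. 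Analogous checks at $i=\ell-1,\ell$ for $C_\ell$, at $i=2,3,4$ for $F_4$, and at $i=1,2$ for $G_2$, using the explicit quivers in Figures \ref{TQC4}, \ref{TQF4}, \ref{TQG2}, complete the argument.

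The main obstacle is the bookkeeping required at the short-long root interfaces in the non-simply-laced cases, where one must check that the several factors $\tau_j(n+k d_{ij})$ contributed by the bilinear identity \eqref{gToda-tau-eq} produce exactly the half-integer (or doubly-spaced) arrows of $Q(\mg)$ after cancellations between $\phi(T_i(n))$ and $\phi(T_i(n+d_i))$. No conceptual difficulty is involved, since both constructions are driven by the same pair $(\mathbf{C},\mathbf{D})$; however, a clean unified presentation may be hard to give without listing the exceptional vertices separately, as in the analogous treatment of $f_A(i,\gamma_i)$ in Theorem \ref{R-action}.
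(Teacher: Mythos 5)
Your proposal is correct and follows essentially the same route as the paper: compute $p_\tau(X^i_n)=\frac{T_i(n)\,\tau_i(n+2d_i)}{T_i(n+d_i)\,\tau_i(n)}$, apply $\phi$, and match the result against $p^\ast(X^i_n)$ column by column, with the non-simply-laced interfaces checked separately (the paper likewise tabulates the exceptional cases for $B_\ell$, $C_\ell$, $F_4$, $G_2$ and then verifies the coincidence case by case). The sample computations you carry out (simply-laced, and $i=\ell,\ell-1$ for $B_\ell$) agree with the paper's formulas.
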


\begin{proof}
We have $p_\tau(X^i_n)$ expressed as
\begin{align}
p_\tau(X^i_n) =
  \frac{\tau_i(n+2d_i)}{\tau_i(n)} \cdot   
  \begin{cases}
  \displaystyle{\prod_{j:j \lhd i} \frac{\tau_j(n+d_{j})}{\tau_j(n+d_{j}+d_i)}  
  \prod_{j:j \rhd i} \frac{\tau_j(n)}{\tau_j(n+d_i)}} & \text{almost cases},
  \\
  \displaystyle{\frac{\tau_{\ell-1}(n+\frac{1}{2})}{\tau_{\ell-1}(n+\frac{3}{2})}} & i =\ell \text{ for } B_\ell,
  \\
  \displaystyle{\frac{\tau_{\ell-2}(n+1) \tau_{\ell}(n)}{\tau_{\ell-2}(n+2) \tau_{\ell}(n+2)}} & i =\ell-1 \text{ for } C_\ell,
  \\
  \displaystyle{\frac{\tau_{2}f(n+\frac{1}{2}) \tau_4(n)}{\tau_{\ell-1}(n+\frac{3}{2}) \tau_{4}(n+\frac{1}{2})}} & i =3 \text{ for } F_4,
  \\
  \displaystyle{\frac{\tau_{2}(n)}{\tau_{2}(n+3)}} & i =1 \text{ for } G_2.
  \end{cases}
\end{align}
Here $j \lhd i$ is defined at Theorem \ref{R-action}.
It is easy to check $\phi \circ p_\tau(X^i_n) = p^\ast(X^i_n)$ case by case.
For example, in the case of $\mg$ of a simply-laced Dynkin diagram, we obtain
$$
  \phi \circ p_\tau(X^i_n) 
  = 
  \frac{A^i_{n+1}}{A^i_{n-1}} 
  \prod_{j:j \lhd i} \frac{A^j_n}{A^j_{n+1}}  
  \prod_{j:j \rhd i} \frac{A^j_{n-1}}{A^j_n}, 
$$ 
which coincides with $p^\ast(X^i_n)$. 
\end{proof}


\end{document}